\newtheorem{theorem}{Theorem}
\newtheorem{proposition}{Proposition}
\newtheorem{lemma}{Lemma}
\newtheorem{claim}{Claim}
\newtheorem{Remark}{Remark}
\newtheorem{notation}{Notation}
\newtheorem{definition}{Definition}
\title{A Gluing problem for a gauged hyperbolic PDE}
\author{Amirmasoud Geevechi}
\begin{document}
\maketitle
\newpage
{
\Large
\textit{Dedicated to MirHossein Mousavi}
}
\newpage
{
\Large
\textit{Supervised by Prof. Robert Jerrard}
}
\newpage
\begin{abstract}
    In this thesis, we study the dynamic Abelian Higgs model in dimension $3$ at the critical coupling. This is a system of partial differential equations which enjoys local symmetries known as gauge transformations. The stationary finite energy solutions to these equations in dimension $2$ have been classified by Jaffe and Taubes in 1980, the so called vortex configurations. In 1992, Stuart has proved that one can construct solutions near the critical coupling regime in dimension $1+2$ whose dynamics are approximated by a finite dimension Hamiltonian system to the moduli space which reduces to the geodesic flow at the critical coupling.
\\
\\
In this project, we study how one can glue the vortex configurations to find dynamic solutions in dimension $3$. More precisely, we prove that one can construct solutions which are approximated by wave maps to the moduli space of vortex configurations. The proof involves an ansatz to construct approximate solutions and then add perturbations. In the ansatz, we go through an iterative mechanism to reduce the error of the approximate solution so that it is prepared to be perturbed to find an honest solution.  
\\
\\
In both steps of the project, the ansatz and perturbation, the choice of gauge is crucial. It is noteworthy that the choice of gauge is different in these two steps. We proceed by a choice for gauge, simplify the equations and then we have to decompose the quantities into two components, the zero modes (the tangent vectors in the moduli space) and the orthogonal complement to zero modes.
According to the Higgs mechanism, stability is available for the components orthogonal to zero modes. In this regard, in the ansatz, the dynamics of zero modes are designed in such a way that some orthogonality condition is satisfied. In the perturbation part, the dynamics of zero modes is forced by the evolution of orthogonal components. Obtaining desired estimates for the tangential part requires taking advantage of explicit structure of equations, rather than the usual estimates. Also, the number of iterations in the ansatz should be high enough so that the desired estimates hold for the tangential part. 
\\
\\
This thesis has been supervised by Prof. Robert Jerrard from the department of mathematics at the University of Toronto.
\end{abstract}
\tableofcontents
\chapter{Abelian Higgs Model}
\label{ch1}
\section{Introduction}
\label{ch1.sec1}
Abelian Higgs model is a physical model which can be regarded as part of the Higgs sector of the standard model in particle physics. In particle physics, there is a well-known concept of wave-particle duality by which studying the dynamics of particles can be replaced by the study of quantum fields defined over a space-time. In this regard, Abelian Higgs model describes the interaction of the  so called Higgs field and the electromagnetic field. 
\\
Consider the spacetime ${\mathbb{R}^{1+3}}$. The Higgs field is a complex function
\[
\Phi:\mathbb{R}^{1+3}\to \mathbb{C}
\]
and the electromagentic potential is a real one-form
\[
A=A_{0}dt+A_1dx^1+A_2dx^2+A_3dz
\]
where $t$ is the time component. The components of the electromganetic field are given by
\begin{equation}
\label{eq6}
\mathcal{F}_{_{\alpha\beta}}=\partial_{_{\alpha}}A_{_{\beta}}-\partial_{_{\beta}}A_{_{\alpha}}
\end{equation}
where
\begin{equation}
\label{eq4}
\partial_0=\partial_t\hspace{.3cm},\hspace{.3cm}\partial_1=\partial_{_{x_1}}\hspace{.3cm},\hspace{.3cm}\partial_2=\partial_{_{x_2}}\hspace{.3cm},\hspace{.3cm}\partial_3=\partial_z
\end{equation}
The Abelian Higgs model with the coupling parameter $\lambda$ is described by the following equations:
\begin{align}
&D_0D_0\Phi-\sum_{j=1}^3D_jD_j\Phi+\frac{\lambda}{2}\big(|\Phi|^2-1\big)\Phi=0\label{eq1.new1}\\
&\Big(\partial_0\mathcal{F}_{0j}-\sum_{k=1}^3\partial_k\mathcal{F}_{kj}\Big)-\big(i\Phi,D_j\Phi\big)=0 \hspace{.4cm}\big(\text{for $j=0,1,2,3$}\big)\label{eq3}
\end{align}
where
\begin{equation}
\label{eq5}
D_{\mu}=\partial_{\mu}-iA_{\mu}
\end{equation}
for $\mu=0,1,2,3$ and
\begin{equation}
\label{eq7}
(a,b)=\Re(a\bar{b})
\end{equation}
for any two complex numbers $a,b$. These notations will be used throughout this thesis. 
\\
The above equations correspond to the finding the critical points of the functional:
\begin{equation}
\label{eq7.01}
E=\int_{_{\mathbb{R}^{1+3}}}\Big(-|D_0\Phi|^2+\sum_{j=1}^3|D_j\Phi|^2\Big)+\frac{1}{2}\sum_{\alpha,\beta}\big(\mathcal{F}_{_{\alpha\beta}}\big)^2+\frac{\lambda}{4}\big(|\Phi|^2-1\big)^2
\end{equation}
The Abelian Higgs model has a crucial property which is invariance under some local symmetries called gauge transformations. If $(\phi,A)$ is a solution for the above equations, then for any smooth function $\chi$, $(\tilde{\phi},\tilde{A})$ obtained by
\begin{equation}
\label{eq7.1.new}
\begin{split}
&\tilde{\Phi}(t,x)=\Phi(t,x)e^{i\chi(t,x)}\\
&\tilde{A}=A+d\chi
\end{split}
\end{equation}
will be another solution. The transformation described by \eqref{eq7.1.new} is called a gauge transformation in the literature of mathematical physics.
\\
One can consider similar equations for the spacetime $\mathbb{R}^{1+2}$. Also, by removing the time variable, one obtains the static two-dimensional Abelian Higgs model described by the following equations:
\begin{equation}
\label{eq8.new}
\begin{split}
&-\sum_{j=1}^2D_jD_j\phi+\frac{\lambda}{2}\big(|\phi|^2-1\big)\phi=0\\
&\sum_{k=1}^2\partial_k\mathcal{F}_{kj}+\big(i\phi,D_j\phi\big)=0 \hspace{.4cm}\big(\text{for $j=1,2$}\big)
\end{split}
\end{equation}
From now on, we use the varibles $(\phi,\alpha)$ for the static two-dimesnial Abelian Higgs model with
\[
\alpha=\alpha_1dx^1+\alpha_2dx^2
\] 
The equations \eqref{eq8.new} appear in the theory of superconductivity.
\\
The finite energy stationary solutions to the Abelian Higgs model in $2D$ in the so called critical coupling $\lambda=1$ have been classified by the work of Jaffe and Taubes in 1990 \cite{JT80}. They proved that every finite energy solution to these equations can be characterized by the zero set of the Higgs field, up to a gauge field. More precisely, they proved that given any finite set $S=\{z_1,z_2,\cdots,z_N\}\subset$ with possible multiplicities, there exists a unique smooth solution $(\phi,\alpha)$ up to gauge transformations such that $\phi$ vanishes exactly on $S$. The points $\{z_j\}_{j=1}^N$ are called vortex centers. These solutions are called vortex configurations. In section \ref{sec1.2}, we give a detailed description of the construction of vortex configurations.
\\
The question that we address in this thesis is a gluing problem; how can one use the vortex solutions to find dynamics solutions in $\mathbb{R}^{1+2}$ or $\mathbb{R}^{1+3}$. This problem has been studied in dimension $2$ by Stuart in \cite{Stu92} in 1994 and revisited later on by Palvelev in \cite{Pav11} in 2008. In this thesis, we answer the question in dimension $\mathbb{R}^{1+3}$.
\\
To answer this question, the space of whole $N$-vortex configurations has been considered as a smooth Riemannain manifold named the moduli space $M_N$. Then, in \cite{Stu92}, it has been proved that if $(q,p)=(q_{\mu},p_{\mu})_{\mu}:[0,T)\to (TM_2)^{*}$ satisfies a Hamiltonian system:
\begin{equation}
\label{hseq1}
\begin{split}
\frac{dp_{\mu}}{d\tau}=-\frac{\partial H_{\pm}}{\partial q_{\mu}}\hspace{.3cm},\hspace{.3cm}\frac{dq_{\mu}}{d\tau}=\frac{\partial H_{\pm}}{\partial p_{\mu}}
\end{split}
\end{equation}
for some $H_{\pm}$, then
for any $\epsilon>0$ small enough, corresponding to the coupling constant $\lambda=1\pm \epsilon^2$, one can construct solutions to the AHM equations of the form:
\begin{equation}
\label{in.in.eq1}
\begin{pmatrix}
\Phi
\\
A_1,A_2
\\
A_0
\end{pmatrix}(x,t)
=\begin{pmatrix}
\phi((x;q(\epsilon t))e^{i\Sigma}
\\
\alpha(x;q(\epsilon t))+d\Sigma
\\
0
\end{pmatrix}+ O(\epsilon)
\end{equation}
for some function $\Sigma$, valid over an interval of the form $[0,\frac{T^{\prime}}{\epsilon}^2)$, where the perturbation is measured in some Sobolev norm.
\\
In \cite{Pav11}, by similar arguments, Palvelev has proved that if $\lambda=1$, then one can consider a geodesic $q:[0,T]\to M_N$ and construct solutions to the AHM of the form \eqref{in.in.eq1}. We have given a detailed description of these two results and the main ideas of the proofs in section \ref{sec1.3}.
\\
In this thesis, we prove that if $q:[0,T]\times \mathbb{R}\to M_N$ is a smooth wave map, then for any $\epsilon>0$ small enough, one can construct solutions to the AHM close to $q$ in the sense that:
\begin{equation}
\label{in.in.eq2}
\begin{pmatrix}
\Phi
\\
A_1,A_2
\\
A_0,A_3
\end{pmatrix}(t,x,z)
=\begin{pmatrix}
\phi((x;q(\epsilon t,\epsilon z))
\\
\alpha(x;q(\epsilon t, \epsilon z))
\\
0
\end{pmatrix}+ 
\begin{pmatrix}
O(\epsilon)
\\
O(\epsilon)
\\
O(\epsilon ^{\frac{1}{2}})
\end{pmatrix}
\end{equation}
where the perturbation is measured in some Sobolev norm and the result is valid over a time interval of the form $[0,\frac{T^{\prime}}{\epsilon})$. This result is proved in chapter \ref{ch2}.
\\
Amongst the many ideas used in this construction, I would like to mention a heuristic on how the gauge symmetry contributes to the the aforementioned gluing problem.
\subsection{Organization of the Thesis}
The rest of chapter \ref{ch1} is devoted to a detailed description of the construction of vortex configurations based on \cite{JT80} in section \ref{sec1.2}, explaining the main ideas of the work of Stuart \cite{Stu92} and Palvelev \cite{Pav08} in \ref{sec1.3} and finally the physical implication of these results in section \ref{ch1.4}.
\\
In chapter \ref{ch2}, we give a proof of the aforementioned main result of the thesis on the approximation of the AHM equations by the wave maps to the moduli space of vortex configurations. Finally, in the appendix, we prove analytical results on the elliptic equations used in the chapter \ref{ch2}.
\section{Static 2D Abelian Higgs Model}
\label{sec1.2}
In this section, I review some well-known literature on the time independent Abelian Higgs model in dimension $2$, based on \cite{JT80} The stationary $2D$ equations are of independent interest, however their almost explicit structure is of great usage in the dynamic problem in chapter \ref{ch2}. In subsection \ref{cvc}, I go over their construction. Following \cite{Stu92},  in subsection \ref{msvg}, I will describe the moduli space of vortex configuration as a smooth Riemannian manifold, and in subsection \ref{rmvg}, I will explain te smooth Riemannian manifold structure on the moduli space.
\subsection{Construction of vortex configurations}
\label{cvc}
Consider the system of equations \eqref{eq8.new}. These equations correspond to the Lagrangian:
\begin{equation}
\label{eq12.new}
\mathcal{L}=\int_{_{\mathbb{R}^2}}\Bigg(\sum_{j=1}^2|D_j\phi|^2+\mathcal{F}_{_{12}}^2+\frac{\lambda}{4}\big(|\phi|^2-1\big)^2\Bigg)
\end{equation}
By some integration by parts,
\begin{equation}
\label{eq13.new.1}
\begin{split}
\mathcal{L}&=\int_{_{\mathbb{R}^2}}\Big(\big(\partial_1\phi_1+\alpha_1\phi_2\big)\mp\big(\partial_2\phi_2-\alpha_2\phi_1\big)\Big)^2\\
&+\int_{_{\mathbb{R}^2}}\Big(\big(\partial_2\phi_1+\alpha_2\phi_2\big)\pm\big(\partial_1\phi_2-\alpha_1\phi_1\big)\Big)^2\\
&+\int_{_{\mathbb{R}^2}}\Big[\mathcal{F}_{_{12}}\pm\big(|\phi|^2-1\big)\Big]^2\pm\int_{\mathbb{R}^2}\mathcal{F}_{12}\\
&+\frac{(\lambda-1)}{4}\big(1-|\phi|^2\big)^2
\end{split}
\end{equation}
where $\phi_1,\phi_2$ denote the real and imaginary parts of $\phi$.
\\
Suppose that $\lambda=1$. Then, according to \eqref{eq12.new}, finite energy solutions satisfy the conditions:
\begin{equation}
\label{eq12.1}
\begin{split}
&|\phi|\to 1\\
&|D_j\phi|\to 0 \hspace{.4cm}j=1,2
\end{split}
\end{equation} 
at infinity. The behavior of $\phi$ at infinity can be described by
\begin{equation}
\label{eq12.2}
\phi_{_{|x|\to \infty}}:S^1\to S^1 
\end{equation}
One can assign a winding number to this map which coincides with the quantity:
\begin{equation}
\label{eq12.3}
K=\frac{1}{2\pi}\int_{\mathbb{R}^2}\mathcal{F}_{_{12}}d^2x
\end{equation}
When $\lambda=1$, one can fix $K$ and consider the minmization problem for the Lagrnagian $\mathcal{L}$ and obtain the following equations:
\begin{equation}
\label{eq14}
\begin{split}
&\big(\partial_1\phi_1+\alpha_1\phi_2\big)\mp\big(\partial_2\phi_2-\alpha_2\phi_1\big)=0\\
&\big(\partial_2\phi_1+\alpha_2\phi_2\big)\pm\big(\partial_1\phi_2-\alpha_1\phi_1\big)=0\\
&\mathcal{F}_{12}\pm\frac{1}{2}\big(|\phi|^2-1\big)=0
\end{split}
\end{equation}
One can also write these equations in the form
\begin{equation}
\label{eq14.1}
\begin{split}
&\big(D_1\pm i D_2\big)\phi=0\\
&\mathcal{F}_{12}\pm\frac{1}{2}\big(|\phi|^2-1\big)=0
\end{split}
\end{equation}
which are called the Bolgomony equations. The sign in the above equations coincides with the sign of $K$. The first solutions found for these equations were the rotationally symmetric ones \cite{NiOl}. As mentioned in \cite{Sa}, if the winding number is $n$, these solutions take the form
\begin{equation}
\label{eq14.2}
\begin{split}
&\phi=e^{i \pi n}\rho(r)\\
&A_r=0\hspace{.3cm},\hspace{.3cm}A_{_{\theta}}=na(r)
\end{split}
\end{equation}
in polar coordniates where $\rho,a$ satisfy the equations
\begin{equation}
\label{eq14.3}
\begin{split}
&r\frac{d\rho}{dr}-n(1-a)\rho=0\\
&\frac{2n}{r}\frac{da}{dr}+(\rho^2-1)=0
\end{split}
\end{equation}
with the boundary conditions
\begin{equation}
\label{eq14.4}
\rho(0)=a(0)=0\hspace{.3cm},\hspace{.3cm}\rho(\infty)=a(\infty)=1
\end{equation}
The function $\rho$ behaves as below:
\begin{equation}
\label{eq14.5}
\begin{split}
&\rho(r)\sim Ar^n\hspace{.5cm}r\to 0\\
&\rho(r)\sim  1-BK_0(r)\hspace{.5cm}r\to \infty
\end{split}
\end{equation}
where $BK_0$ is the zeroth order modified Bessel function.
\\
\\
To study Bogomolny equations, one can linearize it around a given static 2D solution. Due to the gauge invariance property of the equations, onc can accompany the linearized equations with a gauge condition. Corresponding to the gauge transformations \eqref{eq7.1.new}, one can define the infinitesimal gauge transformations
\begin{equation}
\label{eq24}
\big(\tilde{\phi},\tilde{a}\big)\to \big(\tilde{\phi}+i\phi\chi,\tilde{a}+d\chi\big)
\end{equation}
Now, Consider a vector $\big(\tilde{\phi},\tilde{a}\big)$. We say that it satisfies the gauge orthogonality condition with respect to the vortex solution $(\phi,\alpha)$, if
\begin{equation}
\label{eq25.new.ch1}
\nabla.\tilde{a}-\big(i\phi,
\tilde{\phi}\big)=0
\end{equation}
The linearized Bogomolny equations accompanied with the gauge orthogonality condition can be written in the form:
\begin{equation}
\label{eq14.6}
\mathcal{D}_{_{(\phi,\alpha)}}
\begin{pmatrix}
\tilde{\phi}\\
\tilde{a}
\end{pmatrix}=
\begin{pmatrix}
\bar{\partial}\tilde{\phi}-i\bar{a}\tilde{\phi}-i\bar{\tilde{a}}\phi\\
\partial \bar{\tilde{a}}+\frac{i}{4}\bar{\phi}\tilde{\phi} 
\end{pmatrix}=\begin{pmatrix}
0\\
0
\end{pmatrix}
\end{equation}
where
\begin{equation}
\label{eq14.7}
\begin{split}
&a=\frac{1}{2}\big(\alpha_1-i\alpha_2\big)\\
&\tilde{a}=\frac{1}{2}\big(\tilde{a}_1-i\tilde{a}_2\big)\\
&\partial=\frac{1}{2}\big(\partial_1-i\partial_2\big)
\end{split}
\end{equation}
In \cite{Wein}, it has been shown that around a solution $(\phi,\alpha)$ with the winding numnber $n$, there are $2n$ linearly independent solutions of 
the linearized equations \eqref{eq14.6}. These solutions are called zero modes. This suggests that there might be a $2n$-parameter family of solutions with the winding number $n$. In \cite{JT80}, such solutions have been constructed. They are called "vortices" and "anti-vortices".
\begin{theorem}
\label{thm1.new}
Given an integer $N\ge 0$ and points $z_1,z_2,\cdots,z_N$ in the complex plane. Suppose that $z_j$ repeats $n_j$ times in the sequence. There exists a smooth finite energy solution $(\phi,\alpha)$ to equations \eqref{eq14.1} with the positive choice for sign, satisfying the following properties:
\begin{enumerate}
\item
The zeros of $\phi$ are the set of points $z_1,z_2,\cdots,z_N$ and the behavior of $\phi$ around any $z_j$ is:
\begin{equation}
\label{eq9}
\phi(z)\sim c_j\big(z-z_j\big)^{n_j}
\end{equation}
for some $c_j\neq 0$.
\item
\begin{equation}
\label{eq10}
|D_1\phi|+|D_2\phi|\le C\big(1-|\phi|\big) \le C \exp\big(-(1-\delta)|z|\big)
\end{equation}
for any $\delta$ where $C=C(\delta)$ is a constant.
\item
\begin{equation}
\label{eq11}
N=\frac{1}{2\pi}\int_{_{\mathbb{R}^2}}\mathcal{F}_{12}
\end{equation}
\end{enumerate}
\end{theorem}
In the above theorem, the points $\big(z_1,z_2,\cdots,z_N\big)$ are called the vortex centers and solutions are called vortices or $N$-vortex solutions. If $(\phi,\alpha)$ is an $N$-vortex solution, then $(\bar{\phi},-a)$ is an $N$-antivortex solution for equation \eqref{eq14.6}, but with the negative choice for sign.
\\
\\
To prove the above theorem, one can notice that if $u$ solves the equation
\begin{equation}
\label{eq15.new.ch1}
-\Delta u+e^{u}-1=-4\pi\sum_{_{k=1}}^N\delta(z-z_k)
\end{equation}
with
\begin{equation}
\label{eq16}
\lim _{_{|z|\to\infty}}u=0
\end{equation}
then the followings will solve \eqref{eq14.1}
\begin{equation}
\label{eq17.new}
\begin{split}
&\phi=\exp \frac{1}{2}\big[u+i\Theta\big]\\
&\alpha_1=\frac{1}{2}\big(\partial_2 u+\partial_1 \Theta\big)\\
&\alpha_2=\frac{-1}{2}\big(\partial_1u-\partial_2\Theta\big)
\end{split}
\end{equation}
where
\begin{equation}
\label{eq18}
\Theta=2\sum_{i=1}^N \arg(z-z_{i})
\end{equation}
To solve \eqref{eq15.new.ch1}, one can consider the functions
\begin{equation}
\label{eq19.new}
\begin{split}
&u_0=-\sum_{k=1}^N\ln\Big(1+\mu|z-z_k|^{-2}\Big)\\
&g_0=4\sum_{k=1}^N\frac{\mu}{\big(|z-z_k|^2+\mu\big)^2}
\end{split}
\end{equation}
for some $\mu>4N$. Then, regarding $u_0$ as a distribution, we have:
\begin{equation}
\label{eq20.new}
\Delta u_0=-4\sum_{k=1}^N\frac{\mu}{\big(|z-z_k|^2+\mu\big)^2}+4\pi\sum_{k=1}^N\delta\big(z-z_k\big)
\end{equation}
Therefore, by setting
\[
v=u-u_0
\]
\eqref{eq20.new} is equivalent to
\begin{equation}
\label{eq21.new.ch1}
\Delta v=e^{v+u_0}+(g_0-1)
\end{equation}
with 
\begin{equation}
\label{eq22}
\lim_{_{|z|\to \infty}}|v|=0
\end{equation}
Equation \eqref{eq21.new.ch1} is equivalent to finding the critical point of the functional
\begin{equation}
\label{eq23}
\begin{split}
&T:H^1\to \mathbb{R}\\
&T(v)=\int_{_{\mathbb{R}^2}}\Big[\frac{1}{2}|\nabla v|^2+v(g_0-1)+e^{u_0}(e^v-1)\Big]
\end{split}
\end{equation}
It has been proved in \cite{JT80} that this functional has a unique minimizer and therefore the Bolgomony equations have the prescribed solutions.
\subsection{Moduli Space of vortex configurations}
\label{msvg}
 In \cite{Stu92}, the moduli space $M_N$, the space of all $N$-vortex solutions modulo gauge equivalence, provided by theorem \ref{thm1.new}, has been discussed. $M_N$ is homemorphic to $\mathbb{R}^{2N}$. This is because corresponding to a vortex solution $(\phi,\alpha)$ with the vortex centers
\[
\Big(z_1,z_2,\cdots,z_N\Big)
\]
one can consider the coefficients of the polynomial
\[
p(z)=(z-z_1)(z-z_2)\cdots (z-z_N)
\]
and notice that this defines a bijection between $\mathbb{R}^{2N}$ and $M_N$. 
\\
\\
From now on, we assume that
\[
z_j=z_{_{j,1}}+iz_{_{j,2}}
\]
for real numbers $z_{_{j,1}},z_{_{j,2}}$. Also, we write
\[
p(z)=(z-z_1)(z-z_2)\cdots (z-z_N)=S_0+S_1z+S_2z^2+\cdots+z^N
\]
and assume that
\[
S_j=S_{j,1}+iS_{j,2}
\]
for real numbers $S_{j,1},S_{j,2}$. Also, let
\[
S=\big(S_0,S_1,\cdots,S_{N-1}\big)
\]
\begin{theorem}{\cite{Pav08}}
The vortex solution $(\phi,\alpha)$ depends smoothly on  $S$; for any $z\in \mathbb{C}$, the function $(\phi,\alpha)(z;S)$ depends smoothly on $z$.
\end{theorem}
To justify this theorem, one can note that the functions $u_0$ and $g_0$ in \eqref{eq19.new} depend smoothly on $S$ and therefore, it is expectable that $v$ behaves smoothly with respect to $S$. To be more precise, one can use \eqref{eq17.new} to write
\[
\phi(z)=p(z)f(z)
\]
where
\begin{equation}
\label{eq23.01}
p(z)=(z-z_1)(z-z_2)\cdots(z-z_N)
\end{equation}
and $f(z)>0$. Let
\[
w=2\ln f+ \ln\big(1+|p|^2\big)
\]
Then,
\begin{equation}
\label{eq23.02}
w=v+\ln \big(1+|p|^2\big)-\sum_{k=1}^N \ln\big(\mu+|z-z_k|^2\big)
\end{equation}
and according to equation \eqref{eq21.new.ch1},
\begin{equation}
\label{eq23.03}
\Delta w=\frac{|p|^2}{1+|p|^2}e^w-1+\Delta \ln \big(1+|p|^2\big)
\end{equation}
In \cite{JT80}, it has been proved that $v\in H^2$. Therefore, $w\in H^2$. To prove that $w$ behaves smoothly with respect to $S$, first note that for any $h\in H^2(\mathbb{R}^2)$,
\[
(e^h-1)\in L^2(\mathbb{R}^2)
\]
and the map
\begin{equation}
\label{eq23.04}
\begin{split}
&T:H^2(\mathbb{R}^2)\to L^2(\mathbb{R}^2)\\
&T(h)=e^h-1
\end{split}
\end{equation}
is smooth. This can be proved by considering the Taylor explansion of $e^h$ and using the Sobolev embedding theorem. This implies that the map
\begin{equation}
\label{eq23.05}
\begin{split}
&F:H^2(\mathbb{R}^2)\times \mathbb{C}^N \to L^2 \big(\mathbb{R}^2\big)\\
&F(v,S)=-\Delta v+\frac{|p(S)|^2}{1+|p(S)|^2}e^v-1+ \Delta \ln \big(1+|p(S)|^2\big)
\end{split}
\end{equation}
is smooth. Now, suppose that $w(S)$ denotes the solution to equation \eqref{eq23.03}. In \cite{Pav08}, it has been shown that the map 
\begin{equation}
\label{eq23.06}
\begin{split}
&I:H^2(\mathbb{R}^2)\to L^2(\mathbb{R}^2)\\
&I(h)=F^{\prime}_v\big(w(S);S\big)h
\end{split}
\end{equation}
is invertible for any $S\in \mathbb{C}^N$, where $F^{\prime}_v$ denotes differentiation with respect to the first input variable of $F$. This together with an implicit function theorem for Banach spaces implies that $w$ and henceforth $(\phi,\alpha)$ behave smoothly with respect to $S$. Furthermore, one can prove the following:
\begin{proposition}
\label{p1.new}
For any multi-index $r$ and every compact subset $K\subset M_N$, there exist numbers $A,B,R>0$ such that the function $u=\ln|\phi|^2$ corresponding to a point $p\in K$ has the property 
\begin{equation}
\label{eq23.1}
|D^r u(x)|\le Ae^{-B|x|}
\end{equation} 
for every $x\in\mathbb{R}^2$ with $|x|\ge R$, where $D$ denotes differentiation with respect to $x_1,x_2$ and $S_{j,1},S_{j,2}$ with $j\in\{0,1,2,\cdots,N\}$.
\end{proposition}
\subsection{A Riemannian metric on the moduli space of vortex configurations}
\label{rmvg}
One can define a Riemannian metric on $M_N$. Consider a smooth curve
\[
\gamma:[0,1]\to M_N
\]
In \cite{Stu92}, the procedure to define $||\gamma^{\prime}(0)||$ is as follows:
\\
\\
Suppose that
\[
(\phi,\alpha)=\big(\phi(\gamma(0)),\alpha(\gamma(0))\big)
\]
First, one finds a smooth function $\chi$ such that the vector
\begin{equation}
\label{eq26.new}
\big(\tilde{\phi},\tilde{a}\big)=\Big(\frac{d\phi}{dt}(0),\frac{d\alpha}{dt}(0)\Big)+\big(i\phi\chi,d\chi)                                                        
\end{equation}
satisfies the gauge-orthogonality condition with respect to $(\phi,\alpha)$. Following \eqref{eq25.new.ch1}, this leads to the equation
\begin{equation}
\label{eq26.1.new}
\Delta \chi-|\phi|^2\chi=-\frac{1}{2}\big(\partial_t \Theta\big)|\phi|^2
\end{equation}
To construct a solution for this equation, one writes
\begin{equation}
\label{eq27}
\chi=\frac{\rho}{2}\big(\partial_t \Theta\big)+\zeta
\end{equation}
where $\rho$ is a smooth cut-off function which is equal to $1$ if $|x|$ is large enough and vanishes inside a disk containing the zeros of $\phi$.
Then, $\zeta$ satisfies:
\begin{equation}
\label{eq28}
\Delta\zeta-|\phi|^2\zeta=g
\end{equation}
where $g$ is a smooth and compactly supported function. According to lemma \ref{al1} in the appendix, this equation has a smooth solution $\zeta$ which decays exponentially to zero at infinity. In this way, we find a solution $\chi$ for the equation \eqref{eq26.1.new}. 
According to proposition \ref{p1.new}, the function $u=\ln|\phi|^2$ has exponential decay. This implies that the vector $(\tilde{\phi},\tilde{a})$ defined by \eqref{eq26.new} belongs to $L^2$ and one defines
\begin{equation}
\label{eq29}
||\gamma^{\prime}(0)||=||(\tilde{\phi},\tilde{a})||_{_{L^2}}
\end{equation}
Using the procedure above, one can find the vectors
\[
n_{j,\alpha}=\Big(\frac{\partial \phi}{\partial S_{j,\alpha}}+i\phi \chi_{_{j,\alpha}},\frac{\partial a}{\partial S_{j,\alpha}}+d\chi_{_{j,\alpha}}\Big)
\]
which are gauge-orthogonal with respect to $\big(\phi,\alpha\big)$, for $j=1,2,\cdots,N$ and $\alpha=1,2$. These vectors are called the zero modes. The complex and real parts of $n_{\mu}$ are denoted by $n_{_{\mu,\phi}}$ and $n_{_{\mu,a}}$, respectively. Using proposition \ref{p1.new}, one obtains the following:
\begin{proposition}
\label{p2}
Suppose that $K$ is a compact subset of $M_N$. For every multi-index $r$, there exists numbers $A,\delta>0$ such that for every $p\in M_N$ and every $x\in \mathbb{R}^2$,
\begin{equation}
\label{eq30}
|D^rn_{_{j,\alpha}}(p)(x)|\le Ae^{-\delta|x|}
\end{equation}
where $D$ denotes differentiation with respect to $x_1$ or $x_2$ or $S_{_{j,1}}$ or $S_{_{j,2}}$ with $j\in \{0,1,2,\cdots,N\}$.
\end{proposition}
One can also obtain the following estimates by looking at the construction of the vortex solutions and proposition \ref{p1.new}
\begin{proposition}
\label{p30.1}
Suppose that $s$ is a multi-index and $D$ denotes differentiation with respect to $x_1$ or $x_2$ or $S_{_{j,1}}$ or $S_{_{j,2}}$ with $j\in \{0,1,2,\cdots,N\}$. Then, for $p\in M_N$, there exists $A,C,\delta,\epsilon>0$ such that if
\[
|q-p|<\epsilon
\]
then
\begin{equation}
    \label{eq32.new}
    \begin{split}
    &|D^s\phi(z;q)|< C(|z|+1)^{-|s|}\\
    &|D^s\alpha_j(z;q)|< C(|z|+1)^{-|s|-1}\hspace{.2cm}j=1,2\\
    &|D^s(|\phi|^2-1)|(z;q)<Ae^{-\delta|z|}\\
    &|D^s(D_{\alpha_j}\phi)|(z;q)<Ae^{-\delta|z|}
    \end{split}
\end{equation}
\end{proposition}
\section{Dynamic 2D Abelian Higgs Model}
\label{sec1.3}
The vortex configurations are static solutions to the Abelian Higgs Model. In \cite{Stu92} and \cite{Pav11}, the authors have constructed solutions for the dynamic Abelian Higgs model in $\mathbb{R}^{1+2}$ by using vortices, when $\lambda$ is close enough to $1$. In this section, we go over these the main ideas of these papers and mention some of the calcualutions used there.
\\
The general idea is to consider a path 
\[
q:[0,T)\to M_N
\]
and for small enough $\epsilon>0$, consider the ansatz
\begin{equation}
\label{eq31.new}
\begin{split}
&\Phi(t,x)=\phi(x;q(\epsilon t))e^{i\Sigma}+\epsilon^2\tilde{\phi}=\varphi\big(x, t\big)+\epsilon^2\tilde{\phi}\\
&A_0=\epsilon^3\tilde{a}_0\\
&A_i=\alpha_i(x;q(\epsilon t))+\partial_i \Sigma+\epsilon^2 \tilde{a}_i=a_i\big(x,t\big)+\epsilon^2 \tilde{a}_i \hspace{.5cm}i=1,2
\end{split}
\end{equation}
to find a solution for the Abelian Higgs Model such that
\[
||\big(\tilde{\phi},\tilde{a}\big)||_{_{H^3}}+||\big(\tilde{\phi}_t,\tilde{a}_t\big)||_{_{H^2}}
\]
remains bounded on a time interval of length $O(\frac{1}{\epsilon})$.
\\
\\
From now one, the input parameter of the path $q$ is denoted by $\tau$ and we will assume $\tau=\epsilon t$.
\\
\\
The gauge function $\Sigma$ is chosen such that the vector $\big(\frac{\partial \varphi}{\partial \tau},\frac{\partial a}{\partial \tau}\big)$ satisfies the gauge orthogonality condition.
\\
\begin{itemize}
\item 
In \cite{Stu92}, the following result has been proved:
\begin{theorem}
\label{thm1}
Consider the initial value problem for the Abelian Higgs model with 
\[
\lambda=1+\l\epsilon^2
\]
where $l=\pm 1$. Suppose that the initial data is close to a two vortex $\big(\phi(q(0)),\alpha(q(0))\big)$ in the sense that:
\begin{equation}
\label{t1.e1}
\begin{split}
&A(0,x)=\alpha\big(x;q(0)\big)+\epsilon^2\tilde{a}(0)\\
&\Phi(o,x)=\phi\big(x;q(0)\big)+\epsilon^2\tilde{\phi}(0)\\
&A_t(0,x)=\epsilon\sum_{_{\mu}}q_{_{\mu}}^{\prime}(0)n_{_{\mu,a}}+\epsilon^2\tilde{a}_t(0)\\
&\Phi_t\big(0,x\big)=\epsilon \sum_{\mu} q_{\mu}^{\prime}(0)n_{\mu,\phi}+\epsilon^2\tilde{\phi}_t(0)
\end{split}
\end{equation}
where $\big(\tilde{\phi}(0),\tilde{a}(0)\big)$ satisfy the following conditions:
\begin{align}
&\big(n_{_{\mu}},(\tilde{\phi}(0),\tilde{a}(0)\big)=\frac{d}{dt}\Big{|}_{_{t=0}}\big(n_{\mu},(\tilde{\phi},\tilde{a})\big)=0\label{t1.e2}\\
&\nabla.\tilde{a}(0,.)-\big(i\phi(0;q(0)),\tilde{\phi}(0,.)\big)=0\label{t1.e3}
\end{align}
Then, there exists $K$ such that if 
\[
\Big{|}\big(\tilde{\phi}(0),\tilde{a}(0)\big)\Big{|}_{_{H^3}}+\Big{|}\big(\tilde{\phi}_t(0),\tilde{a}_t(0)\big)\Big{|}_{_{H^2}}\le K
\]
then, for sufficiently small $\epsilon$, there exists a time $T_{*}=O\big(\frac{1}{\epsilon}\big)$ such that on the interval $[0,T_{*}]$, there exists a solution of the form
\begin{equation}
\label{t1.e3.new}
\begin{split}
&\Phi=\phi\big(x;q(t)\big)e^{i\sigma}+\epsilon^2\tilde{\phi}\\
&A=\alpha\big(x;q(t)\big)+d\sigma+\epsilon^2\tilde{a}(t,x)
\end{split}
\end{equation}
with
\begin{equation}
\label{t1.e4}f
\begin{split}
&q_{_{\mu}}(t)=q_{_{\mu}}^{0}(t)+\epsilon \tilde{q}_{_{\mu}}(t)\\
&p_{_{\mu}}(t)=p_{_{\mu}}^{0}(t)+\epsilon \tilde{p}_{_{\mu}}(t)
\end{split}
\end{equation}
where $p^0(\tau),q^0(\tau)$ are solutions of the Hamiltonian system
\begin{equation}
\label{t1.e5}
\frac{dp_i}{d\tau}=-\frac{\partial H}{\partial q_i}\hspace{.3cm},\hspace{.3cm}\frac{dq_i}{d\tau}=\frac{\partial H}{\partial p_i}
\end{equation}
where $H$ is defined by 
\begin{equation}
\label{eq35.5}
H(p,q)=\frac{1}{2}g^{*}(p,p)+V(q)
\end{equation}
and $g^{*}$ is the metric dual to the metric $g:T{{M_2}}\to \mathbb{R}$ and the potential $V$ is defined by
\begin{equation}
\label{eq35.7}
V(q)=\frac{l}{8}\int_{\mathbb{R}^2}\big(1-|\phi|^2\big)^2
\end{equation}
and the initial conditions are
\begin{equation}
\label{t1.e6}
q_{\mu}^0(0)=q_{\mu}(0)\hspace{.3cm},\hspace{.3cm}p_{\mu}^0(0)=\sum_{\nu}g_{_{\mu\nu}}q_{\nu}^{\prime}(0)
\end{equation}
and $\big(\tilde{\phi},\tilde{a}\big)$ satisfy the condition
\begin{equation}
\label{t1.eq7}
\begin{split}
&\nabla.\tilde{a}(t,.)-\big(i\phi\big(.;q(t)\big),\tilde{\phi}(t,.)\big)=0\\
&\big(n_{_{\mu}},(\tilde{\phi},\tilde{a})\big)=0
\end{split}
\end{equation}
and the maps
\begin{equation}
\label{t1.e6.new}
\begin{split}
&t\to \tilde{p}(t)\hspace{.2cm},\hspace{.2cm}t\to \frac{1}{\epsilon}\frac{dp}{dt}\\
&t\to \tilde{q}(t)\hspace{.2cm},\hspace{.2cm}t\to \frac{1}{\epsilon}\frac{dq}{dt}\\
&t\to \big((\tilde{\phi},\tilde{a}),(\tilde{\phi}_t,\tilde{a}_t)\big)\in H^3\oplus H^2
\end{split}
\end{equation}
are continuous and bounded independent of $\epsilon$. In addition $|A_0(t,.)|_{_{L^{\infty}}}=O(\epsilon^3)$ and the map $t\to \sigma (t)\in C^{\infty}\big(\mathbb{R}^2\big)$ is twice differentiable and the solution has the regularity
\begin{equation}
\label{t1.e7}
(\tilde{q},\tilde{p})\in C^2\big([0,T_{*}]\big)\oplus C^1\big([0,T_{*}]\big)
\end{equation}
and 
\begin{equation}
\label{t1.e8}
(\tilde{\phi},\tilde{a})\in C^1\big([0,T_{*}],H^1\oplus L^2\big)\cap C\big([0,T_{*}],H^3\oplus L^2\big)
\end{equation}
\end{theorem}
\item
In \cite{Pav11}, the following result has been proved:
\begin{theorem}
\label{thm6}
Consider a geodesic $Q:[0,\tau_0]\to M_N$. Suppose that
\[
Q(\tau)=\big(\phi(\tau),\alpha_1(\tau),\alpha_2(\tau)\big)
\]
Then, there exist positive numbers $\tau_1\le \tau_0$ and $\epsilon_0,M$ and a smooth family
$t\to \Sigma(t;.)\in C^{\infty}(\mathbb{R}^2;\mathbb{R})$
defined on $[0,\frac{\tau_1}{\epsilon}]$, with the follwoing properties:
\\
For each $\epsilon\in [0,\epsilon_0]$ there exists a solution $\big(\Phi^{\epsilon}(t),A_0^{\epsilon}(t),\alpha_1^{\epsilon}(t),\alpha_2^{\epsilon}(t)\big)$ of the Abelian Higgs model defined on the interval $[0,\frac{\tau_1}{\epsilon_0}]$ of the form
\begin{equation}
\label{eq1.thm6}
\begin{split}
&A_0^{\epsilon}(t,x_1,x_2)=\epsilon^3a_0^{\epsilon}(t,x_1,x_2)\\
&\alpha_1^{\epsilon}(t,x_1,x_2)=\alpha_1(\epsilon t;x_1,x_2)+\partial_{_{x_1}}\Sigma(t;x_1,x_2)+\epsilon^2\alpha_1^{\epsilon}(t,x_1,x_2)\\
&\alpha_2^{\epsilon}(t,x_1,x_2)=\alpha_2(\epsilon t;x_1,x_2)+\partial_{_{x_2}}\Sigma(t;x_1,x_2)+\epsilon^2\alpha_2^{\epsilon}(t,x_1,x_2)\\
&\phi^{\epsilon}(t,x_1,x_2)=\phi(\epsilon t;x_1,x_2)e^{i\Sigma}+\epsilon^2\phi^{\epsilon}(t,x,_1,x_2)
\end{split}
\end{equation}
such that 
\begin{equation}
\label{eq2.thm7}
||a_{0}^{\epsilon}(t)||_{_{H^3}}+||a_{1}^{\epsilon}(t)||_{_{H^3}}+||a_{2}^{\epsilon}(t)||_{_{H^3}}+||\phi^{\epsilon}(t)||_{_{H^3}}\le M
\end{equation}
for every $t$ with $0\le t \le \frac{\tau_1}{\epsilon}$.
\end{theorem}
\end{itemize}
Solving the ansatz \eqref{eq31.new} requires the linearization of the Abelian Higgs model equations. The linearization involves an operator $\mathcal{L}$ defined by
\begin{equation}
\label{eq33}
\mathcal{L}[\phi,\alpha](\tilde{\varphi},\tilde{a})=\begin{pmatrix}
-\sum_{i=1}^2\Big(\big(D_i^{(0)}\big)^2\tilde{\phi}-2i\tilde{a}_iD_i^{(0)}\phi\Big)+\frac{1}{2}\big(3|\phi|^2-1\big)\tilde{\phi}\\
\\
-\Delta \tilde{a}_i+|\phi|^2\tilde{a}_i-2\big(i\tilde{\phi},D_i^{(0)}\phi\big)
\end{pmatrix}_{{i=1,2}}
+\begin{pmatrix}
i\phi\big(\nabla.\tilde{a}-\big(i\phi,\tilde{\phi}\big)\big)\\
\\
\partial_i\big(\nabla.\tilde{a}-\big(i\phi,\tilde{\phi}\big)\big)
\end{pmatrix}_{{i=1,2}}
\end{equation}
where
\begin{equation}
\label{cvd.eq1}
D_i^{(0)}=\partial_i-i\alpha_i
\end{equation}
for $i=1,2$. If one assumes the gauge orthogonality condition for $(\tilde{\varphi},\tilde{a})$, the operator $\mathcal{L}$ changes to the following elliptic operator:
\begin{equation}
\label{eq33.14}
{L}[\phi,\alpha](\tilde{\varphi},\tilde{a})=\begin{pmatrix}
-\sum_{i=1}^2\Big(\big(D_i^{(0)}\big)^2\tilde{\phi}-2i\tilde{a}_i D_i^{(0)}\phi\Big)+\frac{1}{2}\big(3|\phi|^2-1\big)\tilde{\phi}\\
\\
-\Delta \tilde{a}_i+|\phi|^2\tilde{a}_i-2\big(i\tilde{\phi},D_i^{(0)}\phi\big)
\end{pmatrix}_{_{i=1,2}}
\end{equation}
In the the proof of theorems \ref{thm1} and \ref{thm6}, a coercivity result about the operator $L$ on the subspace orthogonal to zero modes is crucial. Indeed, the kernel of the elliptic operator $L$ is spanned by the zero modes at $\big(\phi,\alpha\big)$. We have:
\begin{equation}
\int_{{\mathbb{R}}^2}\big(\psi,L\psi\big)= Hess(E)_{_{(\phi,\alpha)}}\big(\psi,\psi\big)+\int_{{\mathbb{R}}^2}\Big(\nabla.\tilde{a}-\big(i\phi,\tilde{\phi}\big)\Big)^2
\end{equation}
where $E$ is the $(0+2)$-dimensional version of the energy functional \eqref{eq7.01}.
The above quantity is denoted by
\[
\overline{Hess}(E)_{_{(\phi,\alpha)}}\big(\psi,\psi\big)
\]
\begin{proposition}\big(Coercivity of the corrected Hessian\big)
\label{eq35.1}
There exists a universal number $\gamma>0$ such that for every 
\[
\psi=\big(\tilde{\phi},\tilde{a}\big)\in H^1
\]
with $\langle\psi,n_{\mu}\rangle_{_{L^2}}=0$ for every zero mode $n_{_{\mu}}$, there holds
\[
\overline{Hess}(E)_{(\phi,\alpha)}\big(\psi,\psi\big)\ge \gamma||(\tilde{\phi},\tilde{a})||_{_{H^1}}^2
\]
\end{proposition}
This was proved in \cite{Stu92}. It is noteworthy that part of the proof relies on the fact that the corrected Hessian can be written as:
\begin{equation}
\label{chess.eq1}
\overline{Hess}(E)_{_{(\phi,\alpha)}}\big(\psi,\psi\big)=\int_{_{\mathbb{R}^2}}|\mathcal{D}_{_{(\phi,\alpha)}}(\tilde{\varphi},\tilde{a})|^2
\end{equation}
where $\mathcal{D}$ is the first order operator defined by \eqref{eq14.6}, regarding the linearization of the Bolgomony equations \eqref{eq14.1} mixed with the gauge orthogonality condition.
\section{Physical Applications}
\label{ch1.4}
One can imagine that when the vortices are far away from each other, their dynamics is almost independent, but the question is how vortices interact when they are close to each other? These issues can be studied by inevstigating the metric and using the aformentioned theorems on the geodesic description of dynamics of vortices. Any element in $M_2$ can be identified with the center of mass and the relative position of its vortices: 
\[M_2=\mathbb{R}^2\times M_2^0\]
The metric on $M_2$ is invariant wih respect to translation and rotation of vortices. Therefore, as mentioned in \cite{VilSh}, the metric on $M_2^0$ can be desribed by:
\[ds^2=F^2(r)dr^2+G^2(r)d\theta^2\]
Here, $(r,\theta)$ are chosen such that $\big(rcos(2\theta),rsin(2\theta)\big)$ represents the relative position of the vortices. When, $r$ is large enough, the metric is almost flat:
\[ds^2\approx dr^2+r^2d\theta^2\]
It has been shown in \cite{Rub} that near the origin, the metric takes the form
\[
ds^2\approx (cr)^2(dr^2+r^2d\theta^2)
\]
This is in accordance with the fact that the metric on $M_2$ behaves smoothly with respect to the local coordinates $(z_1+z_2,z_1z_2)$. The above form of the metric implies that the curvature at the origin is positive. The surface $M_2^0$ can be visualized by a cone which is smoothed at the corner.
\\
In \cite{Stu92}, two scenarios for the interaction of vortices when they are close to each other has been discussed:
\begin{enumerate}
\item{\textbf{Repulsion and attraction of vortices:}}
\\
Suppose that $\lambda=1+\epsilon^2$ with $\epsilon$ small enough. In the settings of theorem \ref{thmstl}, the dynamic of vortices can be approximately modeled by the Hamiltonian dynamics corresponding to the Hamiltionian:
\begin{equation}
\label{eq55}
H(p,q)=\frac{1}{2}g^{*}(p,p)+V(q)
\end{equation}
where 
\begin{equation}
\label{eq56}
V(q)=\frac{1}{8}\int_{\mathbb{R}^2}\big(1-|\phi|^2\big)^2
\end{equation}
Suppose that $Z_{1}(\tau),Z_{2}(\tau)$ denote the location of the voritices as complex numbers. Consider the coordinate system
\begin{equation}
\label{eq57}
\begin{split}
&P(\tau)=Z_1(\tau)+Z_2(\tau)\\
&Q(\tau)=Z_1(\tau)Z_2(\tau)
\end{split}
\end{equation}
Suppose that
\begin{equation}
\label{eq57.1}
\begin{split}
&Z_1(0)=-Z_2(0)\\
&|Z_1|^{\prime}(0)=|Z_2^{\prime}|(0)=0
\end{split}
\end{equation}
Then, $Z_1(\tau)=-Z_2(\tau)$ for $\tau>0$. Since the potential $V$ depends only on $|Z_1-Z_2|$, then we can write 
\begin{equation}
\label{eq58}
V(q(\tau))=u(|Z_1-Z_2|)
\end{equation}
for some function $u$. Also, we can write
\begin{equation}
\label{eq59}
\frac{1}{2}g(q^{\prime},q^{\prime})=f(|Q|)|Q^{\prime}|^2
\end{equation}
where $f$ is nonnegative. Using the conservation of energy, we have:
\begin{equation}
\label{eq60}
f(|Q|)|Q^{\prime}|^2+u(|Q(\tau)|)=u(|Q(0)|)
\end{equation}
This implies that $|Q(\tau)|$ is decreasing, if the initial condition is nonzero. It has been conjectured that the above function $u$ is a decreasing function. If this is true, we can deduce that for $\lambda=1+\epsilon^2$, the vortices repell under the prescribed initail conditions. Similar arguments imply the attraction of voritces for $\lambda=1-\epsilon^2$.
\item{\textbf{Scattering of vortices:}}
Consider the initial position where $Z_1,Z_2$ are located symmterically with respect to origin on the real line. Suppose that $Q=q_1+iq_2$. Let
\begin{equation}
\label{eq60.new}
\begin{split}
&P(0)=P^{\prime}(0)=0\\
&q_1(0)<0\hspace{.2cm},\hspace{.2cm}q_1^{\prime}(0)=M>0\hspace{.2cm},\hspace{.2cm}q_2(0)=q_2^{\prime}(0)=0
\end{split}
\end{equation}
The vortcies will remain on the real line and when $M$ is large enough, the dynamics can be described by:
\begin{equation}
\label{eq61}
q_2(\tau)=0\hspace{.2cm},\hspace{.2cm}q_1(\tau)=f(\tau)
\end{equation}
where $f$ eventually becomes positive. This implies that
\begin{equation}
\label{eq62}
Z_1(\tau)=-Z_2(\tau)=\sqrt{-f(\tau)}
\end{equation}
Therefore, the vortices move towards each other along the real line and when $f$ changes sign from negative to positive, they move away from each orther along the $y$-axis. This is the so-called right angle scattering. This can also be seen by looking at the geodesics on the manifold $M_2^0$. When a geodesic passes from the origin, the angle $\theta$ changes from $0$ to $\frac{\pi}{2}$ which is another way to represent the scattering.
\end{enumerate} 
\pagebreak
\chapter{Dynamics of Abelian Higgs vortex lines at the critical coupling}
\label{ch2}
Consider the Minkowski space-time $\mathbb{R}^{1+3}$ and let $(t,x_1,x_2,z)$ denote the coordinates where $t$ is the time. As explained in section \ref{ch1.sec1}, we aim to glue the vortex configuration solutions along the $t$ and $z$-direction in the Minkowski space-time $\mathbb{R}^{1+3}$ to construct solutions for the AHM modulated by wave maps. That is given a wave map $q:[0,T)\to \mathbb{R}\to M_N$, for every $\epsilon>0$ we want to find solutions to AHM close to $q(\epsilon t,\epsilon z)$ in the $(\Phi,A_1,A_2)$ variables and also with small $(A_0,A_3)$ variables, over a domain $[0,\frac{T^{\prime}}{\epsilon})\times \mathbb{R}^3$ for some $T^{\prime}>0$. The precise statement is in section \ref{ch2.mres}.
\\
In gluing constructions, the general recipe is to have an ansatz to construct approximate solutions and then add perturbations to find an honest solution to the underlying PDE. The ansatz involves iteration of a procedure to improve the order of error. There is no reason that by repeating the procedure, we converge to an exact solution to the differential equations. This is similar to say that when we solve an ODE by considering a Taylor series for the solution, the series does not necessarily converge to the solution.
\\
In the literature of PDEs, gluing constructions have been applied to various equations in different contexts, including dispersive equations, fluid dynamics and geometric wave equations. In terms of the nonlinear Schrodinger equations, one can look at \cite{FlWe} by Floer and Weinstein. In fluid dynamics, there are several papers including \cite{DDMW}. In hyperbolic PDEs, once can refer to \cite{PJM}.
\\
 Gluing construction in a gauged system of partial differential equations is the main matter of concern in this thesis. As we will see, considering a correct gauge condition is the main concept here. The crucial point is that the gauge condition in the ansatz and the perturbation part are different from each other. 
\section{Notation}
\begin{enumerate}
\item 
In this chapter, in the context, when there are more than one choice for the gauge fields under discussion, corresponding to the vortex configuration $(\phi,\alpha)$, we use the following notation for the covariant derivative:
\begin{equation}
\label{cdr.eq1}
D_{\mu}^{(0)}=\partial_{_{\mu}}-i\alpha_{_{\mu}}\hspace{.3cm}\mu=1,2
\end{equation}
If there are no other gauge fields available in the context, we use the notation $D_{\mu}$ as in chapter \ref{ch1}.
\item 
Suppose that $(\phi,\alpha)$ is a vortex configuration. Then,
\begin{equation}
\label{eq33.ch2}
\mathcal{L}[\phi,\alpha](\tilde{\varphi},\tilde{a})=\begin{pmatrix}
-\sum_{i=1}^2\big(D_i^2\tilde{\phi}-2i\tilde{a}_iD_i\phi\big)+\frac{1}{2}\big(3|\phi|^2-1\big)\tilde{\varphi}\\
-\Delta \tilde{a}_i+|\phi|^2\tilde{a}_i-2\big(i\tilde{\varphi},D_i\phi\big)
\end{pmatrix}_{i=1,2}
+\begin{pmatrix}
i\phi\big(\nabla.\tilde{a}-\big(i\phi,\tilde{\varphi}\big)\big)\\
\partial_i\big(\nabla.\tilde{a}-\big(i\phi,\tilde{\varphi}\big)\big)
\end{pmatrix}_{i=1,2}
\end{equation}
\item 
Suppose that $(\phi,\alpha)$ is a vortex configuration. Then,
\begin{equation}
\label{eq33.ch2.new}
{L}[\phi,\alpha](\tilde{\varphi},\tilde{a})=\begin{pmatrix}
-\sum_{i=1}^2\big(D_i^2\tilde{\varphi}-2i\tilde{a}_iD_i\phi\big)+\frac{1}{2}\big(3|\phi|^2-1\big)\tilde{\varphi}\\
-\Delta \tilde{a}_i+|\phi|^2\tilde{a}_i-2\big(i\tilde{\varphi},D_i\phi\big)
\end{pmatrix}_{i=1,2}
\end{equation}
\item 
\label{bcs.1}
The components of a point in the spacetime $\mathbb{R}^{1+3}$ are denoted as:
\[
(t,x)=(t,x_1,x_2,z)
\]
and also, we encapsulate $x_1$ and $x_2$ by
\[
y=(x_1,x_2)
\]
Also, we use the coordinate system
\[
(y,\tau,\zeta)=((x_1,x_2),\epsilon t,\epsilon z)
\]
where $\epsilon$ is a scaling parameter in the context.
\item 
In the calculations, $\Delta$ refers to the Laplacian operator on $\mathbb{R}^3$:
\[
\Delta=\partial_{x_1}^2+\partial_{x_2}^2+\partial_{z}^2
\]
and
\[
\Delta_y=\partial_{x_1}^2+\partial_{x_2}^2
\]
\item 
\label{zmod1}
If $(r_{\alpha})_{\alpha}$ denotes the special coordinate system introduced in section \ref{msvg} for the moduli space $M_N$, then the zero modes are denoted by
\[
\tilde{n}_{\alpha}=\frac{\partial}{\partial r_{\alpha}}
\]
\item 
If we apply the Gram-Schmidt process to $(\tilde{n}_{\alpha})_{\alpha}$, we obtain an orthonormal basis $(n_{\alpha})_{\alpha}$ for each point on the tangent space $TM_N$ which depends smoothly on the base point.
\item
\label{ch2.n1}
Suppose that $X,Y,Z$ are normed vector spaces over $\mathbb{R}$ or $\mathbb{C}$ and $U$ is an open subset of $Y$.  We say that a function $f:X\times U\to Z$ is of class $\mathcal{E}_m(X,U,Z)$ if for every multi-index $r$ with $|r|\le m$, one can find $A,\gamma>0$ such that
\[
|D^r f(x,y)|\le Ae^{-\gamma|x|}
\] 
for every $(x,y)\in X\times U$.
\item
\label{ch2.n2}
The components of the zero mode $n_{\alpha}$ or $\tilde{n}_{\alpha}$ at a vortex configuration is denoted by
\[
\big(n_{_{\alpha,\varphi}},n_{_{\alpha,1}},n_{_{\alpha,2}}\big)
\]
or 
\[
\big(\tilde{n}_{_{\alpha,\varphi}},\tilde{n}_{_{\alpha,1}},\tilde{n}_{_{\alpha,2}}\big)
\]
where $n_{_{\alpha,\phi}},\tilde{n}_{_{\alpha,\phi}}$ correspond to the Higgs field  component and $n_{_{\alpha,1}},\tilde{n}_{_{\alpha,1}}$ and $n_{_{\alpha,2}},\tilde{n}_{_{\alpha,2}}$ correspond to the gauge field components.
\item
\label{ch2.n3}
For a set $S$, $\mathcal{R}[S]$ refers to the ring generated by elements of $S$.  
\end{enumerate}
\section{Main result}
\label{ch2.mres}
The final result of this thesis is the following
\begin{theorem}
\label{thm2.main}
Suppose that $q:[0,T]\times \mathbb{R}\to M_N$ is a smooth wave map and it is constant at $(+\infty)$ and $(-\infty)$, then there exist numbers $T_0,\epsilon_0,M$ such that for every $\epsilon$ with $0< \epsilon \le \epsilon_0$, there exists a solution
\[
\big(\Phi^{\epsilon},\sum_{j=0}^3A_j^{\epsilon} dx_j\big)
\]
of the Abelian Higgs model with $\lambda=1$ on 
\[
\big{[}0,\frac{T_0}{\epsilon}\big{)}\times \mathbb{R}^3
\]
with 
\begin{equation}
\label{eq1.new}
\begin{split}
&\Phi^{\epsilon} (t,x,z)=\phi (x;q(\epsilon t, \epsilon z))+{\varphi}^{\epsilon}\\
&A_j^{\epsilon}(t,x,z)=\alpha_j (x;q(\epsilon t, \epsilon z))+ {a}_j^{\epsilon}\hspace{1cm}j=1,2
\end{split}
\end{equation}
where $(\phi,\alpha_1,\alpha_2)$ are the components of the vortex configuration solutions and:
\begin{equation}
\label{eq2}
||\varphi^{\epsilon}(t,.)||_{H^3}+||\partial_t\varphi^{\epsilon}(t,.)||_{H^2}+\sum_{j=1}^2\Big(||a_j^{\epsilon}(t,.)||_{_{H^3}}+||\partial_t a_j^{\epsilon}(t,.)||_{_{H^2}}\Big)\le \epsilon^{\frac{3}{2}} M
\end{equation}
and
\begin{equation}
\label{eq1001.1}
\sum_{j=0,3}||A_j^{\epsilon}(t,.)||_{L^{\infty}}\le \epsilon M
\end{equation}
and
\begin{equation}
\label{eq1001}
\sum_{j=0,3}||\nabla {A}_j^{\epsilon}(t,.)||_{_{H^2}}+||\partial_t {A}_j^{\epsilon}(t,.)||_{_{H^2}}\le \epsilon^{\frac{1}{2}} M
\end{equation}
for some $M$ and every $t$, where $\nabla$ refers to spatial derivatives.
\end{theorem}
\section{Sketch of the proof}
As described above, the two main steps are
\begin{enumerate}
\item
An ansatz to construct approximation solutions
\item
Perturbation of the constructed approximate solution
\\
In subsections \ref{an.up} and \ref{per.up}, we are going to explain the big picture of these two steps. The detailed proof will be in sections \ref{an} and \ref{per}.
\subsection{Ansatz: An overview}
\label{an.up}
In the first step, the ansatz, we look for a series of the form 
\begin{align}
\label{sk1.p1.eq0}
&\Phi_m(t,x)=\phi(y;q(\epsilon t,\epsilon z))+\sum_{i=1}^{m}\epsilon^{2i}\varphi_i(\epsilon t,y,\epsilon z)\\
&A_{m,j}(t,x)=\alpha_j(y;q(\epsilon t,\epsilon z))+\sum_{i=1}^{m}\epsilon^{2i}a_{j,i}(\epsilon t,y,\epsilon z)\hspace{.7cm}{j=1,2}\\
&A_{m,j}(t,x)=\sum_{i=1}^{m}\epsilon^{2i-1}a_{j,i}(\epsilon t,y,\epsilon z)\hspace{2.9cm}{j=0,3}
\end{align}
such that the following conditions are satisfied:
\begin{align}
&S_{\varphi}[\Phi_m,A_m]=D_0D_0\Phi_m-\sum_{j=1}^3D_jD_j\Phi_m+\frac{1}{2}\big(|\Phi_m|^2-1\big)\Phi_m=O(\epsilon ^{2m+2})\label{sk.1ch2.eq1.1}\\
&S_{a_j}[\Phi_m,A_m]=\big(\partial_0\mathcal{F}_{0j}-\sum_{k=1}^3\partial_k\mathcal{F}_{kj}\big)-\big(i\Phi_m,D_j\Phi_m\big)=O(\epsilon ^{2m+2})\hspace{.3cm}j=1,2\label{sk.1ch2.eq1.2}
\\
&S_{a_j}[\Phi_m,A_m]=\big(\partial_0\mathcal{F}_{0j}-\sum_{k=1}^3\partial_k\mathcal{F}_{kj}\big)-\big(i\Phi_m,D_j\Phi_m\big)=O(\epsilon ^{2m+1})\hspace{.3cm}j=0,3\label{sk.1ch2.eq1.3}
\end{align}
The process is inductive. The base case goes in this way:
By looking at the error condition \eqref{sk.1ch2.eq1.3}, we come up with two equations 
\begin{equation}
\label{sk.ch2.eq1.1}
-\Delta_y {a}_{j,1}+|\phi|^2{a}_{j,1}=f_j
\end{equation}
for $j=0,3$ where $f_j$ depends on the wave map. These equations are solvable by lemma \ref{al1} in the appendix. Then, the error condition \eqref{sk.1ch2.eq1.1} and \eqref{sk.1ch2.eq1.2}, leads to the following system of equations
\begin{equation}
\label{ed.eq2}
\mathcal{L}\psi=E
\end{equation}
where $\psi=(\varphi_{1,1},a_{1,1},a_{2,1})$ and $E$ depends on the wave map. (It is the error from the step zero in the ansatz where we simply plug-in the wave map $q$ in the equations and let $A_0=A_3=0$) As mentioned in chapter 1, the operator $\mathcal{L}$ has an infinite dimensional kernel spanned by the infinitesimal gauge transformations and the zero modes. We choose the gauge orthogonality condition for $\psi$:
\begin{equation}
\label{goc.ed1}
\nabla.\tilde{a}-(i\phi,\tilde{\varphi})=0
\end{equation}
Then, equation \eqref{ed.eq2} reads
\begin{equation}
\label{ed.eq3}
L\psi=E
\end{equation}
 Following the coercivity statement \ref{eq35.1} from the Stuart's paper, we have proved in lemma \ref{al2} in the appendix that the system \eqref{ed.eq3} is solvable if $E$ is orthogonal to zero modes and the solution $\psi$ will be again in the orthogonal subspace to zero modes. This orthogonality condition leads to the wave map equation. But, now we need to check that $\psi$ satisfies the gauge orthogonality condition. In lemma \ref{al2} in the appendix, we have proved that if $E$ satisfies the gauge orthogonality condition, then $\psi$ also does. We separately proved that $E$ satisfies this condition and this finishes the first step. 
\\
The induction step goes in this way: Suppose that we have constructed an approximate solution $(\Phi_m,A_m)$ in the form of \eqref{sk1.p1.eq0}. If we want to update the number $m$ and follow the similar procedure as the first step explained above, we encounter the issue that the orthogonality condition may not be satisfied. In this regard, we use some degrees of freedom in the current stage for the approximate solution $(\Phi_m,A_m)$ that we have not used so far. In \eqref{sk1.p1.eq0}, we can replace the vector 
\[
\psi_m=({\varphi}_m,a_{1,m},a_{2,m})
\]
by 
\[
\psi_m+\sum_{\mu}c_{\mu}(\epsilon t,\epsilon z)n_{\mu}(y)
\]
for any $t$ and $z$ and this does not disturb the leading order term of the error from the approximate solution $(\Phi_m,A_m)$, as the zero modes belong to the kernel of the operators $\mathcal{L}$ and $L$. 
\\
Therefore, we consider the following:
\begin{equation}
\begin{split}
\begin{pmatrix}
(\Phi_{m+1}, A_{_{m+1,1}}, A_{_{m+1,2}})
\\
\\
(A_{_{m+1,0}},A_{_{m+1,3}})
\end{pmatrix}
&=\begin{pmatrix}
(\Phi_{m}, A_{_{m,1}}, A_{_{m,2}})
\\
\\
(A_{_{m,0}},A_{_{m,3}})
\end{pmatrix}
+\epsilon^{2m}
\begin{pmatrix}
\sum_{{\mu}}c_{\mu}(\epsilon t,\epsilon z)n_{\mu}(y)
\\
\\
0
\end{pmatrix}
\\&
\\&
+\begin{pmatrix}
\epsilon^{2m+2}(\varphi_{_{m+1}},{a}_{_{1,m+1}},a_{_{2,m+1}})(\epsilon t,y,\epsilon z)
\\
\\
\epsilon^{2m+1}\big({a}_{_{0,m+1}},a_{_{3,m+1}}\big)(\epsilon t,y,\epsilon z)
\end{pmatrix}
\end{split}
\end{equation}
We let $\{{c_{\mu}}\}_{\mu}$ to be unknown. Now, we repeat the same process and let the new functions in the ansatz depend on $\{c_{\mu}\}_{\mu}$. When we want to find $(\varphi_{m+1},a_{1,m+1},a_{2,m+1})$, we observe that the orthogonality condition leads to a hyperbolic PDE for the functions $\{c_{\mu}\}_{\mu}$ which has local well-posedness. We find $\{c_{\mu}\}_{\mu}$ in this way. Afterwards, the other pieces of the ansatz will be automatically determined.
\subsection{Perturbation of approximate solutions: An overview}
\label{per.up}
Suppose that we have found an approximate solution $v=(\varphi,a)$ as mentioned above. We look for a perturbation $u=(\tilde{\varphi},\tilde{a})$ such that $v+u$ is a solution for the AHM. The strategy is as follows:
\begin{itemize}
\item
\textbf{Imposing a gauge condition:}
\begin{equation}
\label{sk.chp2.gc}
-\partial_0\tilde{a}_0+\sum_{j=1}^3\partial_j\tilde{a}_j-(i\varphi,\tilde{\varphi})=0
\end{equation}
\item
\textbf{Rewriting the equations based on the gauge condition:}
\\
Let
\[
\psi=(\tilde{\varphi},\tilde{a}_1dx_1+\tilde{a}_2dx_2)
\]
After imposing the gauge condition \ref{sk.chp2.gc}, the equations can be written as:
\begin{equation}
\label{sk.ch2.rw1}
u_{tt}-u_{zz}+Mu=Pu+N+E
\end{equation}
where 
\begin{equation}
\label{sk.ch2.rw2}
Mu=\begin{pmatrix}
L\psi
\\
-\Delta_y \tilde{a}_0+|\phi|^2 \tilde{a}_0
\\
-\Delta_y \tilde{a}_3+|\phi|^2 \tilde{a}_3
\end{pmatrix}
\end{equation}
and $Pu$ consists of small linear terms, $Nu$ consists of nonlinear terms and $E$ is the error from the approximate solution $v$.
\item 
\textbf{Local Existence:}
\\
For the system of equations such as \eqref{sk.ch2.rw2}, there are some standard local wellposed-ness results. We apply such an statement to find a solution defined over an interval $[0,T_0)$. These statements come with an apriori estimate.
\item 
\textbf{Bootstrap:}
Consider the constructed localized in time solution in the previous step. We want to use equations \eqref{sk.chp2.gc} and the apriori estimates to find refined estimates for $u$. The idea is again to look at each slice $\{t=t_0,z=z_0\}$ and look at the orthogonal and tangential components to zero modes. 
\\
Let us fix the time as $t$. Consider the following energy quantities:
\begin{equation}
\label{sk.ch2.b1}
Q_1(t)=\int_{\mathbb{R}^3}|u_t|^2+|u_z|^2+(Mu,u)
\end{equation}
and
\begin{equation}
\label{sk.ch2.b2}
Q_2(t)=\int_{\mathbb{R}^3}|(Mu)_t|^2+|(Mu)_z|^2+(M^2u,Mu)
\end{equation}
By using equations \eqref{sk.ch2.rw1}, one can verify that 
\begin{equation}
\label{sk.ch2.b3}
Q_1^{\prime}(t)+Q_2^{\prime}(t)\le C\epsilon F(||u||_{_{H^3}},||u_t||_{_{H^2}}, ||E||_{_{H^2}}, ||N||_{_{H^2}})
\end{equation}
where $F$ is an smooth function with $F(0)=DF(0)=0$
\\
The quantity $Q_1(t)+Q_2(t)$ bounds $\tilde{a}_0,\tilde{a}_3,(\tilde{a}_0)_t, (\tilde{a}_3)_t$ and $\psi^{\perp}$ where for every $t,z$ the function $\psi^{\perp}$ is such that
\begin{equation}
\label{sk.ch2.b4}
\psi(.,t,z)=\psi^{\perp}(.,t,z)+\sum_{\mu}c_{\mu}(t,z)n_{\mu}(.)
\end{equation}
where $\{n_{\mu}\}$ are the corresponding zero modes and 
\[\langle\psi^{\perp},n_{\mu}\rangle=0\]
We write the variables $\tilde{a}_0$ and $\tilde{a}_3$ in the form:
\begin{equation}
\label{sk.ch2.b4.1}
\tilde{a}_j=f_j+\partial_0 \chi_j
\end{equation}
where $\chi_j$ solves the equations
\begin{equation}
\label{sk.ch2.b4.2}
(-\Delta+|\varphi|^2)\chi_j=\partial_0 \tilde{a}_j
\end{equation}
By taking the inner product of equation \eqref{sk.ch2.rw1} in the Higgs part and the gauge field part for $A_1$ and $A_2$, we observe that
\begin{equation}
\label{sk.ch2.b5}
(c_{\mu})_{tt}-(c_{\mu})_{zz}=\partial_0 h_0+\partial_3h_3+h
\end{equation}
for some function $h_0,h_1$ and $h_3$. Then, we use the explicit formula for the wave equations to find $c_{\mu}$ and the final expression for $c_{\mu}$ can be expressed in terms of $h_0, h_3$ and $h$, rather than $\partial_0h_0$ or $\partial_3 h_3$. The expression of the function $h_0$ involves $\chi_j$. Based on this, we have good enough estimates for $h_0,h_3$ and $h$ and hence for $c_{\mu}$. 
\\
We also use similar ideas to control $\partial_t\psi$ by decomposing it into zero modes and orthogonal components. In the end, we observe that these estimates are enough to control $||u||_{_{H^3}}+||u_{t}||_{_{H^2}}$.
\\
Using the aforementioned estimates, we afford to do a bootstrap argument. That is we prove that if we have an estimate for $u$ on a time interval $[0,T)$:
\begin{equation}
\label{res}
||u||_{_{H^3}}+||u_t||_{_{H^2}}\le K\le \epsilon^3
\end{equation}
Then, 
\begin{equation}
\label{res.1}
||u||_{_{H^3}}+||u_t||_{_{H^2}}\le C(\epsilon t)^{\frac{1}{2}}K+\epsilon^4
\end{equation}
\item {\textbf{Moving forward in time:}}
\\
Estimate \ref{res.1} suffices to go through an iterative process of applying a local existence result and apriori estimate, refining the estimate, again applying a local existence theorem to the end point of the interval and again refining the estimates and so on. In this way, we prove the desired estimates over a time interval of length $\frac{\kappa}{\epsilon}$ for some $\kappa$ independent of $\epsilon$
\end{itemize}
\end{enumerate}
\section{Approximate solutions for the Abelian Higgs Model}
\label{an}
For the complex function
$\Phi:\mathbb{R}^{1+3}\to\mathbb{C}$ and the real 1-form
\[
A=A_0dt+\sum_{i=1}^3A_idx^i
\]
consider the quantities:
\begin{equation}
\label{ch2.eq1}
\begin{split}
&S_{\varphi}[\Phi,A]=D_0D_0\Phi-\sum_{j=1}^3D_jD_j\Phi+\frac{1}{2}\big(|\Phi|^2-1\big)\Phi\\
&S_{a_j}[\Phi,A]=\big(\partial_0\mathcal{F}_{0j}-\sum_{k=1}^3\partial_k\mathcal{F}_{kj}\big)-\big(i\Phi,D_j\Phi\big)\hspace{1cm}j=0,1,2,3
\end{split}
\end{equation}
Let
\[
S[\Phi,A]=\big(S_{\varphi}[\Phi,A],S_{a_0}[\Phi,A],\cdots,S_{a_3}[\Phi,A]\big)
\]
For given $\epsilon>0$, we find a pair $(\Phi_n,A_n)$ such that
\[
S[\Phi_n,A_n]=O(\epsilon^{2n+1},\epsilon^{2n-1},\epsilon^{2n+1},\epsilon^{2n+1},\epsilon^{2n-1})
\] 
and
\[
\begin{pmatrix}
\Phi_n\\
A_n
\end{pmatrix}
(t,x)=\begin{pmatrix}
(\phi,\alpha)(q(y;\epsilon t,\epsilon z))\\
0
\end{pmatrix}
+\begin{pmatrix}
O(\epsilon^2)
\\
O(\epsilon)
\end{pmatrix}
\]
where $q:[0,T]\times \mathbb{R}\to M_N$ is a wave map. The precise statement is as follows:
\begin{proposition}
\label{2.3.p1}
Suppose that $q:[0,T]\times\mathbb{R}\to M_N$ is a smooth wave map with compactly supported image and bounded derivatives of any order. There exists $\epsilon_0$ such that if $\epsilon<\epsilon_0$, then for every $m$ one can find smooth $(\Phi_m,A_m)$ the form
\begin{equation}
\label{p1.eq0.new}
\begin{split}
&\Phi_m(t,x)=\phi(y;q(\epsilon t,\epsilon z))+\sum_{i=1}^{m}\epsilon^{2i}\varphi_i(\epsilon t,y,\epsilon z)\\
&A_{m,j}(t,x)=\alpha_j(y;q(\epsilon t,\epsilon z))+\sum_{i=1}^{m}\epsilon^{2i}a_{j,i}(\epsilon t,y,\epsilon z)\hspace{.7cm}{j=1,2}\\
&A_{m,j}(t,x)=\sum_{i=1}^{m}\epsilon^{2i-1}a_{j,i}(\epsilon t,y,\epsilon z)\hspace{2.9cm}{j=0,3}
\end{split}
\end{equation}
defined on $[0,\frac{T_m}{\epsilon}]\times \mathbb{R}$ for some $T_m>0$ such that
if 
\[
A_m=(A_{m,j})_{_{j=0}}^{^{3}}
\]
then
\begin{equation}
\label{p1.eq0.1}
\begin{pmatrix}
S_{\varphi}[\Phi_m,A_m]\\
S_{a_1}[\Phi_m,A_m]\\
S_{a_2}[\Phi_m,A_m]
\end{pmatrix}(t,x)=\sum_{i=m+1}^{3m}\epsilon^{2i}f_i(\epsilon t,y,\epsilon z)
\end{equation}
and
\begin{equation}
\label{p1.eq0.2}
(S_{a_j}[\Phi_m,A_m])(t,x)=\sum_{i=m+1}^{3m}\epsilon^{2i-1}g_{j,i}(\epsilon t,y,\epsilon z)\hspace{.4cm}j=0,3
\end{equation}
where if
\begin{equation}
h\in \{f_i,g_{j,i},\varphi_i\}_{_{i,j}}\cup\{a_{j,i}\}_{_{j=1,2}}\cup\{a_{j,i}\}_{_{ i>1,j\notin\{1,2\}}}
\end{equation}
then for any multi-index $r$, one can find $A,\delta>0$ such that
\begin{equation}
\label{prop.ans.eq1}
|D^rh(\tau,y,\zeta)|\le Ae^{-\delta|y|}
\end{equation}
and 
\begin{equation}
\label{prop.ans.eq2}
|D^r a_{j,1}(\tau,y,\zeta)|\le A(|y|+1)^{-|r|-1}
\end{equation}
for $j\in \{0,3\}$ and any $y,\tau,\zeta$.
\end{proposition}
\begin{proof}
The proof of this statement goes by induction on $m$. Following notation \ref{ch2.n1}, $\mathcal{E}$ refers to the class of all functions $f:\mathbb{R}^2\times [0,S]\times \mathbb{R}^{1}\to \mathbb{C}$ for some $S$ such that for every multi-index $r$, one can find $A,\gamma>0$ such that
\[
D^rf(y,\tau,\zeta)\le Ae^{-\gamma|y|}
\]
for every $y,\tau,\zeta$.
\\
Suppose that $m=1$. Let
\begin{equation}
\label{p1.eq1}
\begin{split}
&\Phi_1(t,x)=\phi(y;q(\tau, \zeta))+\epsilon^2{\varphi}_1(\tau,y,\zeta)\\
&A_{1,j}(t,x)=\alpha_j(y;q(\tau, \zeta))+\epsilon^2{a}_{{1,j}}(\tau,y,\zeta)\hspace{.8cm}j=1,2\\
&A_{1,j}(t,x)=\epsilon {\tilde{a}}_{{j}}(\tau,y,\zeta)\hspace{3.5cm}j=0,3
\end{split}
\end{equation}
For simplicity, we write
\begin{equation}
\label{p1.eq0}
\begin{split}
&\phi(.)=\phi(.;q(\tau, \zeta))\\
&\alpha_j(.)=\alpha_j(.;q(\tau,\zeta))\hspace{.5cm}j=1,2
\end{split}
\end{equation}
Following notation \ref{bcs.1}, assume that $\tau=\epsilon t$ and $\zeta=\epsilon z$ and we use the coordinate system
\[
(y,\tau,\zeta)=((x_1,x_2),\epsilon t,\epsilon z)
\]
By plugging in the ansatz \eqref{p1.eq1} into \eqref{ch2.eq1} and using the fact that
\[
\partial_{t}=\epsilon \partial_{\tau}\hspace{.2cm},\hspace{.2cm}\partial_{z}=\epsilon \partial_{\zeta}
\]
one gets:
\begin{equation}
\label{p1.eq1.9}
\begin{split}
S_{a_j}[\Phi_1,A_1]=\epsilon g_{j,1}+\epsilon^3g_{j,2}+\epsilon^5g_{j,3}\hspace{.5cm}j=0,3
\end{split}
\end{equation}
where the functions $g_{j,i}(\tau,y,\zeta)$ are independent of $\epsilon$ and
\begin{equation}
\label{p1.eq1.9.1}
\begin{split}
g_{j,1}=-\big(\Delta_y \tilde{a}_{j}-|\phi|^2\tilde{a}_{j}+(i\phi,\frac{\partial \phi }{\partial \tau})\big)\hspace{.4cm}j=0
\\
g_{j,1}=-\big(\Delta_y \tilde{a}_{j}-|\phi|^2\tilde{a}_{j}+(i\phi,\frac{\partial \phi }{\partial \zeta})\big)\hspace{.4cm}j=3
\end{split}
\end{equation}
where $\Delta_y$ refers to the laplacian with respect to the variables $x_1,x_2$. We find the functions $\tilde{a}_{{j}}$ such that $g_{j,1}=0$ for $j=0,3$ for every $\tau,\zeta$. These equations are equivalent to the gauge orthogonality condition for the vectors
\[
\Big(\frac{\partial \phi}{\partial \tau}-i\phi \tilde{a}_{0},\frac{\partial \alpha}{\partial \tau}-d \tilde{a}_{0}\Big)
\]
and
\[
\Big(\frac{\partial \phi}{\partial \zeta}-i\phi \tilde{a}_{3},\frac{\partial \alpha}{\partial \zeta}-d \tilde{a}_{3}\Big)
\]
Therefore, by following the strategy in chapter 1 to make the above vectors gauge orthogonal, we can construct the functions $\tilde{a}_{{j}}$ for $j=0,3$. The construction is as follows:
\\
Suppose that
\[
\tilde{\partial}_0=\partial_{\tau}
\]
and
\[
\tilde{\partial}_3=\partial_{{\zeta}}
\]
Suppose that $R>0$ is such that the ball $B(0,R)$ contains all of the vortex centers in the image of the wave map $q$. Suppose that $\rho:\mathbb{R}^2\to \mathbb{R}$ is a nonnegative smooth cutoff function which is $0$ inside the ball $B(0,{R})$ and is $1$ outside of the ball $B(0,2{R})$. Then,
\begin{equation}
\label{p1.eq1.9.2}
\tilde{a}_j=-\frac{1}{2}\rho\tilde{\partial}_j\Theta+b_j
\end{equation}
where $\Theta$ is as in \eqref{eq18} and $b_j$ solves the equation
\begin{equation}
\label{p1.eq1.9.3}
\Delta_y b_j-|\phi|^2b_j=c_j
\end{equation}
for each $\tau,\zeta$ and $c_j=c_j(y,\tau,\zeta)$ is a smooth function of $y,q$ and $\tilde{\partial}_j{q}$ and is supported inside the ball $B(0,2{R})$ in the $y$-plane. By the regularity properties of $q$ and using lemma \ref{l3} in the appendix, one realizes that $b_j\in \mathcal{E}$ and therefore by \eqref{p1.eq1.9.2}, $\tilde{a}_j$ depends smoothly on $y,\tau,\zeta$ and 
\[
(\partial_j \tilde{a}_l-\partial_l \tilde{a}_j) \in \mathcal{E}
\] 
for any $j,l\in \{0,3\}$. Furthermore, according to the formula and \ref{p1.eq1.9.2} and lemma \ref{al0} in the appendix, for any multi-index $r$, there exists $A>0$ such that
\begin{equation}
\label{p1.eq1.9.4}
|D^r \tilde{a}_j(y,\tau,\zeta)|\le A (|y|+1)^{-|r|+1}
\end{equation}
for every $\tau,\zeta$. We define
\[
\tilde{D}_j=\tilde{\partial}_j-i\tilde{a}_j
\]
for $j=0,3$. Also, Suppose that 
\[
\eta=(\phi,\alpha_1,\alpha_2)
\]
and
\[
\psi_1=(\varphi_1,a_{1,1},a_{1,2})
\]
By using the fact that $\eta$ satisfies the static two dimensional AHM for each $\tau,\zeta$, we have:
\begin{equation}
\label{p1.eq2}
\begin{pmatrix}
S_{\varphi}[\Phi_1,A_1]\\
S_{a_j}[\Phi_1,A_1]
\end{pmatrix}_{j=1,2}
=\epsilon^2 (T+\mathcal{L}\psi_1)+\epsilon^4f_2+\epsilon^6 f_3
\end{equation}
where $f_2(\tau,y,\zeta)$ and $f_3(\tau,y,\zeta)$ do not depend on $\epsilon$ and
\[
T=\begin{pmatrix}
\big(\tilde{D}_{_0}^{^2}-\tilde{D}_{_3}^{^2}\big)\phi\\
\\
\big(\tilde{\partial}_{_0}^{^2}-\tilde{\partial}_{_{3}}^{^2}\big)\alpha_j-\partial_{j}\big(\tilde{\partial}_{0}\tilde{a}_{0}-\tilde{\partial}_{{3}}\tilde{a}_{3}\big)
\end{pmatrix}_{_{j=1,2}}
\]
\begin{proposition}
\label{c1}
The vector $T$ is orthogonal to the zero modes at $\eta$ for any $\tau,\zeta$.
\end{proposition}
\begin{Remark}
In fact, we will prove that for $(\phi,\alpha)(.,q(\tau,\zeta))$ and $\tilde{a}_j$ for $j=0,3$ as above, the vector $T$ is orthogonal to all zero modes, if and only if $q$ is a wave map.
\end{Remark}
\begin{proof}
In what follows, Greek letters correspond to the indices for the coordinate system on the moduli space $M_N$. We have:
\begin{equation}
\label{p1.eq3}
T=\tilde{\partial}_0\tilde{D}_0\eta-\tilde{\partial}_3\tilde{D}_3\eta-\Big(i\big(\tilde{a}_0\tilde{D}_0-\tilde{a}_3\tilde{D}_3\big)\phi,0\Big)
\end{equation}
We have:
\begin{equation}
\label{p1.eq4}
\tilde{D}_j\eta=\sum_{\alpha}(\tilde{\partial}_j q^{\alpha}) \tilde{n}_{\alpha}
\end{equation}
for $j=0,3$ where
\[
q=(q^{\alpha})_{\alpha}
\]
in the coordinate system introduced in the first chapter and $\tilde{n}_{\alpha}$'s denote the corresponding zero modes introduced in notation \ref{zmod1}. Therefore,
\begin{equation}
\label{p1.eq5}
T=\sum_{{\alpha}}\Big((\Box q^{\alpha}) \tilde{n}_{_{\alpha}}+(\tilde{\partial}_0q^{\alpha})(\tilde{\partial}_0 \tilde{n}_{\alpha})-(\tilde{\partial}_3q^{\alpha})(\tilde{\partial}_3\tilde{n}_{\alpha})\Big)-\sum_{\alpha}\Big(i(\tilde{a}_0\tilde{\partial}_0 q^{\alpha}-\tilde{a}_3\tilde{\partial}_3 q^{\alpha})\tilde{n}_{_{\alpha,\varphi}},0\big)\Big)
\end{equation}
Therefore,
\begin{equation}
\label{p1.eq6}
\begin{split}
\langle T,\tilde{n}_{\beta}\rangle &=\sum_{\alpha}\Box q^{\alpha}\langle \tilde{n}_{\alpha},\tilde{n}_{\beta}\rangle +\sum_{\alpha}\big((\tilde{\partial}_0q^{\alpha})\langle\tilde{\partial}_0 \tilde{n}_{\alpha},\tilde{n}_{\beta}\rangle-(\tilde{\partial}_3q^{\alpha})\langle\tilde{\partial}_3\tilde{n}_{\alpha},\tilde{n}_{\beta}\rangle\big)\\
&-\sum_{\alpha}(\tilde{a}_0\tilde{\partial}_0 q^{\alpha}-\tilde{a}_3\tilde{\partial}_3 q^{\alpha})\langle i\tilde{n}_{_{\alpha,\varphi}},\tilde{n}_{_{\beta,\varphi}}\rangle
\end{split}
\end{equation}
Since $q$ is a wave map, we have:
\begin{equation}
\label{p1.eq7}
\begin{split}
\Box q^{\alpha}\langle \tilde{n}_{\alpha},\tilde{n}_{\beta}\rangle&=\Gamma^{\alpha}_{\mu\lambda}\big(-\tilde{\partial}_0q^{\mu}\tilde{\partial}_0q^{\lambda}+\tilde{\partial}_3q^{\mu}\tilde{\partial}_3q^{\lambda}\big)\langle \tilde{n}_{\alpha},\tilde{n}_{\beta}\rangle\\
&=\big(-\tilde{\partial}_0q^{\mu}\tilde{\partial}_0q^{\lambda}+\tilde{\partial}_3q^{\mu}\tilde{\partial}_3q^{\lambda}\big)\langle\nabla_{\tilde{n}_{_{\mu}} \tilde{n}_{_{\lambda}},\tilde{n}_{\beta}}\rangle
\end{split}
\end{equation}
where $\{\Gamma^{\alpha}_{\mu\lambda}\}$ denote the Christoffel symbols of the Levi Civita connection corresponding to the metric on $M_N$. According to the Koszul formula, we have:
\begin{equation}
\label{p1.eq8}
\langle\nabla_{\tilde{n}_{\mu}}\tilde{n}_{\lambda},\tilde{n}_{\beta}\rangle=\frac{1}{2}\big(\partial_{\mu}\langle \tilde{n}_{\lambda},\tilde{n}_{\beta}\rangle+\partial_{\lambda}\langle \tilde{n}_{\mu},\tilde{n}_{\beta}\rangle-\partial_{\beta}\langle \tilde{n}_{\lambda},\tilde{n}_{\mu}\rangle \big)
\end{equation}
By combining \eqref{p1.eq7} and \eqref{p1.eq8} and then using \eqref{p1.eq6}, we get:
\begin{equation}
\label{p1.eq9}
\begin{split}
\langle T,\tilde{n}_{\beta}\rangle=\sum_{k\neq 1,2}s(k)\Big[(\tilde{\partial}_kq^{\mu})(\tilde{\partial}_kq^{\lambda})\langle\nabla_{\tilde{n}_{\mu}} \tilde{n}_{\lambda}-\partial_{\lambda}\tilde{n}_{\mu},\tilde{n}_{\beta}\rangle+\tilde{a}_k(\tilde{\partial}_{k}q^{\mu})\langle i\tilde{n}_{\mu,\varphi},\tilde{n}_{\beta,\varphi}\rangle\Big]\\
\end{split}
\end{equation}
where
\begin{equation}
\label{p1.eq9.1}
s(k)=\begin{cases}
1, & \text{if $k=0$}.\\
-1, & \text{if $k>0$}.
\end{cases}
\end{equation}
But,
\begin{equation}
\label{p1.eq10}
\tilde{a}_k=-(\tilde{\partial}_{k}q^{\lambda})\chi_{\lambda}
\end{equation}
where $\chi_{_\lambda}:\mathbb{R}^2\to \mathbb{R}$ is the function which makes the zero mode
\[
\tilde{n}_{\lambda}=(\partial_{\lambda}{\phi}+i\chi_{_\lambda}\phi, \partial _{\lambda}\alpha+d\chi_{_\lambda})
\] 
gauge orthogonal. Therefore,
\begin{equation}
\label{p1.eq11}
\begin{split}
\langle T,\tilde{n}_{\beta}\rangle&=\sum_{k\neq 1,2}s(k)(\tilde{\partial}_kq^{\mu})(\tilde{\partial}_kq^{\lambda})\big(\langle\nabla_{\tilde{n}_{\mu}} \tilde{n}_{\lambda}-\partial_{\lambda}\tilde{n}_{\mu},\tilde{n}_{\beta}\rangle-\chi_{\lambda}\langle i\tilde{n}_{\mu,\varphi},\tilde{n}_{\beta,\varphi}\rangle\big)\\
&=\sum_{k\neq 1,2}s(k)\Big[\sum_{\mu<\lambda}(\tilde{\partial}_kq^{\mu})(\tilde{\partial}_kq^{\lambda})\big(\langle2\nabla_{\tilde{n}_{\mu}} \tilde{n}_{\lambda}-\partial_{\lambda}\tilde{n}_{\mu}-\partial_{\mu}\tilde{n}_{\lambda},\tilde{n}_{\beta}\rangle-\langle i\chi_{_\lambda}\tilde{n}_{\mu,0}+i\chi_{\mu}\tilde{n}_{\lambda,0},\tilde{n}_{\beta,0}\rangle\big)\Big]\\
&+\sum_{k\neq 1,2}s(k)\Big[\sum_{\mu}(\tilde{\partial}_kq^{\mu})^2\big(\langle \nabla_{\tilde{n}_{\mu}} \tilde{n}_{\mu}-\partial_{\mu}\tilde{n}_{\mu},n_{\beta}\rangle-\chi_{\mu}\langle i\tilde{n}_{\mu,\varphi},\tilde{n}_{\beta,\varphi}\rangle\big)\Big]\\
\end{split}
\end{equation}

But according to the Koszul's formula, we have:
\begin{equation}
\label{p1.eq12}
\begin{split}
2(\nabla _{\tilde{n}_{\mu}}\tilde{n}_{\lambda},\tilde{n}_{\beta})&=\partial_{\lambda}\langle \tilde{n}_{\mu},\tilde{n}_{\beta} \rangle+\partial_{\mu}\langle \tilde{n}_{\lambda},\tilde{n}_{\beta} \rangle-\partial_{\beta} \langle \tilde{n}_{\mu},\tilde{n}_{\lambda} \rangle\\
&=\langle \tilde{n}_{\mu},\partial_{\lambda}\tilde{n}_{\beta} \rangle+\langle \partial_{\mu}\tilde{n}_{\lambda},\tilde{n}_{\beta} \rangle+ \langle \tilde{n}_{\lambda},\partial_{\mu}\tilde{n}_{\beta} \rangle+\langle \partial_{\lambda}\tilde{n}_{\mu},\tilde{n}_{\beta} \rangle-\langle \partial_{\beta}\tilde{n}_{\lambda},\tilde{n}_{\mu} \rangle-\langle \tilde{n}_{\mu},\partial_{\beta}\tilde{n}_{\lambda} \rangle
\end{split}
\end{equation}
Therefore,
\begin{equation}
\label{p1.eq14}
\begin{split}
\langle 2\nabla_{\tilde{n}_{\mu}} \tilde{n}_{\lambda}-\partial_{\lambda}\tilde{n}_{\mu}-\partial_{\mu}\tilde{n}_{\lambda},\tilde{n}_{\beta}\rangle=\langle \tilde{n}_{\mu},\partial_{\lambda}\tilde{n}_{\beta}-\partial_{\beta}\tilde{n}_{\lambda}\rangle+\langle \tilde{n}_{\lambda},\partial_{\mu}\tilde{n}_{\beta}-\partial_{\beta}\tilde{n}_{\mu}\rangle
\end{split}
\end{equation}
We have:
\begin{equation}
\label{p1.eq15}
\tilde{n}_{l}=\big(\partial_{_{l}}{\phi}+i\chi_{_l}\phi,\partial_{_l}\alpha+d\chi_{_l}\big)
\end{equation}
for any $l$. Therefore,
\begin{equation}
\label{p1.eq16}
\begin{split}
\partial_{\lambda}\tilde{n}_{\beta}-\partial_{\beta}\tilde{n}_{\lambda}&=\Big(i\chi_{\beta}\partial_{\lambda}\phi-\chi_{\lambda}\partial_{\beta}\phi+i(\partial_{\lambda}\chi_{\beta}-i\partial_{\beta}\chi_{\lambda})\phi,d\partial_{_\lambda}\chi_{_\beta}-d\partial_{_\beta}\chi_{_\lambda}\Big)
\end{split}
\end{equation}
for any $\lambda,\beta$. Therefore,
\begin{equation}
\label{p1.eq18}
\begin{split}
\langle \tilde{n}_{\mu},\partial_{\lambda}\tilde{n}_{\beta}-\partial_{\beta}\tilde{n}_{\lambda}\rangle+\langle \tilde{n}_{\lambda},\partial_{\mu}\tilde{n}_{\beta}-\partial_{\beta}\tilde{n}_{\mu}\rangle&=\langle \tilde{n}_{\mu,\phi},i\chi_{\beta}\partial_{\lambda}\phi-i\chi_{\lambda}\partial_{\beta}\phi\rangle+\langle \tilde{n}_{\lambda,\varphi},i\chi_{\beta}\partial_{\mu}\phi-i\chi_{\mu}\partial_{\beta}\phi\rangle\\
&+\langle \tilde{n}_{\mu},(iA_{\lambda,\beta}\phi,dA_{\lambda,\beta})\rangle+\langle \tilde{n}_{\lambda},(iA_{\mu,\beta}\phi,dA_{\mu,\beta})\rangle
\end{split}
\end{equation}
where
\[
A_{rs}=\partial_r\chi_s-\partial_s\chi_r
\]
Since $\tilde{n}_l$'s satisfy the gauge orthogonality condition, we have:
\begin{equation}
\label{p1.eq19}
\langle \tilde{n}_{\mu},(iA_{\lambda,\beta}\phi,dA_{\lambda,\beta})\rangle=\langle \tilde{n}_{\lambda},(iA_{\mu,\beta}\phi,dA_{\mu,\beta})\rangle=0
\end{equation}
Therefore,
\begin{equation}
\label{p1.eq20}
\langle \tilde{n}_{\mu},\partial_{\lambda}\tilde{n}_{\beta}-\partial_{\beta}\tilde{n}_{\lambda}\rangle+\langle \tilde{n}_{\lambda},\partial_{\mu}\tilde{n}_{\beta}-\partial_{\beta}\tilde{n}_{\mu}\rangle=\langle \tilde{n}_{\mu,\varphi},i\chi_{\beta}\partial_{\lambda}\phi-i\chi_{\lambda}\partial_{\beta}\phi\rangle+\langle \tilde{n}_{\lambda,\varphi},i\chi_{\beta}\partial_{\mu}\phi-i\chi_{\mu}\partial_{\beta}\phi\rangle
\end{equation}
Now, by using \eqref{p1.eq14}, \eqref{p1.eq20} and the identity \eqref{p1.eq15} for $l=\mu,\lambda,\beta$, one sees that
\begin{equation}
\label{p1.eq21}\langle2\nabla_{\tilde{n}_{\mu}} \tilde{n}_{\lambda}-\partial_{\lambda}\tilde{n}_{\mu}-\partial_{\mu}\tilde{n}_{\lambda},\tilde{n}_{\beta}\rangle-\langle i\chi_{\lambda}\tilde{n}_{\mu,\varphi}+i\chi_{\mu}\tilde{n}_{\lambda,\varphi},\tilde{n}_{\beta,\varphi}\rangle=0
\end{equation} 
Therefore, by \eqref{p1.eq11}, one deduces that $\langle T,\tilde{n}_{\beta}\rangle =0$.
\end{proof}
\begin{lemma}
\label{c1.l1}
For any choice for $\Phi,A$, we always have
\begin{equation}
\label{p1.eq20.9}
\big(S_{\varphi}[\Phi,A],i\Phi\big)+\partial_0S_{a_0}[\Phi,A]-\sum_{j=1}^3\partial_j S_{a_j}[\Phi,A]=0
\end{equation}
\end{lemma}
\begin{proof}
We compute
\begin{equation}
\label{p1.eq21.new}
\big(S_{\varphi}[\Phi,A],i\Phi\big)=\big(D_0D_0\Phi-\sum_{j=1}^3D_jD_j\Phi,i\Phi\big)
\end{equation}
Also,
\begin{equation}
\label{p1.eq22}
\begin{split}
-\partial_0S_{a_0}+\sum_{j=1}^3\partial_jS_{a_j}[\Phi,A]&=\partial_0\big(\sum_{j=1}^n\partial_j\mathcal{F}_{j0}+(i\Phi,D_0\Phi)\big)+\sum_{j=1}^n\partial_j\big(\partial_0\mathcal{F}_{0j}-\sum_{k=1}^n\partial_{k}{\mathcal{F}_{kj}}-(i\Phi,D_j\Phi)\big)\\
&=\sum_{j=1}^n\partial_0\partial_j(\mathcal{F}_{j0}+\mathcal{F}_{0j})-\sum_{j=1}^n\sum_{k=1}^n\partial_j\partial_k\mathcal{F}_{kj}+(i\partial_0\Phi,D_0\Phi)+(i\Phi,\partial_0 D_0\Phi)\\&-\sum_{j=1}^n\big((i\partial_j\Phi,D_j\Phi)+(i\Phi,\partial_jD_j\Phi)\big)
\end{split}
\end{equation}
But, we have
\[
\mathcal{F}_{\alpha\beta}+\mathcal{F}_{\beta\alpha}=0
\]
for any $\alpha,\beta$. Therefore, by \eqref{p1.eq22}, we have:
\begin{equation}
\label{p1.eq23}
\begin{split}
-\partial_0S_{A_0}+\sum_{j=1}^3\partial_jS_{a_j}[\Phi,A]&=(i\partial_0\Phi,D_0\Phi)+(i\Phi,\partial_0 D_0\Phi)-\sum_{j=1}^3\big((i\partial_j\Phi,D_j\Phi)-(i\Phi,\partial_jD_j\Phi)\big)\\&=(i\Phi,D_0 D_0\Phi)-\sum_{j=1}^3(i\Phi,D_jD_j\Phi)
\end{split}
\end{equation}
\eqref{p1.eq20.9} follows from \eqref{p1.eq21.new} and \eqref{p1.eq23}.
\end{proof}
\begin{proposition}
\label{c2.p3}
The vector $T$ satisfies the gauge orthogonality condition for any $\tau,\zeta$.
\end{proposition}
\begin{proof}
Let
\begin{equation}
\label{p1.eq24}
\begin{split}
&\tilde{\Phi}_1(t,x)=\phi(y;q(\tau, \zeta))\\
&\tilde{A}_{1,j}(t,x)=\alpha_j(y;q(\tau, \zeta))\hspace{.3cm}j=1,2\\
&\tilde{A}_{1,j}(t,x)=\epsilon \tilde{a}_{{j}}(\tau,y,\zeta)\hspace{.3cm}j=0,3
\end{split}
\end{equation}
where $\tilde{a}_{{j}}$ for $j=0,3$ is as constructed before. Suppose that $T=(T_0,T_1,T_2)$ where $T_0$ is the Higgs section component and $T_1$ and $T_2$ correspond to the gauge field section components of $A_1$ and $A_2$. Let
\[
\tilde{A}_1=\big(\tilde{A}_{1,0},\tilde{A}_{1,1},\tilde{A}_{1,2},\tilde{A}_{1,3}\big)
\]
To calculate the error $S[\tilde{\Phi}_1,\tilde{A}_1]$, one can use the results for $S[\Phi_1,A_1]$ by assuming $\psi_1=0$.  According to \eqref{p1.eq2}, since $T$ does not depend on $\psi_1$, we have:
\begin{equation}
\label{p1.eq25}
S_{\varphi}[\tilde{\Phi}_1,\tilde{A}_1]=\epsilon^2 T_0+ O(\epsilon^4)
\end{equation}
Also, according to \eqref{p1.eq1.9}, we have:
\begin{equation}
\label{p1.eq26}
\begin{split}
S_{a_j}[\tilde{\Phi}_1,\tilde{A}_1]=O(\epsilon^3)\hspace{.4cm}j=0,3
\end{split}
\end{equation}
Suppose that the compnents of $T$ are denoted by $(T_0,T_1,T_2)$, where $T_0$ is the complex part and $T_1,T_2$ are the real ones. Using \eqref{p1.eq25} and \eqref{p1.eq26}, the coefficient of $\epsilon^2$ in 
\[
(S_{\varphi}[\tilde{\Phi}_1,\tilde{A}_1],i\tilde{\Phi}_1)+\partial_0S_{a_0}[\tilde{\Phi}_1,\tilde{A}_1]-\sum_{j=1}^3\partial_j S_{a_j}[\tilde{\Phi}_1,\tilde{A}_1]
\]
is 
\[
\big(T_{0},i\phi\big)-\sum_{j=1}^2\partial_j T_j
\]
But, according to lemma \ref{c1.l1},
\[
(S_{\varphi}[\tilde{\Phi}_1,\tilde{A}_1],i\tilde{\Phi}_1)+\partial_0 S_{a_0}[\tilde{\Phi}_1,\tilde{A}_1]-\sum_{j=1}^3\partial_j S_{a_j}[\tilde{\Phi}_1,\tilde{A}_1]=0
\]
Therefore $T$ satisfies the gauge orthogonality condition. 
\end{proof}
According to \eqref{p1.eq3} and \eqref{p1.eq4}, $T\in \mathcal{E}$. Therefore, by lemmas \ref{al2} and \ref{l40.4} in the appendix, we can find $\psi_1\in \mathcal{E}$ which satisfies the gauge orthogonality condition for every $\tau,\zeta$ and
solves the equation
\[
L\psi_1(.,\tau,\zeta)=-T(.,\tau,\zeta)
\]
for each $\tau$ and $\zeta$. Therefore, $\mathcal{L}\psi_1=-T$. According to \eqref{p1.eq2}, the introduced choices for $\psi_1$ and $\tilde{a}_{j}$ for $j=0,3$ ensure that 
\[
S_{\varphi}[\Phi_1,A_1]=O(\epsilon^4)\hspace{.3cm},\hspace{.3cm}S_{a_j}[\Phi_1,A_1]=O(\epsilon^4)\hspace{.4cm}j=1,2
\]
We already established that $S_{a_j}[\Phi_1,A_1]=O(\epsilon^3)$ for $j=0,3$. Noting that all of the components of the functions $f_2,f_3$ in $\eqref{p1.eq2}$ and $g_{j,2}$ and $g_{j,3}$ in \eqref{p1.eq1.9} belong to $\langle\mathcal{O}_1|\mathcal{F}_1\rangle$
where
\[
\mathcal{O}_1=\mathcal{R}\big[\{\partial_1,\partial_2,\{\tilde{\partial}_j\},\alpha_1,\alpha_2,\{\tilde{a}_j\},\phi,\bar{\phi},1,i\}\big]
\]
and
\[
\mathcal{F}_1=\mathcal{R}\big[\{D_1\phi,\overline{D_1\phi}, D_2\phi, \overline{D_2\phi}, |\phi|^2-1,\mathcal{F}_{12},\{\tilde{\mathcal{F}}_{\alpha\beta}\},\tilde{D}_j\phi,\overline{\tilde{D}_j\phi},\varphi_1,\overline{\varphi_1},a_{1,1},a_{1,2}\}\big]
\]
and
\[
\langle \mathcal{O}_1| \mathcal{F}_1 \rangle =\mathcal{R}\big[\{ab| a\in \mathcal{O}_1, b\in \mathcal{F}_1\}\big]
\]
where
\begin{equation}
\label{p1.eq26.2}
\tilde{\mathcal{F}}_{\alpha\beta}=
\begin{cases}
&\tilde{\partial}_{_{\alpha}}\tilde{a}_{_{\beta}}-\tilde{\partial}_{_{\beta}}\tilde{a}_{_{\alpha}}\hspace{1cm}\alpha,\beta\in\{0,3,\cdots,n\}\\
&\tilde{\partial}_{_{\alpha}}{a}_{_{\beta}}-\partial{_{\beta}}\tilde{a}_{_{\alpha}}\hspace{1cm}\alpha\notin\{1,2\}\hspace{.2cm},\hspace{.2cm}\beta\in\{1,2\}
\end{cases}
\end{equation}
Since $\mathcal{F}_1\subset \mathcal{E}$ and the functions $\alpha_1,\alpha_2,\tilde{a}_j,\phi$ and their multi-derivatives with respect to $y,\tau,\zeta$ are bounded, one can deduce that
\[
\langle\mathcal{O}_1|\mathcal{F}_1\rangle \subset \mathcal{E}
\]
Therefore
\[
f_2,f_3,g_{j,2},g_{j,3}\in \mathcal{E}
\]
Therefore, the statement holds for $m=1$.

Now, suppose that the statement holds for $m$ and
\begin{equation}
\begin{pmatrix}
S_{\varphi}\\
S_{a_j}
\end{pmatrix}_{j=1,2}[\Phi_m,A_m](t,x)=\sum_{i=m+1}^{3m}\epsilon^{2i}f_i(\tau,y,\zeta)
\end{equation}
and
\begin{equation}
\label{p1.eq26.2.new}
S_{a_j}[\Phi_m,A_m](t,x)=\sum_{i=m+1}^{3m}\epsilon^{2i-1}g_{j,i}(\tau,y,\zeta)
\end{equation}
for $j=0,3$. Let:
\begin{equation}
\label{p1.eq27}
\begin{split}
&\Phi_{m+1}(t,x)=\Phi_m(t,x)+\epsilon^{2m}\sum_{\mu=1}^{2N}c_{\mu}(\tau,\zeta)n_{\mu,\varphi}(y;\tau,\zeta)+\epsilon^{2m+2}{\varphi}_{m+1}(\tau,y,\zeta)\\
&A_{m+1,j}(t,x)=A_{m,j}(t,x)+\epsilon^{2m}\sum_{\mu=1}^{2N}c_{\mu}(\tau,\zeta)n_{\mu,j}(y;\tau,\zeta)+\epsilon^{2m+2}{a}_{j,m+1}(\tau,y,\zeta)\hspace{.4cm}j=1,2\\
&A_{m+1,j}(t,x)=A_{m,j}(t,x)+\epsilon^{2m+1}{a}_{j,m+1}(\tau,y,\zeta)\hspace{.3cm}j=0,3
\end{split}
\end{equation}
where the new terms are to be found. The procedure to construct the above object is to first let the functions $(c_{\mu})$ to be undetermined. Then, corresponding to $(c_{\mu})$, find the functions $a_{_{j,m+1}}$ for $j\in\{0,3\}$ such that the error term for $S_{a_j}[\Phi_{m+1},A_{m+1}]$ becomes of order $O(\epsilon^{2m+3})$. Then, to reduce the other error terms, an orthogonality condition to zero modes needs to be satisfied. This leads to a hyperbolic PDE for the functions $(c_{\mu})$ which is well-posed. After finding suitable $(c_{\mu})$, one can look back to the above process and find the other terms. 
\newline
Let
\begin{equation}
\label{p1.eq28.01.1}
E_{0}=\sum_{\mu}c_{\mu}n_{\mu,\varphi}
\end{equation}
and
\begin{equation}
\label{p1.eq28.01}
E_j=\sum_{\mu}c_{\mu}n_{\mu,j}\hspace{.3cm}j=1,2
\end{equation}
Then, we have:
\begin{equation}
\label{p1.eq28.1}
\begin{split}
S_{a_0}[\Phi_{m+1},A_{m+1}]&=-\epsilon^{2m+1}\big[\Delta_y a_{0,m+1}-|\phi|^2a_{0,m+1}+g_{_{0,m+1}}\big]\\
&-\epsilon^{2m+1}\Big[\big(i\phi,\tilde{D}_{0}E_0\big)+\big(iE_0,\tilde{D}_0\phi\big)-\tilde{\partial}_0\big(\sum_{\mu}c_{\mu}(\partial_1n_{\mu,1}+\partial_2n_{\mu,2})\big)\Big]\\
&+\epsilon^{2m+3}(...)+\cdots
\end{split}
\end{equation}
\begin{equation}
\label{p1.eq28.1.1}
\end{equation}
Using the fact that the zero modes $n_{\mu}$ satisfy the gauge orthogonality condition, we have:
\begin{equation}
\big(i\phi,\tilde{D}_{0}E_0\big)-\tilde{\partial}_0\big(\sum_{\mu}c_{\mu}(\partial_1n_{\mu,1}+\partial_2n_{\mu,2})\big)=(iE_0,\tilde{D}_0\phi)
\end{equation}
Therefore,
\begin{equation}
\label{p1.eq28.2}
\begin{split}
S_{a_0}[\Phi,A]=-\epsilon^{2m+1}\big[\Delta_y a_{0,m+1}-|\phi|^2a_{0,m+1}+2\sum_{\mu}c_{\mu}(in_{\mu,\varphi},\tilde{D}_0\phi)+g_{_{0,m+1}}\big]+\epsilon^{2m+3}(...)+
\end{split}
\end{equation}
Similarly,
\begin{equation}
\label{p1.eq28.3}
\begin{split}
S_{a_3}[\Phi,A]=-\epsilon^{2m+1}\big[\Delta_y a_{3,m+1}-|\phi|^2a_{3,m+1}+2\sum_{\mu}c_{\mu}(in_{\mu,\varphi},\tilde{D}_3\phi)+g_{_{3,m+1}}\big]+\epsilon^{2m+3}(...)+\cdots
\end{split}
\end{equation}
According to lemma \ref{al1} in the appendix, one can find 
\[
h_{j,\mu},k_{j}:\mathbb{R}^2\times [0,T_m)\times \mathbb{R}\to \mathbb{R}
\]
of class $\mathcal{E}$ for $j=0,3$, regardless of the choice for $(c_{\mu})_{_{\mu}}$, such that  
\begin{equation}
\label{p1.eq33}
a_{_{j,m+1}}=\sum_{\mu}c_{\mu}h_{j,\mu}+k_j\hspace{.4cm}j=0,3
\end{equation}
make the coefficient of $\epsilon^{2m+1}$ in the expressions \eqref{p1.eq28.2} and \eqref{p1.eq28.3} to vanish. (Add details here) Under this assumption for the functions $a_{_{j,m+1}}$ for $j\in\{0,3\}$, we have:
\begin{equation}
\label{p1.eq29}
\begin{pmatrix}
S_{\varphi}[\Phi_{m+1},A_{m+1}]\\
S_{a_{j}}[\Phi_{m+1},A_{m+1}]
\end{pmatrix}_{j=1,2}
=\epsilon^{2m+2}(\mathcal{L}\psi_{m+1}+S_{_{m+1}})+\epsilon^{2m+4}(...)+\cdots\hspace{.3cm}
\end{equation}
 and
\begin{equation}
\label{p1.eq29.1}
\psi_{m+1}=\big(\varphi_{_{m+1}},a_{_{1,m+1}},a_{_{2,m+1}}\big)
\end{equation}
and
\begin{equation}
\label{p1.eq30}
S_{_{m+1}}=(\tilde{\partial}_{0}^2-\tilde{\partial}_3^2)(\sum_{\mu}c_{\mu}n_{\mu})+p_{_{m+1}}(c_{\mu},\tilde{\partial}c_{\mu},\tau,\zeta)
\end{equation}
where $\tilde{\partial}$ denotes differentiation with respect to $\tau,\zeta$ and the function $p_{m+1}$ is a polynomial of degree at most $2$ with respect to $(c_{\mu})$ and $(\tilde{\partial}c_{\mu})$ with coefficients of class $\mathcal{E}$ and independent of $\psi_{m+1}$.
\begin{proposition}
\label{ao}
One can find smooth functions $(c_{\mu})$ with bounded multi-derivatives such that the vector $S_{_{m+1}}$ is orthogonal to the corresponding zero modes for every $\tau,\zeta$ with $\tau\le T_{m+1}$ for some $T_{m+1}>0$. 
\end{proposition}
\begin{proof}
The orthogonality conditions to the vectors $n_{\lambda}$ for $S_{_{m+1}}$ lead to the following equation:
\begin{equation}
\label{p1.eq34}
\tilde{\Box}(c_{\mu})_{\mu}=F((c_{\mu})_{\mu},(\tilde{\partial}c_{\mu})_{\mu},\tau,\zeta)
\end{equation}
where 
\[
\tilde{\Box}=\tilde{\partial}_0^2-\sum_{j=3}^n\tilde{\partial}_j^2
\] 
and for each $\tau,\zeta$, the function $F$ is linear or quadratic with respect to $(c_{\mu}),(\tilde{\partial}c_{\mu})$ with coefficients of class $\mathcal{E}$. This is a well-posed equation and by considering the zero initial condition for $(c_{\mu})_{\mu}$, one can find smooth solutions with bounded derivatives on the time interval $[0,T_{m+1}]$ for some $T_{m+1}>0$.
\end{proof}
Now, by choosing $(c_{\mu})_{\mu}$ as in the above proposition and the functions $a_{_{j,m+1}}$ for $j=0,3$ according to $\eqref{p1.eq33}$, the vector $S_{m+1}$ satisfies the orthogonality condition to the zero modes and one can use lemmas \ref{al2} and \ref{ln4} to find $\psi_{n+1}$ of class $\mathcal{E}$ such that
\[
L\psi_{_{m+1}}+S_{m+1}=0
\] 
\begin{proposition}
The vector $S_{_{m+1}}$ satisfies the gauge orthogonality condition.
\end{proposition}
\begin{proof}
Similar to the proof of proposition \ref{c2.p3}, we use lemma \ref{c1.l1}. Let
\begin{equation}
\label{p1.eq35}
\begin{split}
&\tilde{\Phi}_{m}(t,x)=\Phi_m(t,x)+\epsilon^{2m}\sum_{\mu=1}^{2N}c_{\mu}(\tau,\zeta)n_{\mu,\varphi}(y;\tau,\zeta)\\
&\tilde{A}_{m,j}(t,x)=A_{m,j}(t,x)+\epsilon^{2m}\sum_{\mu=1}^{2N}c_{\mu}(\tau,\zeta)n_{\mu,j}(y;\tau,\zeta)\hspace{.4cm}j=1,2\\
&\tilde{A}_{m,j}(t,x)=A_{m,j}(t,x)+\epsilon^{2m+1}{a}_{m+1,j}(y;\tau,\zeta)\hspace{.3cm}j=0,3,\cdots,n
\end{split}
\end{equation}
By using equation \eqref{p1.eq29} and comparing $S[\tilde{\Phi}_m,\tilde{A}_m]$ with $S[\Phi_{m+1},A_{m+1}]$ and noting that the correspondent of the $\psi_{m+1}$ term of $(\Phi_{m+1},A_{m+1})$ in $(\tilde{\Phi}_m,\tilde{A}_m)$ is zero and the vector $S_{m+1}$ is independent of $\psi_{m+1}$, we deduce that
\begin{equation}
\label{p1.eq35.1.1}
\begin{pmatrix}
S_{\varphi}[\tilde{\Phi}_m,\tilde{A}_m]\\
S_{a_j}[\tilde{\Phi}_m,\tilde{A}_m]
\end{pmatrix}_{_{j=1,2}}=\epsilon^{2m+2}S_{m+1}+\epsilon^{2m+4}(\cdots)+\cdots
\end{equation}
and according to \eqref{p1.eq28.2} and \eqref{p1.eq28.3}, we have:
\begin{equation}
\label{p1.eq36}
S_{a_j}[\tilde{\Phi}_m,\tilde{A}_m]=\epsilon^{2m+3}(\cdots)+\epsilon^{2m+5}(\cdots)+\cdots\hspace{.3cm}j=0,3
\end{equation}
Suppose that
\[
S_{m+1}=(S_{_{m+1,0}},S_{_{m+1,1}},S_{_{m+1,2}})
\]
where $S_{_{m+1,0}}$ denotes the complex part. According to \eqref{p1.eq35.1.1} and \eqref{p1.eq36}, the coefficient of $\epsilon^{2m+2}$ in 
\[
(i\tilde{\Phi}_m,S_{\varphi}[\tilde{\Phi}_m,\tilde{A}_m])+\partial_0 S_{a_0}[\tilde{\Phi}_m,\tilde{A}_m]-\sum_{j=1}^3\partial_j S_{a_j}[\tilde{\Phi}_m,\tilde{A}_m]
\]
is 
\[
(i\phi,S_{_{m+1,0}})-\sum_{j=1}^2\partial_jS_{_{m+1,j}}
\]
Therefore, according to lemma \ref{c1.l1}, the vector $S_{m+1}$ satisfies the gauge orthogonality condition.
\end{proof}
Now, according to the above lemma, one can use lemmas \ref{al2} and \ref{ln4} in the appendix to find $\psi_{m+1}$ of class
$\mathcal{E}$ which satisfies the gauge orthogonality condition for every $\tau,\zeta$ and
\[
L\psi_{_{m+1}}+S_{_{m+1}}=0
\]
Since $\psi_{m+1}$ is gauge orthogonal, then $\mathcal{L}\psi_{_{m+1}}=L\psi_{_{m+1}}$ and therefore
\[
\mathcal{L}\psi_{_
{m+1}}+S_{_{m+1}}=0
\]
and by equations \eqref{p1.eq28.2}, \eqref
{p1.eq28.3}, and \eqref{p1.eq29}, the expected order of error for $S[\Phi_{m+1},A_{m+1}]$ is obtained. The desired estimates for the error terms hold due to the induction hypothesis and the fact that 
\[
(\Phi_{m+1},A_{m+1})-(\Phi_{m},A_{m}\big)
\]
is of class $\mathcal{E}$. This finishes the proof of the theorem.
\end{proof}	
\section{Perturbation of the Approximate solution}
\label{per}

Suppose that $v=\big(\varphi,a\big)$ is an approximate solution for the Abelian Higgs model constructed in the first part of the project. Suppose that it is of the form
\begin{equation}
\label{eqp1}
\begin{split}
&\varphi (t,y,z)=\phi (y;q(\epsilon t, \epsilon z))+\epsilon^2{\hat{\varphi}}\\
&a_j(t,y,z)=\alpha_j (y;q(\epsilon t, \epsilon z))+\epsilon^2 {\hat{a}}_j\hspace{1cm}j=1,2\\
&a_0(t,y,z)=\epsilon \hat{a}_0(t,y,z)\\
&a_3(t,y,z)=\epsilon \hat{a}_3(t,y,z)
\end{split}
\end{equation}
defined on 
\[
\Big[0,\frac{T_1}{\epsilon}\Big)
\]
for some number $T_1$. We look for an honest solution $(\varphi+\tilde{\varphi},a+\tilde{a})$ for the Abelian Higgs equations. Let
\begin{equation}
\label{eq2}
u=(\tilde{\varphi},\tilde{a})
\end{equation}
and 
\begin{equation}
\label{eq3}
\tilde{a}=\sum_{j=0}^3\tilde{a}_jdx^j
\end{equation}
and 
\begin{equation}
\label{eq4}
\psi=(\tilde{\varphi}, \tilde{a}_1dx^1+\tilde{a}_2dx^2)
\end{equation}
Also, suppose that the error terms of the approximate solutions are denoted by
\begin{equation}
\label{eq5}
E=\big(E_{_{\varphi}},E_0,E_1.E_2,E_3\big)
\end{equation}
where $E_{\varphi}$ correspond to the Higgs section and $E_j$ correspond to the gauge field $A_j$. If the number of iterations is large enough, we can ensure that 
\begin{equation}
\label{er.ans}
||E(t,.)||_{H^4}+||E_t(t,.)||_{H^3}\le C\epsilon ^n
\end{equation}
for some $C$. The number $n$ depends on the number of iterations in the ansatz which will be figured out later.  
\subsection{Gauge Choice}
To find $u$, we need to assume a choice for gauge. Also, we need to justify, why we can assume such a choice:
\begin{proposition}
\label{sq.p1}
Suppose that 
\[
\big(\tilde{\varphi},\tilde{a}\big)\in C^0\big([0,T^{\prime});H^3\big)\cap \big(C^1\big([0,T^{\prime});H^2\big)\cap \big(C^2\big([0,T^{\prime});H^1\big)
\]
is such that
\[
w=\big(\varphi,a\big)+\big(\tilde{\varphi},\tilde{a}\big)
\]
is a solution for the system of equations
\begin{equation}
\label{sg.eq4}
\begin{split}
&S_{_{\varphi}}[w]=iG(\varphi+\tilde{\varphi})\\
&S_{_{a_j}}[w]=\partial_j G\hspace{.5cm}j=0,1,2,3\\
\end{split}
\end{equation}
where
\[
G=-\partial_0\tilde{a}_0+\partial_1\tilde{a}_1+\partial_2\tilde{a}_2+\partial_3\tilde{a}_3-\big(i\varphi,\tilde{\varphi}\big)
\]
Suppose that
\begin{equation}
\label{sg.eq4.01}
\tilde{\varphi}(0)=\tilde{a}_1(0)=\tilde{a}_2(0)=\tilde{a}_3(0)=(\partial_t \tilde{\varphi})(0)=(\partial_t \tilde{a})(0)=0
\end{equation}
and
\begin{equation}
\label{sg.eq4.1}
\Delta \tilde{a}_0(0)-|\varphi|^2(0)\tilde{a}_0(0)+E_{0}(0)=0
\end{equation}
Then, $v$ solves the Abelian Higgs model.
\end{proposition}
\begin{proof}
We have:
\begin{equation}
\label{eq5}
\Big(S_{_{\varphi}}[w],i(\varphi+\tilde{\varphi})\Big)=-\partial_0 S_{a_0}[w]+\sum_{_{k=1}}^3\partial_k S_{_{a_k}}[w]
\end{equation}
Therefore, by \eqref{sg.eq4}, we have:
\begin{equation}
\label{eq6}
\big(\partial_{tt}-\Delta\big)G=|\varphi+\tilde{\varphi}|^2G
\end{equation}
According to the initial conditions, we have:
\[
G(0)=0
\]
Note that
\begin{equation}
    \label{eq7}
    \begin{split}
    S_{a_0}[w]&=E_0+\Delta\tilde{a}_0-\sum_{j=1}^3\partial_t\partial_j \tilde{a}_j+\Big(i\varphi,\partial_{t}\tilde{\varphi}\Big)\\
    &-\tilde{a}_0|\varphi|^2-2(\varphi,a_0\tilde{\varphi})+(i\tilde{\varphi},\partial_t \varphi)\\
    &+(i\tilde{\varphi},\partial_t \tilde{\varphi})-2(\varphi,\tilde{a}_0\tilde{\varphi})-a_0|\tilde{\varphi}|^2-\tilde{a}_0|\tilde{\varphi}|^2
    \end{split}
\end{equation}
Therefore, together with equation \eqref{sg.eq4.1} implies that
\[
S_{a_0}[w](0)=0
\]
and therefore, by equation \eqref{sg.eq4}
\[
\partial_tG(0)=0
\]
Therefore, \eqref{eq6} implies that $G=0$ for all times and $v$ solves the Abelian Higgs model.
\end{proof}
\subsection{Rewriting the equations}
According to proposition \ref{sq.p1}, it suffices to find $u=(\tilde{\varphi},\tilde{a})$ such that the following is satisfied:
\begin{equation}
\label{se.eq4}
\begin{split}
&S_{_{\varphi}}[(\varphi+\tilde{\varphi},a+\tilde{a})]=iG(\varphi+\tilde{\varphi})\\
&S_{_{a_j}}[\varphi+\tilde{\varphi},a+\tilde{a}]=\partial_j G\hspace{.5cm}j=0,1,2,3\\
\end{split}
\end{equation}
where
\begin{equation}
\label{se.eq4.1001}
G=-\partial_0 \tilde{a}_0+\sum_{j=1}^3\partial_j \tilde{a}_j -(i\varphi,\tilde{\varphi})
\end{equation}
where the initial conditions are:
\begin{equation}
\label{sg.eq4.02}
\tilde{\varphi}(0)=\tilde{a}_1(0)=\tilde{a}_2(0)=\tilde{a}_3(0)=(\partial_t \tilde{\varphi})(0)=(\partial_t \tilde{a})(0)=0
\end{equation}
and
\begin{equation}
\label{sg.eq4.1002}
\Delta\tilde{a}_0(0)-|\varphi|^2(0)\tilde{a}_0(0)+E_{0}(0)=0
\end{equation}
The existence of $\tilde{a}_0$ satisfying equation \eqref{sg.eq4.1002} is provided by lemma \ref{3d.l1} in the appendix.
\\
To write down equation \eqref{se.eq4}, first we study the linearization of the equations. To linearize the equations, we use the following calculations:
\begin{lemma}
\label{2.3.l1}
We have:
\begin{equation}
\label{2.3.l1.eq1}
(D_{_{A+\tilde{A}}})^2(\Phi+\tilde{\Phi})=D_A^2\Phi+D_A^{2}\tilde{\Phi}-2i\tilde{A}D_A\Phi-i(\nabla.\tilde{A})\Phi-D_A(i\tilde{A}\tilde{\Phi})-i\tilde{A}D_{A}\tilde{\Phi}-(\tilde{A})^2(\Phi+\tilde{\Phi})
\end{equation}
and
\begin{equation}
\label{2.3.l1.eq2}
(|\Phi+\tilde{\Phi}|^2-1)(\Phi+\tilde{\Phi})=(|\Phi|^2-1)\Phi+\Big(2(\Phi,\tilde{\Phi})\Phi+(|\Phi|^2-1)\tilde{\Phi}\Big)\\
+2(\Phi,\tilde{\Phi})\tilde{\Phi}+|\tilde{\Phi}|^2\tilde{\Phi}
\end{equation}
and
\begin{equation}
\label{2.3.l1.eq3}
\begin{split}
(i(\Phi+\tilde{\Phi}),D_{A+\tilde{A}}(\Phi+\tilde{\Phi}))&=(i\Phi,D_{A}\Phi)+(i\tilde{\Phi},D_A\Phi)+(i\Phi,D_{A}\tilde{\Phi})-|\Phi|^2\tilde{A}\\
&+(i\tilde{\phi},D_A\tilde{\phi})-2(\phi,\tilde{\phi})\tilde{A}-|\tilde{\phi}|^2\tilde{A}
\end{split}
\end{equation}
\end{lemma}
\begin{proof}
The proof is by straightforward calculations.
\begin{equation}
\label{p1.l1.eq1}
\begin{split}
(D_{A+\tilde{A}})^2(\Phi+\tilde{\Phi})&=(D_{A}-i\tilde{A})^2(\Phi+\tilde{\Phi})
\\
&=(D_A^2-i\tilde{A}D_{A}-iD_{A}.\tilde{A}-(\tilde{A})^2)(\Phi+\tilde{\Phi})
\\
&=D_A^2\Phi+D_A^2\tilde{\Phi}-i\tilde{A}D_{A}\Phi-i\tilde{A}D_A\tilde{\Phi}\\
&-iD_{A}(\tilde{A}\Phi)-iD_{A}(\tilde{A}\tilde{\Phi})-(\tilde{A})^2(\Phi+\tilde{\Phi})
\\
&=D_A^2\Phi+D_A^2\tilde{\Phi}-2i\tilde{A}D_{A}\Phi-i(\nabla. \tilde{A})\Phi
\\
&-iD_{A}(\tilde{A}\tilde{\Phi})-i\tilde{A}D_{A}\tilde{\Phi}-(\tilde{A})^2(\Phi+\tilde{\Phi})
\end{split}
\end{equation}
Identity \eqref{2.3.l1.eq2} is simple. To check \eqref{2.3.l1.eq3}, we use the fact that $(iv,iw)=(v,w)$ and proceed as follows:
\begin{equation}
\label{p1.l1.eq3.1}
\begin{split}
(i(\Phi+\tilde{\Phi}),D_{A+\tilde{A}}(\Phi+\tilde{\Phi}))&=(i(\Phi+\tilde{\Phi}),(D_A-i\tilde{A})(\Phi+\tilde{\Phi}))
\\&=(i(\Phi+\tilde{\Phi}), D_{A}\Phi+D_{A}\tilde{\Phi}-i\tilde{A}\Phi-i\tilde{A}\tilde{\Phi})
\\&=
(i\Phi,D_{A}\Phi)+(i\tilde{\Phi},D_A\Phi)+(i\Phi,D_{A}\tilde{\Phi})-|\Phi|^2\tilde{A}\\
&+(i\tilde{\Phi},D_{A}\tilde{\Phi})-2(\Phi,\tilde{\Phi})\tilde{A}-|\tilde{\Phi}|^2\tilde{A}
\end{split}
\end{equation} 
\end{proof}
According to lemma \ref{2.3.l1}, one can observe that
\begin{equation}
\label{2.3.eq1.9}
S(\Phi+\tilde{\Phi},A+\tilde{A})=S(\Phi,A)+\mathcal{H}[\Phi,A](\tilde{\Phi},\tilde{A})+N(\Phi,\tilde{\Phi},A,\tilde{A})
\end{equation}
where
\begin{equation}
\label{2.3.eq1.91}
\mathcal{H}[\Phi,A](\tilde{\Phi},\tilde{A})=\mathcal{G}[\Phi+\tilde{\Phi},A][\tilde{\Phi},\tilde{A}]
+\begin{pmatrix}
i\Phi\big(\nabla.\tilde{A}-\big(i\Phi,\tilde{\Phi}\big)\big)\\
\partial_i\big(\nabla.\tilde{A}-\big(i\Phi,\tilde{\Phi}\big)\big)
\end{pmatrix}
\end{equation}
where
\begin{equation}
\label{2.3.eq1.91.01}
\mathcal{G}[\Phi,A][\tilde{\Phi},\tilde{A}]=
\begin{pmatrix}
&\Box_{A}\tilde{\Phi}-2i(\tilde{A}_0D_0{\Phi}-\sum_{i=1}^3\tilde{A}_iD_i{\Phi})+\frac{1}{2}\big(3|\Phi|^2-1\big)\tilde{\Phi}\\
&\Box\tilde{A}_i+|\Phi|^2\tilde{A}_i-2\big(i\tilde{\Phi},D_i\Phi\big)
\end{pmatrix}
\end{equation}
where
\[
\Box_A=D_0D_0-\sum_{j=1}^3D_jD_j
\]
and $N$ consists of nonlinear terms with respect to $\tilde{\Phi}$ and $\tilde{A}$.
\\
To write equations \eqref{se.eq4}, we use equations \eqref{2.3.eq1.9}, \eqref{2.3.eq1.91} and \eqref{2.3.eq1.91.01}, and obtain that
\begin{equation}
\label{2.3.eq1.93}
\begin{split}
S_{\varphi}(\varphi+\tilde{\varphi},a+\tilde{a}))&=S_{\varphi}\big((\phi+\epsilon^2 \hat{\varphi})+\tilde{\varphi},a+\tilde{a}\big)
\\
&=S_{\varphi}(\varphi,a)+(\partial_t-i{a}_0)^2\tilde{\varphi}-(\partial_z-i{a}_3)^2\tilde{\varphi}\\
&-\sum_{j=1}^2(\partial_j-i\alpha_j-i\epsilon^2\hat{a}_j)^2\tilde{\varphi}
\\
&-2i\tilde{a}_0(\partial_t-i{a}_0)(\phi+\epsilon^2 \hat{\varphi})+2i\tilde{a}_3(\partial_z-i{a}_3)(\phi+\epsilon^2 \hat{\varphi})
\\
&+2i\sum_{j=1}^2(\tilde{a}_j(\partial_j-i\alpha_j-i\epsilon^2\hat{a}_j)(\phi+\epsilon^2 \hat{\varphi}))
\\
&+\frac{1}{2}(3|(\phi+\epsilon^2 \hat{\varphi})|^2-1)\tilde{\varphi}+{N}_0+i\varphi G
\\
&=\partial_t^2 \tilde{\varphi}-2ia_0\partial_t\tilde{\varphi}-\partial_z^2 \tilde{\varphi}+2ia_3\partial_z\tilde{\varphi}
\\
&+L_{\varphi}[\phi,\alpha][\psi]+E_0+N_0
\\&-2i\tilde{a}_0D_0\varphi+2i\tilde{a}_3D_3\varphi
\\&+R_0u+i\varphi G
\end{split}
\end{equation}
where
\begin{equation}
\label{new.2.3.neq1}
\begin{split}
R_0u&=-i(\partial_t a_0)\tilde{\varphi}-a_0^2\tilde{\varphi}+i(\partial_z a_3)
\tilde{\varphi}+a_3^2\tilde{\varphi}\\
&+\sum_{j=1}^2\Big[2i\epsilon^2\hat{a}_j\partial_j\tilde{\varphi}+i\epsilon^2(\partial_j\hat{a}_j)\tilde{\varphi}-2\epsilon^2\alpha_j\hat{a}_j\tilde{\varphi}+\epsilon^4\hat{a}_j^2\tilde{\varphi}\Big]+3\epsilon^2(\phi,\hat{\varphi})\tilde{\varphi}+\frac{3}{2}\epsilon^4|\hat{\varphi}|^2\tilde{\varphi}\\
&+2i\sum_{j=1}^2\epsilon^2\tilde{a}_j\Big(\partial_j \hat{\varphi}-i\hat{a}_j\phi-i\alpha_j\hat{\varphi}-i\epsilon^2\hat{a}_j\hat{\varphi}\Big)
\end{split}
\end{equation}
and $N_0$ consists of nonlinear terms and $E_0$ corresponds to the error of the approximate solution.
\\
Now, for $j=1,2$, we have
\begin{equation}
\label{new.eq2.3.neq2}
\begin{split}
S_{a_j}[\varphi+\tilde{\varphi},a+\tilde{a}]&=S_{a_j}[\varphi,a]+\partial_jG+\big(\partial_t^2-\partial_{z}^2-\Delta+|\phi+\epsilon^2\hat{\varphi}|^2\big)\tilde{a}_j\\
&-2(i\tilde{\varphi},(\partial_j-i\alpha_j-i\epsilon^2\hat{a}_j)(\phi+\epsilon^2\hat{\varphi}))+N_j\\
&=\big(\partial_t^2-\partial_{z}^2-\Delta+|\phi|^2\big)\tilde{a}_j-2(i\tilde{\varphi},D_j\phi)\\
&+S_ju+N_j+E_j+\partial_j G\\
&=(\partial_t^2-\partial_z^2)\tilde{a}_j-L_{j}[\phi,\alpha](\psi)+S_ju+N_j+E_j+\partial_j G\
\end{split}
\end{equation}
where
\begin{equation}
\label{new.eq7.5}
S_ju=-2\epsilon^2\Big(i\tilde{\varphi},\partial_j \hat{\varphi}-i\hat{a}_j\phi-i\alpha_j\hat{\varphi}-\epsilon^2\hat{a}_j\hat{\varphi}\Big)+\epsilon^2\tilde{a}_j\Big(2(\phi,\hat{\varphi})+\epsilon^2|
\hat{\varphi}|^2\Big)
\end{equation}
and $N_j,E_j$ correspond to the nonlinear terms and error of the approximate solution, respectively.
\\
For $j=0,3$, we have:
\begin{equation}
\label{new.eq2.3.neq2.1}
\begin{split}
S_{a_j}[\varphi+\tilde{\varphi},a+\tilde{a}]&=S_{a_j}[\varphi,a]+\partial_jG+\big(\partial_t^2-\partial_{z}^2-\Delta+|\phi+\epsilon^2\hat{\varphi}|^2\big)\tilde{a}_j\\
&-2(i\tilde{\varphi},(\partial_j-i{a}_j)(\phi+\epsilon^2\hat{\varphi}))+N_j\\
&=\big(\partial_t^2-\partial_{z}^2-\Delta+|\phi|^2\big)\tilde{a}_j-2(i\tilde{\varphi},D_j\phi)\\
&+S_ju+N_j+E_j+\partial_j G\\
\end{split}
\end{equation}
where
\begin{equation}
\label{neq.eq1001}
S_ju=2\epsilon^2 (\phi,\hat{\varphi})\tilde{a}_j-2\epsilon^2(i\tilde{\varphi},D_j\hat{\varphi})
\end{equation}
for $j=0,3$ and $N_j,E_j$ are the nonlinearity and the error of the approximate solution. Therefore, one can write down the equations \eqref{se.eq4} for $u=(\tilde{\varphi},\tilde{a})$ in the following form:
\begin{equation}
\label{eq7.1}
u_{tt}+Mu+Pu+N+E=0
\end{equation}
where
\begin{equation}
\label{eq1.main1}
Mu=
\begin{pmatrix}
-\partial_z^2\psi+L[\phi,\alpha]\psi\\
\\
-\Delta \tilde{a}_0+|\phi|^2\tilde{a}_0\\
\\
-\Delta \tilde{a}_3+|\phi|^2\tilde{a}_3
\end{pmatrix}
\end{equation}
and
\[
Pu=\begin{pmatrix}
\begin{pmatrix}
Ru\\
0
\end{pmatrix}\\
\\
2(i\tilde{\varphi},D_0\phi)\\
\\
2(i\tilde{\varphi},D_3\phi)
\end{pmatrix}+
\begin{pmatrix}
\begin{pmatrix}
R_0u
\\
Su
\end{pmatrix}
\\
\\
S_0u
\\
\\
S_3u
\end{pmatrix}
\]
where
\begin{equation}
\label{eq7.3}
\begin{split}
Ru&=-2ia_0\partial_t \tilde{\varphi}+2ia_3\partial_z \tilde{\varphi}-2i\tilde{a}_0D_0\varphi+2i\tilde{a}_3D_3\varphi
\end{split}
\end{equation}
where 
\begin{equation}
\label{new.eq7.4}
Su=(S_1u,S_2u)
\end{equation}
where $R_0u$ and $S_ju$ for $j=0,1,2,3$ are as before and $N,E$. consist of nonlinear terms and the error of the approximate solution respectively. The quantity $Ru$ is of the form $\epsilon F(u,Du)$ for some linear function $F$ and the
quantities $R_0u, Su, S_0u$ and $S_3u$ are of the form $\epsilon^2 G(u,Du)$ for some linear function $G$
\subsection{local existence theorems and a priori estimates}
\begin{theorem}
\label{thm2.loc}
Consider the approximate solution $v=(\tilde{\varphi},\tilde{a})$ described in the beginning of section \ref{per} with error $E$ satisfying conditions \eqref{er.ans}. Suppose that
\[
0\le s\le \frac{T_1}{\epsilon}
\] 
We consider equation \eqref{eq7.1} for the approximate solution . Consider the starting point to be at $t=s$. For any $b>0$, there exists $\delta>0$ such that if $\epsilon<\delta$ and the initial data for equations for $u$ satisfies
\[
(u(s,.),\partial_t u(s,.))\in H^3(\mathbb{R}^3)\oplus H^2(\mathbb{R}^3)
\]
and
\begin{equation}
\label{b.cs1}
a=||u(s,.)||_{H^3}+||\partial_t u(s,.)||_{_{H^2}}\le b
\end{equation}
there exists $T_2$ depending only on the function
\[
\tilde{v}(t,x,z)=v(\epsilon t,x,\epsilon z)
\]
for the approximate solution $v$ and in particular independent of $\epsilon$, and a solution
\[
u\in E_1=C^0([s,s+T_2];H^3)\cap C^1([s,s+T_2];H^2)\cap C^2([s,s+T_2];H^1)
\]
to equation \eqref{eq7.1}. Furthermore, it satisfies the estimate
\[
||u(t,.)||_{H^3}+||\partial_t u||_{H^2}\le Ca +{\epsilon}^n
\]
for some $C>0$.
\end{theorem}
\begin{proof}
Suppose that $\epsilon<\frac{1}{2}$.
Let
\[
w=\begin{pmatrix}
u\\
\partial_t u\\
\partial_{x_1}u\\
\partial_{x_2}u\\
\partial_{x_3}u
\end{pmatrix}
\]
Then, the equations for $w$ can be written in the form 
\begin{equation}
\label{thm1eq1}
\partial_{t}w+\sum_{j=1}^3A_j\partial_{x_j}w+F(t,x,w)=f(t,x)
\end{equation}
for some symmetric matrices $B_j$ with constant entries so that the operator
\[
K=\frac{\partial}{\partial_t}+\sum_{j=1}^3B_j\frac{\partial}{\partial x_j}
\]
is symmetric hyperbolic. Also $F$ involves the first order and nonlinear terms of equation \eqref{eq7.1} and $f$ is associated with the error $E$ of the approximate solution and by the property \eqref{er.ans} of error, we have
\begin{equation}
\label{f.error}
f(.)\in L^1_{_{loc}}\Big{[}\big{[}0,\frac{T_1}{\epsilon});H^2(\mathbb{R}^3)\Big{]}
\end{equation}
for some $C>0$. Now, we use the so called Friedrichs theorem from page 36, chapter 2 of \cite{Rauch}.
\begin{theorem}[Friedrichs]
\label{p1.thm1}
Suppose that $L$ is a symmetric hyperbolic operator and $g\in H^r(\mathbb{R}^d)$ and $f\in L^1_{loc}(\mathbb{R};H^r(\mathbb{R}^d))$ for some $r>0$. Then, there is one and only one solution $u\in C(\mathbb{R};H^r(\mathbb{R}^d))$ to the initial value problem
\begin{equation}
\label{eq1.p1.thm1}
\begin{split}
&Lu=f
\\
&u(0)=g
\end{split}
\end{equation}
In addition, there is a constant $C=C(L,r)$ independent of $f,g$ so that for all $f,g>0$, there holds:
\begin{equation}
\label{eq2.p1.thm1}
||u(t)||_{_{H^r(\mathbb{R}^d)}}\le Ce^{Ct}||u(0)||_{_{H^r(\mathbb{R}^d)}}+\int_{0}^t Ce^{C(t-\sigma)}||f(\sigma)||_{_{H^r(\mathbb{R}^d)}}d\sigma
\end{equation}
\end{theorem}
Let $r=2$ and $L=K$ in the setting of the above theorem. By using the Sobolev embedding and the fact that the coefficients of $F$ have bounded derivatives of any order and by \eqref{f.error}, we can use the above theorem to find a sequence 
\[
w_{m}\in C\big[[s,\frac{T_1}{\epsilon});H^2\big]
\]
such that
\[
\partial_{t}w_{m+1}+\sum_{j=1}^3B_j\partial_{x_j}w_{m+1}+F(t,x,w_m)=f(t,x)
\]
and $w_0=0$. Suppose that $C_1$ is the constant provided by the Friedrich's theorem. Suppose that
\[
R=2aC_1+\epsilon^{n}
\]
There exists a polynomial $p$ of degree $3$ such that if 
\[
||w(t,.)||_{H^2}\le R
\]
then
\begin{equation}
\label{eq2.p1.thm.n0}
||F(w(t,.))||_{H^2}\le p(R)
\end{equation}
This is because the nonlinearity in the equation \eqref{eq7.1} is of order 3. By assuming that $w_0=0$, we will find $T_2$ such that for any $t\in [s,s+T_2]$, we have
\begin{equation}
\label{eq2.p1.thm1.n1}
||w_m(t,.)||_{H^2}\le R
\end{equation}
Note that since $R=2aC_1+\epsilon^n$ and $a\le b$, there exists $T_2$ such that if $(t-s)\le T_2$, then
\begin{equation}
\label{eq2.p1.thm1.n3}
C_1 e^{C_1(t-s)}a +\frac{t-s}{C_1}(e^{C_1(t-s)}-1)(p(R)+C^{\prime}\epsilon^n) \le R
\end{equation}
Suppose that for some $m$, we know that \eqref{eq2.p1.thm1.n1} holds for any $t\in (s,s+T_2)$. By equation \eqref{eq2.p1.thm1} and using \eqref{eq2.p1.thm.n0}, we have: 
\begin{equation}
\label{eq2.p1.thm1.n2}
\begin{split}
||w_{m+1}(t,.)||_{H^2}\le &C_1 \Big(e^{C_1(t-s)}a+\int_{s}^te^{C_1(t-\sigma)}(P(R)+C\epsilon^n)d\sigma\Big)
\\
& \le C_1 e^{C_1(t-s)}a +\frac{t-s}{C_1}(e^{C_1(t-s)}-1)(p(R)+C^{\prime}\epsilon^n)
\end{split}
\end{equation}
\eqref{eq2.p1.thm1.n2} and \eqref{eq2.p1.thm1.n3} imply that if $(t-s)\le T_2$, then we have
\begin{equation}
\label{eq2.p1.thm1.n4}
||w_{m+1}(t,.)||_{H^2}\le R
\end{equation}
and by induction, the above bound holds for any $m$.
Now, we claim that the sequence $\{w_m\}_{m=1}^{\infty}$ converges in $C\big([s,s+T_2);H^2\big)$. Note that
\begin{equation}
\label{eq2.p1.thm1.n5}
K(w_{m+1}-w_{m})+F(t,x,w_m)-F(t,x,w_{m-1})=0
\end{equation}
But, since $||w_m(t,.)||_{H^2}\le R$, there exists a number $\lambda>0$ such that for any $m$,
\begin{equation}
\label{eq2.p1.thm1.n6}
||F(t,.,w_m)-F(t,.,w_{m-1})||_{H^2} \le \lambda ||w_m(t,.)-w_{m-1}(t,.)||_{H^2}
\end{equation}
Therefore, by the Friedrich's theorem, we have:
\begin{equation}
\label{eq2.p1.thm1.n7}
||w_{m+1}(t,.)-w_{m}(t,.)||_{H^2}\le C_1\int_{s}^te^{C_1(t-\sigma)}||w_{m}(t,.)-w_{m-1}(t,.)||_{H^2}d\sigma
\end{equation}
Let
\begin{equation}
\label{e2.p1.thm1.n7.1}
\begin{split}
&M_1=\sup_{[s,s+T_1]}||w_1(t,.)-w_{0}(t,.)||_{H^2}
\\
&M_2=C_1\lambda e^{C_1T_2}
\end{split}
\end{equation}
By induction on $m$, we have:
\begin{equation}
\label{e2.p1.thm1.n8}
||w_{m+1}(t,.)-w_{m}(t,.)||_{H^2}\le M_1\frac{(M_2t)^{m-1}}{(m-1)!}
\end{equation}
Therefore, the sequence $\{w_m\}_{m=0}^{\infty}$ is Cauchy in $C\big([s;s+T_2);H^2\big)$ and the limit satisfies the equation.
\end{proof}
\subsection{Some quantities and estimates}
\label{soq.bo}
In this section, we are going to introduce some quantities and estimate their evolution over the course of time. In subsections \ref{sub.so1} and \ref{sec.bo}, we will explain how these estimates allow us to do a bootstrap argument.
\subsubsection{Some quantities}
\label{bs.eq1}
The new quantities that we want to consider are as follows:
\begin{itemize}
\item 
Let:
\begin{equation}
S=Mu
\end{equation}
and
\begin{equation}
\label{eq1.defq1}
Q_1(t)=\int_{\mathbb{R}^3}|u_t|^2+(Mu,u)
\end{equation}
and
\begin{equation}
\label{eq1.defq2}
Q_2(t)=\int_{_{\mathbb{R}^3}}|S_t|^2+(MS,S)
\end{equation}
\item 
For each $(t,z)$, suppose that
\begin{equation}
\label{dec.in.eq1}
\psi(t,y,z)=\psi_1(t,y,z)+\sum_{\mu}c_{\mu}(t,z)n_{\mu}(y;t,z)
\end{equation}
where 
\begin{equation}
\label{dec.in.eq2}
\psi_1(t,.,z)\perp T_{(q(\epsilon t, \epsilon z))} M_N
\end{equation}
for any $(t,z)$ where $q$ is the wave map in theorem \ref{thm2.main}.
\item 
Let:
\begin{equation}
\label{eq5.5}
\tilde{a}_j=f_j+\partial_0 \chi_j
\end{equation}
for $j=0,3$, where $\chi_j$ solves the following equation:
\begin{equation}
\label{eq5.6}
\big(\Delta-|\phi|^2\big)\chi_j=\partial_0 \tilde{a}_j
\end{equation}
The existence of such $\chi_j\in H^3(\mathbb{R}^3)$ is guarantied by lemma \ref{3d.l1} and remark \ref{reg.3d} in the appendix.
\end{itemize}
\subsubsection{Estimates for the above quantities}
In this section, we will describe how the quantities $Q_1$, $Q_2$, $||c_{\mu}||_{_{H^3}}$ and $||\partial_t c_{\mu}||_{_{H^2}}$ change over time, approximately.
\begin{itemize}
\item 
\begin{proposition}
\label{eq.prop1}
There holds:
\begin{equation}
\label{eq.prop1.eqb5}
\begin{split}
(Q_1+Q_2)^{\prime}(t)\le C \big(\epsilon||u||_{H^3}+||N||_{H^2}+||E||_{H^2}+\epsilon ||u_t||_{H^2})\big(\epsilon ||u||_{H^3}+||u_t||_{H^2}\big)+\epsilon ||u||_{H^3}^2
\end{split}
\end{equation}
\end{proposition}
\begin{proof}
\begin{equation}
\label{eqb2}
Q_1^{\prime}=2\int_{\mathbb{R}^3}\big((u_{tt},u_t)+(Mu,u_t)\big)+(M_tu,u)
\end{equation}
So
\begin{equation}
\label{eqb2.1}
Q_1^{\prime}(t)=2(Pu+N+E,u_t)+(M_tu,u)
\end{equation}
and
\begin{equation}
\label{eqb3}
Q_2^{\prime}=2\Big(\int_{\mathbb{R}^3}(S_{tt},S_t)+(MS,S_t)\Big)+(M_tS,S)
\end{equation}
so
\begin{equation}
\label{eqb4}
Q_2^{\prime}(t)=2\big(MPu+MN+ME+2M_tu_t+M_{tt}u, (Mu)_t\big)+(M_tMu,Mu)
\end{equation}
So
\begin{equation}
\label{eqb5}
\begin{split}
(Q_1+Q_2)^{\prime}(t)\le &C ||Pu||_{L^2}|| ||u_t||_{L^2}+||N+E||_{L^2}||u_t||_{L^2}+C\epsilon ||u||_{H^1}||u||_{L^2}\\
+&C\big(||Pu+N+E||_{H^2}+\epsilon||u_t||_{H^1}+{\epsilon}^2||u||_{H^1}\big)\big(\epsilon ||u||_{H^1}+||u_t||_{H^2}\big)+\epsilon||u||_{H^3}^2
\\ 
\le & C \big(\epsilon||u||_{H^3}+||N||_{H^2}+||E||_{H^2}+\epsilon ||u_t||_{H^2})\big(\epsilon ||u||_{H^3}+||u_t||_{H^2}\big)+\epsilon ||u||_{H^3}^2
\end{split}
\end{equation}
\end{proof}
\item 
\begin{proposition}
\label{bs.cf.prop}
For $j=0,3$, we have:
\begin{equation}
\label{eq5.601}
-\Delta f_j+|\phi|^2f_j= (\partial_0|\phi|^2)\chi_j+2(i\tilde{\varphi},D_j\phi)+N_j+E_j
\end{equation}
for $j=0,3$, where $N_j$ and $E_j$ are the corresponding components of $N$ and $E$. 
\end{proposition}
\begin{proof}
According to the gauge field section of equation \eqref{eq7.1} and equations \eqref{eq5.5}, \eqref{eq5.6}, we have:
\begin{equation}
\label{eq5.61}
\begin{split}
\big(-\Delta+|\phi|^2\big)f_j&=\big(-\Delta+|\phi|^2\big)(\tilde{a}_j-\partial_0 \chi_j)\\
&
=\big(-\Delta+|\phi|^2\big)\tilde{a}_j
\\
&-\big(-\Delta+|\phi|^2\big)(\partial_0\chi_j)
\\
&=-\partial_t^2\tilde{a}_j+2(i\tilde{\varphi},D_j\phi)+N_j+E_j
\\
&-\big(-\Delta+|\phi|^2\big)(\partial_0\chi_j)
\\
&=-\partial_t^2\tilde{a}_j+2(i\tilde{\varphi},D_j\phi)+N_j+E_j\\
&-\partial_0\Big(\big(-\Delta+|\phi|^2\big)(\chi_j)\Big)+(\partial_0|\phi|^2)(\chi_j)
\\
&=-\partial_t^2\tilde{a}_j+2(i\tilde{\varphi},D_j\phi)+N_j+E_j+\partial_0^2\tilde{a}_j+(\partial_0|\phi|^2)\chi_j
\\&=(\partial_0|\phi|^2)(\chi_j)+2(i\tilde{\varphi},D_j\phi)+N_j+E_j
\end{split}
\end{equation}
\end{proof}
\item 
\begin{proposition}
\label{int.prop1}
There holds:
\begin{equation}
\label{eqb13.1.1}
\partial_{tt}c_{\mu}-\partial_{zz}c_{\mu}=\partial_0h_{0}+\partial_{3}h_3+h
\end{equation}
where
\begin{equation}
\label{eqb11}
h_0=\big(2ia_0\tilde{\varphi}+2i\chi_0D_0\varphi-2i\chi_3D_3\varphi,n_{_{\mu,\varphi}}\big)+2\big(\psi,(n_{_{\mu}})_{_t}\big)
\end{equation}
and
\begin{equation}
\label{eqb12}
h_3=-\big(2ia_3\tilde{\varphi},n_{_{\mu,\varphi}}\big)-2\big(\psi,(n_{\mu})_{_z}\big)
\end{equation}
and
\begin{equation}
\label{eqb13}
\begin{split}
h=&-(R_0 u,n_{\mu,\varphi})-\sum_{j=1}^2(S_ju, n_{\mu,a_j})\\
&-(N,n_{\mu})-(E,n_{\mu})-(\psi,(n_{\mu})_{tt})+(\psi,(n_{\mu})_{zz})
\\
&+\big(2if_0D_0\varphi-2if_3D_3\varphi,n_{_{\mu,\varphi}}\big)\\
&-\big(2i(\partial_0 a_0)\tilde{\varphi},n_{_{\mu,\varphi}}\big)
-\big(2ia_0\tilde{\varphi},\partial_0 n_{_{\mu,\varphi}}\big)
\\
&+\big(2i(\partial_3 a_3)\tilde{\varphi},n_{_{\mu,\varphi}}\big)+\big(2ia_3\tilde{\varphi},\partial_3n_{_{\mu,\varphi}}\big)
\\
&-\big(2i\chi_0(\partial_0D_0{\varphi}),n_{\mu,\varphi}\big)-\big(2i\chi_0D_0\varphi,\partial_0n_{\mu,\varphi}\big)
\\
&+\big(2i\chi_3\partial_0D_3{\varphi},n_{\mu,\varphi}\big)+\big(2i\chi_3D_3\varphi,\partial_0n_{\mu,\varphi}\big)
\end{split}
\end{equation}
and therefore for any $t,t^{\prime}$, we have
\begin{equation}
\label{eqb13.1.2}
\begin{split}
c_{_{\mu}}(t+t^{\prime},z) &=\frac{1}{2}\Big[c_{\mu}(t^{\prime},z-t)+c_{\mu}(t^{\prime},z+t)\Big]+\frac{1}{2}\big[\int_{z-t}^{z+t}\big(\partial_0c_{\mu}(t^{\prime},s)ds\big)\big]\\
&+\frac{1}{2}\int_{0}^{t}\int_{z-(t-\tau)}^{z+t-\tau}h(t^{\prime}+\tau,s)ds d\tau\\&
+\frac{1}{2}\int_{0}^t \Big[h_3(t^{\prime}+s,z+t-s)-h_3(t^{\prime}+s, z-(t-s))\Big]ds
\\&
-\frac{1}{2}\int_{z-t}^{z+t}h_0(t^{\prime},s)ds+\frac{1}{2}\int_{0}^t h_0(t^{\prime}+s,z-t+s) ds +\frac{1}{2}\int_{0}^th_0(t^{\prime}+s,z+t-s)ds
\end{split}
\end{equation}
\end{proposition}
\begin{proof}
We Take the inner product of the equations \eqref{eq7.1} with the zero modes $n_{\mu}$'s:
\begin{equation}
\label{eqb6}
\big(\psi_{tt}-\psi_{zz},n_{\mu}\big)+(L\psi,n_{\mu})=-(Hu,n_{\mu})-(N,n_{\mu})-(E,n_{\mu})
\end{equation}
where 
\begin{equation}
\label{eqb6.1001}
Hu=\begin{pmatrix}
Ru
\\
0
\end{pmatrix}+
\begin{pmatrix}
R_0u
\\
Su
\end{pmatrix}
\end{equation}
. We have:
\begin{equation}
\label{eqb6.1}
(L\psi,n_{\mu})=0
\end{equation} 
Let
\[
c_{\mu}=(\psi,n_{\mu})
\]
By \eqref{eqb6} and \eqref{eqb6.1}, we have:
\begin{equation}
\label{eq6.2}
\begin{split}
(c_{\mu})_{tt}-(c_{\mu})_{zz}&=2(\psi_t, (n_{\mu})_t)-2(\psi_z, (n_{\mu})_z)-(Hu,n_{\mu})-(N,n_{\mu})-(E,n_{\mu})\\
&+(\psi,(n_{\mu})_{tt}-(n_{\mu})_{zz})
\\
&=2\partial_t(\psi,(n_{\mu})_t)-2\partial_z (\psi,(n_{\mu})_{z})-(Hu,n_{\mu})
\\&-(N,n_{\mu})-(E,n_{\mu})-(\psi,(n_{\mu})_{tt})+(\psi,(n_{\mu})_{zz})
\end{split}
\end{equation}
Now, we want to write the term $(Hu,n_{\mu})$ in another form. 
\begin{equation}
\label{eq6.21}
\begin{split}
(Hu,n_{\mu})&=(Ru,n_{\mu,\varphi})+\big((R_0u,Su),n_{\mu}\big)\\
&=\big(-2ia_0\partial_t \tilde{\varphi}+2ia_3\partial_z \tilde{\varphi}-2i\tilde{a}_0D_0\varphi+2i\tilde{a}_3D_3\varphi,n_{\mu,\varphi}\big)
\\&+\big((R_0u,Su),n_{\mu}\big)
\end{split}
\end{equation}
Now, by equation \eqref{eq6.21}, we have:
\begin{equation}
\label{eqb8.1}
\begin{split}
(Hu,n_{\mu})&=\big(-2ia_0\partial_t \tilde{\varphi}+2ia_3\partial_z \tilde{\varphi}-2i\tilde{a}_0D_0\varphi+2i\tilde{a}_3D_3\varphi,n_{\mu,\varphi}\big)
\\
&+(R_0 u,n_{\mu,\varphi})+\sum_{j=1}^2(S_ju, n_{\mu,a_j})
\end{split}
\end{equation}
Therefore, according to \eqref{eq5.5}, we have:
\begin{equation}
\label{eqb9}
\begin{split}
(Hu,n_{\mu})&=-\partial_0\big(2ia_0\tilde{\varphi},n_{_{\mu,\varphi}})
+\big(2i(\partial_0 a_0)\tilde{\varphi},n_{_{\mu,\varphi}}\big)
+\big(2ia_0\tilde{\varphi},\partial_0 n_{_{\mu,\varphi}}\big)
\\&+\partial_3\big(2ia_3\tilde{\varphi},n_{_{\mu,\varphi}})-\big(2i(\partial_3 a_3)\tilde{\varphi},n_{_{\mu,\varphi}}\big)-\big(2ia_3\tilde{\varphi},\partial_3n_{_{\mu,\varphi}}\big)\\&
-\big(2if_0D_0\varphi-2if_3D_3\varphi,n_{_{\mu,\varphi}}\big)\\
&-\partial_{0}\big(2i\chi_0D_{0}\varphi,n_{\mu,\varphi}\big)+\big(2i\chi_0(\partial_0D_0{\varphi}),n_{\mu,\varphi}\big)+\big(2i\chi_0D_0\varphi,\partial_0n_{\mu,\varphi}\big)\\
&+\partial_{0}\big(2i\chi_3D_{3}\varphi,n_{\mu,\varphi}\big)-\big(2i\chi_3\partial_0D_3{\varphi},n_{\mu,\varphi}\big)-\big(2i\chi_3D_3\varphi,\partial_0n_{\mu,\varphi}\big)\\
&+\big(R_0u,n_{_{\mu,\varphi}}\big)+\sum_{j=1}^2(S_ju, n_{\mu,a_j})
\end{split}
\end{equation}
Combining equations \eqref{eq6.2} and \eqref{eqb9}, we have:
\begin{equation}
\label{eqb10}
\partial_{tt}c_{\mu}-\partial_{zz}c_{\mu}=\partial_0h_{0}+\partial_{3}h_3+h
\end{equation}
To solve this equation, we use the explicit formulas about the linear wave equations. The crucial point is that the term $\partial_0h_0$ in \eqref{eqb10} can be handled differently and the time derivative can be dropped in the solution, as in the following lemma. 
\begin{lemma}
\label{ldal}
The solution to the equation 
\begin{equation}
\label{ldal.eq1}
\partial_t^2f-\partial_z^2f=\partial_t w
\end{equation}
with the initial data $f(t,0)=\partial_t f(t,0)=0$
can be written as
\begin{equation}
\label{ldal.eq1.1}
f(t,z)=-\frac{1}{2}\int_{z-t}^{z+t}w(s,0)ds+\int_{0}^t w(s,z-t+s) ds +\int_{0}^tw(s,z+t-s)ds
\end{equation}
\end{lemma}
\begin{proof}
We have:
\begin{equation}
\label{ldal.p.eq1}
(\partial_t-\partial_z)(\partial_t+\partial_z)f=\frac{1}{2}(\partial_t-\partial_z)w+\frac{1}{2}(\partial_t+\partial_z)w
\end{equation}
Therefore, by linearity of the equations, the solution to the equations can be written as $f_1+f_2$ where $f_1$ solves
\begin{equation}
\label{ldal.p.eq1.1}
\begin{split}
&(\partial_t+\partial_z)f_1=\frac{1}{2}w
\\
&f_1(0,z)=\frac{1}{2}\int_{0}^zw(0,s)ds
\end{split}
\end{equation}
and 
\begin{equation}
\label{ldal.p.eq1.11}
\begin{split}
&(\partial_t-\partial_z)f_2=\frac{1}{2}w
\\
&f_2(0,z)=-\frac{1}{2}\int_{0}^z w(0,s)ds
\end{split}
\end{equation}
Therefore, we have:
\begin{equation}
\label{ldal.p.eq2}
f(t,z)=-\frac{1}{2}\int_{z-t}^{z+t}w(0,s)ds+\frac{1}{2}\int_{0}^t w(s,z-t+s) ds +\frac{1}{2}\int_{0}^tw(s,z+t-s)ds
\end{equation}
\end{proof}
Using lemma \ref{ldal} and the d'Alembert's formula, we have:
\begin{equation}
\label{eqb13.1}
\begin{split}
c_{_{\mu}}(t+t^{\prime},z)
&= 
\frac{1}{2}\Big[c_{\mu}(t^{\prime},z-t)+c_{\mu}(t^{\prime},z+t)\big]+\frac{1}{2}\big[\int_{z-t}^{z+t}\big(\partial_0c_{\mu}(t^{\prime},s)ds\big)\big]
\\
&+\frac{1}{2}\int_{0}^{t}\int_{z-(t-\tau)}^{z+t-\tau}(\partial_3h_3+h)(t^{\prime}+\tau,s)ds d\tau\\&
-\frac{1}{2}\int_{z-t}^{z+t}h_0(t^{\prime},s)ds+\int_{0}^t h_0(t^{\prime}+s,z-t+s) ds +\int_{0}^th_0(t^{\prime}+s,z+t-s)ds
\\
&
=\frac{1}{2}\Big[c_{\mu}(t^{\prime},z-t)+c_{\mu}(t^{\prime},z+t)\Big]+\frac{1}{2}\big[\int_{z-t}^{z+t}\big(\partial_0c_{\mu}(t^{\prime},s)ds\big)\big]
\\
&+\frac{1}{2}\int_{0}^{t}\int_{z-(t-\tau)}^{z+t-\tau}h(t^{\prime}+\tau,s)ds d\tau\\&
+\frac{1}{2}\int_{0}^t \Big[h_3(t^{\prime}+s,z+t-s)-h_3(t^{\prime}+s, z-(t-s))\Big]ds
\\&
-\frac{1}{2}\int_{z-t}^{z+t}h_0(t^{\prime},s)ds+\frac{1}{2}\int_{0}^t h_0(t^{\prime}+s,z-t+s) ds +\frac{1}{2}\int_{0}^th_0(t^{\prime}+s,z+t-s)ds
\end{split}
\end{equation}
\end{proof}
\item 
\begin{proposition}
\label{eq.prop2}
For any $t,t^{\prime}$, we have:
\begin{equation}
\label{eqb20.new}
\begin{split}
||c_{\mu}(t+t^{\prime},.)||_{H^3}\le & \big[||c_{\mu}(t^{\prime},.)||_{H^3}+C(t+1) ||\partial_0 c_{\mu}(t^{\prime})||_{H^2}\big]
\\
+&C\epsilon(t+1)\Big(||\psi(t^{\prime},.)||_{H^2}+||\partial_0 \tilde{a}_0(t^{\prime},.)||_{L^2}+||\partial_0\tilde{a}_3(t^{\prime},.)||_{L^2}\Big)
\\
+&Ct\epsilon\big{[} \sup_{(t^{\prime},t+t^{\prime})}||\psi(\tau,.)||_{H^2}+\sup_{(t^{\prime},t+t^{\prime})}||\partial_0 \tilde{a}_0(\tau,.)||_{H^1}+\sup_{(t^{\prime},t+t^{\prime})}||\partial_0 \tilde{a}_3(\tau,.)||_{H^1}\big{)}\big{]}\\
+&Ct(t+1)\epsilon^2\big{[} \sup_{(t^{\prime},t+t^{\prime})}||\psi(\tau,.)||_{H^2}+\sum_{j\in\{0,3\}}\sup_{(t^{\prime},t+t^{\prime})}||\partial_t\tilde{a}_j||_{H^1}]
\\
+&Ct(t+1)\big{[}\sum_{j\in\{0,3\}}\sup_{(t^{\prime},t+t^{\prime})} ||N_j(\tau,.)||_{H^2}+\sum_{j\in\{0,3\}}\sup_{(t^{\prime},t+t^{\prime})}||E_j(\tau,.)||_{H^2}\big{]}
\end{split}
\end{equation}
and
\begin{equation}
\label{eq.prop3.eqb24}
\begin{split}
||\partial_0 c_{\mu}(t+t^{\prime},.)||_{H^2}
&\le || c_{\mu}(t^{\prime},.)||_{H^2}+||\partial_0 c_{\mu}(t^{\prime},.)||_{H^2}\\
&+ C\epsilon(t+1)\Big[\sup_{(t^{\prime},t+t^{\prime})}||\psi(\tau,.)||_{H^2}+\sup_{(t^{\prime},t+t^{\prime})}||\partial_0 \tilde{a}_0(\tau,.)||_{H^1}+\sup_{(t^{\prime},t+t^{\prime})}||\partial_0 \tilde{a}_3(\tau,.)||_{H^1}\Big]\\
&+C\epsilon (t+1)\Big[\sup_{(t^{\prime},t+t^{\prime})}||N(\tau,.)||_{H^3}+ \sup_{(t^{\prime},t+t^{\prime})}||E(\tau,.)||_{H^3}\Big)\Big]
\end{split}
\end{equation}
\end{proposition}
\begin{proof}
We use proposition \ref{int.prop1} and therefore, we estimate the terms on the right hand side of equation \eqref{eqb13.1.2}. There holds:
\begin{equation}
\label{eqb13.1001}
||c_{\mu}(t^{\prime},z-t)+c_{\mu}(t^{\prime},z+t)||_{H^3}\le 2||c_{\mu}(t^{\prime},.)||_{H^3}
\end{equation}
To estimate 
\[
\int_{z-t}^{z+t} (\partial_0c_{\mu})(t^{\prime},s)ds
\]
we use the following lemma
\begin{lemma}
\label{les1}
Suppose that
\[
w(z)=\int_{z-t}^{z+t}f(s)ds
\]
\begin{equation}
\label{les1.eq1}
||w||_{H^3}\le C\big(t||f||_{L^2}+||f||_{H^2}\big)
\end{equation}
\end{lemma}
\begin{proof}
We have:
\begin{equation}
\label{les.eq2}
||w||_{L^2}\le 2t ||f||_{L^2}
\end{equation}
Also,
\begin{equation}
\label{les.eq3}
w_z(z)=f(z-t)-f(z+t)
\end{equation}
Therefore,
\begin{equation}
\label{les.eq4}
||w_z||_{H^2}\le C||f||_{H^2}
\end{equation}
Equations \eqref{les.eq2} and \eqref{les.eq4} finish the proof.
\end{proof}
Therefore, by lemma \ref{les1}, we have:
\begin{equation}
\label{eqb13.89}
||\int_{z-t}^{z+t}\big(\partial_0c_{\mu}(t^{\prime},s)ds\big)||_{H^3}\le C\big{(}t||\partial_0 c_{\mu}(t^{\prime},.)||_{L^2}+||\partial_0 c_{\mu}(t^{\prime},.)||_{H^2}\big{)}
\end{equation}
Similarly, we have:
\begin{equation}
\label{eqb13.891}
||\int_{z-t}^{z+t}h_0(t^{\prime},s)ds||_{H^3}\le C\big{(}t||\partial_0 c_{\mu}(t^{\prime},.)||_{L^2}+||\partial_0 c_{\mu}(t^{\prime},.)||_{H^2}\big{)}
\end{equation}
Now, we want to estimate
\begin{equation}
\label{eqb13.892}
\Big{|}\Big{|}\int_{0}^{t}\int_{z-(t-\tau)}^{z+t-\tau}h(t^{\prime}+\tau,s)ds \Big{|}\Big{|}_{H^3}
\end{equation}
We have:
\begin{claim}
\label{c1.es1}
\begin{equation}
\label{eqb13.893}
\Big{|}\Big{|}\int_{0}^{t}\int_{z-(t-\tau)}^{z+t-\tau}h(t^{\prime}+\tau,s)ds \Big{|}\Big{|}_{H^3}\le Ct\sup_{(t^{\prime},t^{\prime}+t)}\Big(t||h(t^{\prime}+\tau,.)||_{L^2}+||h(t^{\prime}+\tau,.)||_{H^2}\Big)
\end{equation}
\end{claim}
Now, we want to estimate the above quantities. 
\begin{proof}
\begin{equation}
\label{eqb13.894}
\begin{split}
\Big{|}\Big{|}\int_{0}^{t}\int_{z-(t-\tau)}^{z+t-\tau}h(t^{\prime}+\tau,s)ds \Big{|}\Big{|}_{H^3}&
\le t\sup_{\tau}\Big{|}\Big{|}\int_{z-(t-\tau)}^{z+t-\tau}h(t^{\prime}+\tau,s)ds\Big{|}\Big{|}_{H^3}
\end{split}
\end{equation}
By using lemma \ref{les1}, we have:
\begin{equation}
\label{eqb13.895}
\begin{split}
\Big{|}\Big{|}\int_{z-(t-\tau)}^{z+(t-\tau)}h(t^{\prime}+\tau,s)ds\Big{|}\Big{|}_{H^3}\le 
C\Big(t||h(t^{\prime}+\tau,.)||_{L^2}+||h(t^{\prime}+\tau,.)||_{H^2}\Big)
\end{split}
\end{equation}
Equations \eqref{eqb13.894} and \eqref{eqb13.895} imply \eqref{eqb13.893}.
\end{proof}
By differentiating with respect to $z$, we have:
\begin{equation}
\label{eqb13.896}
\Big{|}\Big{|}\frac{1}{2}\int_{0}^t \big[h_3(t^{\prime}+s,z+t-s)-h_3(t^{\prime}+s, z-(t-s))\big]ds\Big{|}\Big{|}_{H^3}\le Ct\sup_{\tau\in (t^{\prime},t^{\prime}+t)}||h_3(\tau,.)||_{H^3}
\end{equation}
Similarly, we have:
\begin{equation}
\label{eqb13.897}
\Big{|}\Big{|}\int_{0}^t h_0(t^{\prime}+s,z-t+s) ds +\int_{0}^th_0(t^{\prime}+s,z+t-s)ds\Big{|}\Big{|}_{H^3}\le Ct\sup_{\tau\in (t^{\prime},t^{\prime}+t)}||h_0(\tau,.)||_{H^3}
\end{equation}
By using \eqref{eqb13.1} and combining estimates \eqref{eqb13.1001}, \eqref{eqb13.89}, \eqref{eqb13.891},\eqref{eqb13.893}, \eqref{eqb13.896} and \eqref{eqb13.897}, we have:
\begin{equation}
\label{eqb14}
\begin{split}
||c_{\mu}(t+t^{\prime},.)||_{H^3}\le &||c_{\mu}(t^{\prime},.)||_{_{H^3}}+C\big{(}t||\partial_0 c_{\mu}(t^{\prime},.)||_{L^2}+||\partial_0 c_{\mu}(t^{\prime},.)||_{H^2}\big{)}
\\
+&C\big{(}t||h_0(t^{\prime},.)||_{L^2}+||h_0(t^{\prime},.)||_{H^2}\big{)}
\\+&
C\big{[}t\sup_{(t^{\prime},t+t^{\prime})}||h_3(\tau,.)||_{H^3}+t\sup_{(t^{\prime},t+t^{\prime})}||h_0(\tau,.)||_{H^3}\big{]}
\\
+&Ct\sup_{(t^{\prime},t^{\prime}+t)}\Big(t||h(\tau,.)||_{L^2}+||h(\tau,.)||_{H^2}\Big)
\end{split}
\end{equation}
for some $C>0$. But, by \eqref{eqb11}, we have: 
\begin{equation}
\label{eqb15}
||h_0(\tau,.)||_{H^3}\le C(||a_0\tilde{\varphi}(\tau,.)||_{H^3}+||\chi_0D_0\varphi(\tau,.)||_{H^3}+||\chi_3D_3\varphi(\tau,.)||_{H^3}+\epsilon ||\psi(\tau,.)||_{H^3})
\end{equation}
According to equations \eqref{eq5.6} and lemma \ref{3d.l1} and remark \ref{reg.3d} in the appendix, we have:
\begin{equation}
\label{eqb16}
||\chi_j(\tau,.)||_{H^3} \le C ||\partial_t \tilde{a}_j(\tau,.)||_{H^1}
\end{equation}
for some $C>0$ and any $\tau$ and $j=0,3$. According to the estimates that we have for $a_0$ and $D_0\varphi$ and $D_3\varphi$, by \eqref{eqb15} and \eqref{eqb16}, we have:
\begin{equation}
\label{eqb17}
||h_0(\tau,.)||_{H^3}\le C\epsilon (||\psi(\tau,.)||_{H^3}+||\partial_0 \tilde{a}_0(\tau,.)||_{H^1}+||\partial_0 \tilde{a}_3(\tau,.)||_{H^1})
\end{equation}
Similarly, by \eqref{eqb12}, we have:
\begin{equation}
\label{eqb18}
||h_3(\tau,.)||_{H^3}\le C\epsilon ||\psi(\tau,.)||_{H^3}
\end{equation}
To estimate $||h(\tau,.)||_{H^2}$ defined in \eqref{eqb13}, we first estimate $||f_j(\tau,.)||_{H^2}$. By equation \eqref{eq5.601} and \eqref{eqb16}, we have:
\begin{equation}
\label{eqb18.1}
||f_j(\tau,.)||_{H^2} \le C\big{[}\epsilon ||\psi(\tau,.)||_{L^2}+\epsilon ||\partial_t \tilde{a}_j||_{H^1}+||N_j(\tau,.)||_{L^2}+||E_j(\tau,.)||_{L^2}\big{]}
\end{equation}
Also, by equation \eqref{new.2.3.neq1}, we have:
\begin{equation}
\label{eqb20}
||\big{(}R_0 u(\tau,.),n_{\mu,\varphi}(\tau,.)\big{)}||_{H^2} \le C\epsilon^2 ||\psi(\tau,.)||_{H^2}
\end{equation}
and by equation \eqref{new.eq7.5},
\begin{equation}
\label{eqb21}
||(S_j u(\tau,.),n_{\mu,\varphi}(\tau,.)\big{)}||_{H^2} \le C\epsilon^2 ||\psi(\tau,.)||_{H^2}
\end{equation}
Therefore, by equation \eqref{eqb13}, we have:
\begin{equation}
\label{eqb19}
||h(\tau,.)||_{H^2}\le C\Big{[}\epsilon^2 ||\psi(\tau,.)||_{H^2}+\sum_{j\in\{0,3\}}\big{[}\epsilon^2||\partial_t\tilde{a}_j||_{H^1}+||N_j(\tau,.)||_{H^2}+||E_j(\tau,.)||_{H^2}\big{]}\Big{]}
\end{equation}
Therefore, by equations \eqref{eqb14}, \eqref{eqb17}, \eqref{eqb18} and \eqref{eqb19}, we have:
\begin{equation}
\label{eqb20}
\begin{split}
||c_{\mu}(t+t^{\prime},.)||_{H^3}\le & \big[||c_{\mu}(t^{\prime},.)||_{H^3}+C(t+1) ||\partial_0 c_{\mu}(t^{\prime})||_{H^2}\big]
\\
+&C\epsilon(t+1)\Big(||\psi(t^{\prime},.)||_{H^2}+||\partial_0 \tilde{a}_0(t^{\prime},.)||_{L^2}+||\partial_0\tilde{a}_3(t^{\prime},.)||_{L^2}\Big)
\\
+&Ct\epsilon\big{[} \sup_{(t^{\prime},t+t^{\prime})}||\psi(\tau,.)||_{H^2}+\sup_{(t^{\prime},t+t^{\prime})}||\partial_0 \tilde{a}_0(\tau,.)||_{H^1}+\sup_{(t^{\prime},t+t^{\prime})}||\partial_0 \tilde{a}_3(\tau,.)||_{H^1}\big{)}\big{]}\\
+&Ct(t+1)\epsilon^2\big{[} \sup_{(t^{\prime},t+t^{\prime})}||\psi(\tau,.)||_{H^2}+\sum_{j\in\{0,3\}}\sup_{(t^{\prime},t+t^{\prime})}||\partial_t\tilde{a}_j||_{H^1}]
\\
+&Ct(t+1)\big{[}\sum_{j\in\{0,3\}}\sup_{(t^{\prime},t+t^{\prime})} ||N_j(\tau,.)||_{H^2}+\sum_{j\in\{0,3\}}\sup_{(t^{\prime},t+t^{\prime})}||E_j(\tau,.)||_{H^2}\big{]}
\end{split}
\end{equation}
Now, we want to control $||\partial_0 c_{\mu}||_{H^2}$.
\begin{equation}
\label{eqb23}
\begin{split}
\partial_0 c_{_{\mu}}(t+t^{\prime},z)&=\frac{1}{2}\partial_t\Big[c_{\mu}(t^{\prime},z-t)+c_{\mu}(t^{\prime},z+t)\big]+\frac{1}{2}\partial_t\big[\int_{z-t}^{z+t}\big(\partial_0c_{\mu}(t^{\prime},s)ds\big)\big]
\\
&+\frac{1}{2}\partial_t\Big[\int_{0}^{t}\big(h_3(t^{\prime}+\tau,z+t-\tau)-h_3(t^{\prime}+\tau,z-(t-\tau))\big)d\tau\Big]
\\
&+\frac{1}{2}\partial_t\Big [\int_{0}^{t}\int_{z-(t-\tau)}^{z+t-\tau}h(t^{\prime}+\tau,s)ds d\tau\Big]\\&
-\frac{1}{2}\frac{\partial}{\partial t}\int_{z-t}^{z+t}h_0(t^{\prime},s)ds+\frac{1}{2}\frac{\partial}{\partial t}\int_{0}^t h_0(t^{\prime}+s,z-t+s) ds\\
&+\frac{1}{2}\frac{\partial}{\partial t}\int_{0}^th_0(t^{\prime}+s,z+t-s)ds
\\
&=-\frac{1}{2}(\partial_z c_{\mu})(t^{\prime},z-t)+\frac{1}{2}(\partial_zc_{\mu})(t^{\prime},z+t)+\frac{1}{2}(\partial_0 c_{\mu})(t^{\prime},z+t)+\frac{1}{2}(\partial_0 c_{\mu})(t^{\prime},z-t)\\
&+\frac{1}{2}\Big[\int_{0}^{t}\big((\partial_3 h_3)(t^{\prime}+\tau,z+t-\tau)-(\partial_3 h_3)(t^{\prime}+\tau,z-(t-\tau)\big)d\tau\Big]
\\
&+\frac{1}{2}\int_{0}^t\Big [\big(h(t^{\prime}+\tau,z+t-\tau)+h(t^{\prime}+\tau,z-(t-\tau)\big)d\tau \Big]
\\
&
-\frac{1}{2}\big(h_0(t^{\prime},z+t)+ h_0(t^{\prime},z-t)\big)
+h_0(t^{\prime}+t,z)
\\
&-\frac{1}{2}\int_{0}^t(\partial_3h_0)(t^{\prime}+s,z-t+s)ds+\frac{1}{2}\int_{0}^t(\partial_3h_0)(t^{\prime}+s,z+t-s)ds
\end{split}
\end{equation}
Therefore, by \eqref{eqb17}, \eqref{eqb18} and \eqref{eqb19}, we have:
\begin{equation}
\label{eqb24}
\begin{split}
||\partial_0 c_{\mu}(t+t^{\prime},.)||_{H^2}&\le ||c_{\mu}(t^{\prime},.)||_{H^3}+||\partial_0 c_{\mu}(t^{\prime},.)||_{H^2}+t\Big[\sup_{(t^{\prime},t+t^{\prime})}||h_3(\tau,.)||_{H^3}+\sup_{(t^{\prime},t+t^{\prime})}\big||h(\tau,.)||_{H^2}\Big]
\\
&+C(t+1)\sup_{(t^{\prime},t+t^{\prime})}||h_0(\tau,.)||_{H^3}
\\
&\le || c_{\mu}(t^{\prime},.)||_{H^3}+||\partial_0 c_{\mu}(t^{\prime},.)||_{H^2}\\
&+ C\epsilon(t+1)\Bigg[\sup_{(t^{\prime},t+t^{\prime})}||\psi(\tau,.)||_{H^3}+\sup_{(t^{\prime},t+t^{\prime})}||\partial_0 \tilde{a}_0(\tau,.)||_{H^1}+\sup_{(t^{\prime},t+t^{\prime})}||\partial_0 \tilde{a}_3(\tau,.)||_{H^1}\Bigg]\\
&+C\epsilon (t+1)\Bigg[||N(\tau,.)||_{H^3}+ ||E(\tau,.)||_{H^3}\Big)\Bigg]
\end{split}
\end{equation}
\end{proof}
\end{itemize}
\subsection{Returning to estimates for the perturbation}
\label{sub.so1}
To move from the estimates that we have for the quantities $Q_1, Q_2, \{c_{\mu}\}_{\mu}$ to the estimates that we want to have for $u$, we use propositions \ref{m1l1} and \ref{b1.l91}.
\begin{proposition}
\label{m1l1}
There exists $C>0$ such that for any $t$,
\begin{equation}
\label{eq117}
\sum_{_{\mu}}||c_{\mu}(.,t)||_{_{H^3}}^2+Q_1(t)+Q_2(t)\ge C ||u(.,t)||_{_{H^3}}^2
\end{equation}
\end{proposition}
\begin{proposition}
\label{b1.l91}
There exists $C>0$ such that for any $t$,
\begin{equation}
\label{b1.l92}
||\partial_tu(.,t)||_{H^2}^2\le C\Big(||\partial_t c_{\mu}(.,t)||_{H^2}^2+||c_{\mu}(.,t)||_{H^2}^2+Q_1(t)+Q_2(t)\Big)
\end{equation}
\end{proposition}
The main ingredient in the proof of the above estimates is the coercivity result \ref{eq35.1} which implies that
\begin{proposition}
\label{corepr}
Suppose that $\psi\in H^1(\mathbb{R}^2)$ and let $p=(\phi,\alpha)\in M_N$ is an $N$-vortex configuration. Let
\begin{equation}
\label{corepr.eq1}
c_{\mu}=\langle\psi,n_{\mu}\rangle_{L^2}
\end{equation}
where $\{n_{\mu}\}_{\mu}$ is the introduced orthonormal basis for $T_p M_N$. Then,
\begin{equation}
\label{corepr.eq2}
\int_{\mathbb{R}^2}(L\psi,\psi)+\sum_{\mu}|c_{\mu}|^2\ge C||\psi||_{H^1}^2
\end{equation}
\end{proposition}
We will also use the following estimate:
\begin{proposition}
\label{l2.new}
Suppose that $f\in H^1(\mathbb{R}^2)$ and $p=(\phi,\alpha)\in K\subset M_N$ and $K\subset M_N$ is compact.  Let
\[
A(f)=-\Delta f+|\phi|^2f
\]
There exist a constant $C=C(K)$ such that
\[
Q(f)=\int_{\mathbb{R}^3}\big(Af,f\big)\ge C ||f||_{_{H^1}}^2
\]
\end{proposition}
The proof of this estimate can be found in the proof of lemma ??? in the appendix.
\subsubsection{Proof of proposition \ref{m1l1}}
%\begin{proposition}
%\label{rep.l1}
%Suppose that $w\in H^1(\mathbb{R}^3)$ and let $\gamma:[-\infty,\infty]\to M_N$ is continuous. Let
%\begin{equation}
%\label{rep.l1.0}
%Mw=\begin{pmatrix}
%&-\partial_z^2 w+L[(\phi,\alpha)(\gamma(z))]w
%\\
%&-\Delta_y^2 w+|\phi|^2w
%\end{pmatrix}
%\end{equation}
%\end{proposition}
In the proof, we consider the time $t$ fixed. Then, let
\[
\gamma(z)=q(t,z)
\]
We use the coordinate system $(y,z)=\big((x_1,x_2),z\big)$ for the subspace $\{t\}\times \mathbb{R}^3$. For any
\[
w\in H^1\big(\mathbb{R}^3,(\mathbb{C}\oplus \mathbb{R}^2)\oplus\mathbb{R}\oplus \mathbb{R}\big)
\] with
\[
w=(\psi,w_0,w_3)
\]
Let
\begin{equation}
\label{l4.eq1}
Kw=-\partial_z^2w+\begin{pmatrix}
L\psi
\\
\\
(-\Delta_y+|\phi|^2)w_0
\\
\\
(-\Delta_y+|\phi|^2)w_3
\end{pmatrix}
\end{equation}
where $L$ and $\phi$ correspond to the base point vortex configuration in $\gamma(z)$, for any $z\in \mathbb{R}$.
\\
Let
\begin{equation}
\label{l4.eq1.1}
c_{\mu}(z)=\langle \psi(.,z), n_{\mu}(.,z)\rangle_{L^2}
\end{equation}
and
\begin{equation}
\label{l4.eq2}
E_1(w)=\int_{\mathbb{R}^3}(Kw,w)+\sum_{\mu}|c_{\mu}|_{L^2}^2
\end{equation}
\begin{proposition}
\label{rep.l1.0.1}
There exists a constant $C>0$ such that for any $w\in H^1(\mathbb{R}^3)$, we have
\begin{equation}
\label{rep.l1.eq1}
E_1(w)\ge C||w||_{H^1}^2
\end{equation}
\end{proposition}
\begin{proof}
Note that by proposition \ref{corepr}, for any $z$, we have:
\begin{equation}
\label{rep.l1.eq1.01}
\int_{\mathbb{R}^2}(L\psi,\psi)(y,z)dy+\sum_{\mu}|c_{\mu}|^2(z) \ge C||\psi(.,z)||_{H^1}^2
\end{equation}
By integrating this over $z$, we deduce that
\begin{equation}
\label{rep.l1.eq1.02}
\begin{split}
E_1(w)&\ge ||\partial_z \psi||_{L^2}^2+\int_{\mathbb{R}}\int_{\mathbb{R}^2}(L\psi,\psi)(y,z) dzdy+\sum_{\mu}\int_{\mathbb{R}}|c_{\mu}|^2dz
\\
&\ge C\Big(||\partial_z \psi||_{L^2}^2+\int_{\mathbb{R}}||\psi(.,z)||_{H^1}^2dz\Big)
\\
&\ge C||\psi||_{H^1}^2
\end{split}
\end{equation}
Similarly, by proposition \ref{l2.new}, we have:
\begin{equation}
\label{rep.l1.eq1.03}
\begin{split}
E_1(w)&\ge  C\Big[\sum_{j\in \{0,3\}}||\partial_z w_j||_{L^2}^2+\int_{\mathbb{R}}\int_{\mathbb{R}^2}\big((-\Delta_y w_j+|\phi|^2w_j)w_j\big)dydz\Big]
\\
&\ge C\sum_{j\in \{0,3\}}\Big[||\partial_z w_j||_{L^2}^2+\int_{\mathbb{R}}\int_{\mathbb{R}^2}(|\nabla_y w_j |^2+|\phi|^2w_j^2)dy dz\Big]
\\
&\ge C\sum_{j\in \{0,3\}}\Big[||\partial_z w_j||_{L^2}^2+\int_{\mathbb{R}}||w_j(.,z)||_{_{H^1}}^2 dz\Big]
\\
&\ge C\sum_{j\in\{0,3\}}||w_j||_{_{H^1}}^2
\end{split}
\end{equation}
Inequalities \eqref{rep.l1.eq1.02} and \eqref{rep.l1.eq1.03} finish the proof of proposition \ref{rep.l1.0.1}.
\end{proof}
\begin{proposition}
\label{rep.l2}
There exists a constant $C>0$ such that for any $w\in H^3(\mathbb{R}^3)$, we have
\begin{equation}
\label{rep.l2.eq1}
\int_{\mathbb{R}^3}(K^2w,Kw)+\sum_{\mu}||c_{\mu}||_{H^2}^2+C\epsilon ||w||_{H^1}^2\ge C||Kw||_{H^1}^2
\end{equation}
\end{proposition}
\begin{proof}
We apply proposition \ref{rep.l1.0.1} to $Kw$. Let
\begin{equation}
\label{rep.l1.eq1.9.1}
d_{\mu}(z)=\langle -\partial_z^2 \psi +L\psi,n_{\mu}(z)\rangle_{_{L^2}}
\end{equation}
Then by proposition \ref{rep.l1.0.1}, we obtain that
\begin{equation}
\label{rep.l1.eq1.9.2.new}
\int_{\mathbb{R}^3}(K^2w,Kw)+\sum_{\mu}||d_{\mu}||_{L^2}^2\ge C||Kw||_{H^1}^2
\end{equation}
But, note that 
\begin{equation}
\label{rep.l1.eq1.9.3}
\begin{split}
d_{\mu}(t,z)&=\langle -\partial_z^2w+Lw,n_{\mu} \rangle _{_{L^2}}
\\
&=-\langle \partial_z^2w,n_{\mu} \rangle _{_{L^2}}
\\
&=-\partial_z^2c_{\mu}+2\langle w_{z},(n_{\mu})_z\rangle_{_{L^2}}+\langle w,(n_{\mu})_{zz}\rangle_{_{L^2}}
\end{split}
\end{equation}
Therefore,
\begin{equation}
\label{rep.l1.eq1.9.4}
||d_{\mu}||_{L^2}^2\le ||c_{\mu}||_{H^2}^2+C\epsilon ||w||_{_{H^1}}^2
\end{equation}
Therefore, by \eqref{rep.l1.eq1.9.2.new}, we have:
\begin{equation}
\label{rep.l1.eq1.9.5}
\int_{\mathbb{R}^3}(K^2w,Kw)+\sum_{\mu}||c_{\mu}||_{H^2}^2+C\epsilon||w||_{H^1}^2\ge C||Kw||_{H^1}^2
\end{equation}
\end{proof}
\begin{proposition}
\label{rep.l1.eq2}
There exists a constant $C>0$ such that 
\begin{equation}
\label{rep.l1.eq3}
||Mu||_{H^1}^2+||u||_{H^1}^2 \ge C||u||_{H^3}^2
\end{equation}
\end{proposition}
\begin{proof}
It suffices to show that
\begin{equation}
\label{rep.l1.eq3.1}
||\Delta u||_{_{H^1}}^2\le C\Big(||Mu||_{H^1}^2+||u||_{H^1}^2\Big)
\end{equation}
We have:
\begin{equation}
\label{rep.l1.eq3.1.1}
||\Delta u||_{L^2}^2\le C\Big(||Mu||_{L^2}^2+||u||_{H^1}^2\Big)
\end{equation}
Therefore,
\begin{equation}
\label{rep.l1.eq3.2}
||u||_{H^2}^2\le C\Big(||Mu||_{L^2}^2+||u||_{H^1}^2\Big)
\end{equation}
By replacing $u$ with $\nabla u$ in \eqref{rep.l1.eq3.1.1}, we have:
\begin{equation}
\label{rep.l1.eq3.3}
\begin{split}
||\nabla \Delta u||_{L^2}^2&\le C\big(||M\nabla u||_{L^2}^2+||\nabla u||_{H^1}^2\big)
\\
&\le C\big(||\nabla Mu||_{L^2}^2+||u||_{H^2}^2\big)
\\
&\le C\big(||Mu||_{H^1}^2+||u||_{H^1}^2\big)
\end{split}
\end{equation}
where the last line follows by \eqref{rep.l1.eq3.2}. This implies \eqref{rep.l1.eq3}.
\end{proof}
According to propositions \ref{rep.l1.0.1} and \ref{rep.l2}, we have:
\begin{equation}
\label{coer.pr.eq1}
Q_1+Q_2+\sum_{\mu}||c_{\mu}||_{_{H^2}}^2+C\epsilon||u||_{H^1}^2\ge C\big[||u||_{_{H^1}}^2+||Mu||_{_{H^1}}^2\big]
\end{equation}
This and \eqref{rep.l1.eq3} imply \eqref{eq117}.
\subsubsection{Proof of proposition \ref{b1.l91}}
Using proposition \ref{m1l1}, we have:
\begin{equation}
\label{b1.192.eq1}
\begin{split}
Q_2(t)+Q_1(t)+\sum_{\mu}||\partial_tc_{\mu}||_{H^2}^2+\sum_{\mu}||c_{\mu}||_{H^3}^2&\ge C\Big[||u_t||_{L^2}^2+||(Mu)_t||_{L^2}^2+||u||_{H^3}^2\\
&+\sum_{\mu}||\partial_tc_{\mu}||_{H^2}^2+\sum_{\mu}||c_{\mu}||_{H^3}^2\Big]\\
&\ge C\Big[||u_t||_{L^2}^2+||Mu_t||_{L^2}^2+\sum_{\mu}||(\partial_t\psi,n_{\mu})_{_{L^2}}||_{_{H^2}}^2\Big]\\
&\ge C\Big[||Mu_t||_{_{L^2}}^2+\int_{\mathbb{R}^3}(Mu_t,u_t)\\
&+\sum_{\mu}||(\partial_t\psi,n_{\mu})_{_{L^2}}||_{_{H^2}}^2\Big]
\end{split}
\end{equation}
But by proposition \ref{rep.l1.0.1}, we have:
\begin{equation}
\label{b1.192.eq1.1}
\int_{\mathbb{R}^3}(Mu_t,u_t)+\sum_{\mu}||(\partial_t\psi,n_{\mu})_{_{L^2}}||_{_{H^2}}^2 \ge C||u_t||_{_{H^1}}^2
\end{equation}
According to \eqref{b1.192.eq1} and \eqref{b1.192.eq1.1}, we have:
\begin{equation}
\label{b1.192.eq1.2}
\begin{split}
Q_1(t)+Q_2(t)+\sum_{\mu}||\partial_tc_{\mu}||_{H^2}^2+\sum_{\mu}||c_{\mu}||_{H^3}^2& \ge C \big[||Mu_t||_{_{L^2}}^2+||u_t||_{_{H^1}}^2\big]\\
& \ge C||u_t||_{_{H^2}}^2
\end{split}
\end{equation}
\subsection{Bootstrap}
\label{sec.bo}
\begin{theorem}
\label{thm3bs}
Suppose that the number $n$ in \eqref{er.ans} satisfies $n\ge 6$. There exists $\epsilon_1>0$ such that if $\epsilon<\epsilon_1$ and $u$ solves the equation \eqref{eq7.1} on
\[
[0,T^{\prime})\times \mathbb{R}^3
\]
for some $T^{\prime}<\frac{1}{2\epsilon}$ and
\begin{equation}
\label{eq8bsprf}
||u(t,.)||_{_{H^3}}+||\partial_t u(t,.)||_{_{H^2}}\le K\le \epsilon^3
\end{equation}
and the initial conditions are as mentioned before,
\begin{equation}
\label{eq9bspf}
||u(t,.)||_{_{H^3}}+||\partial_t u(t)||_{_{H^2}}\le C\big(\epsilon t\big)^{\frac{1}{2}}K+\epsilon^4
\end{equation}
for some $C>0$ which depends only on the wave map and the number of iterations in the ansatz.
\end{theorem}
\begin{proof}
We have
\begin{equation}
\label{b1.p.eq1}
||N||_{_{H^2}}\le CK^2
\end{equation}
and 
\begin{equation}
\label{b1.p.eq2}
||E||_{_
{H^3}}\le C\epsilon ^6
\end{equation}
Now, we want to control $Q_1+Q_2$. According to \eqref{eq.prop1.eqb5}, \eqref{eq8bsprf}, \eqref{b1.p.eq1} and \eqref{b1.p.eq2}, we have:
\begin{equation}
\label{b1.p.eq3}
\begin{split}
Q_1(t)+Q_2(t)&\le Q_1(0)+Q_2(0)+ Ct\Big[\big(\epsilon K+K^2+\epsilon^6\big)K+\epsilon K^2\Big]
\\
&\le Ct\epsilon K+C\epsilon^{8}
\end{split}
\end{equation}
and by \eqref{eqb20.new}, we have:
\begin{equation}
\label{b1.p.eq4}
\begin{split}
||c_{\mu}(t)||_{_{H^3}}&\le C\epsilon (t+1)K+ Ct\epsilon K+Ct(t+1)\epsilon^2 K+Ct(t+1)(K^2+\epsilon^6)
\\
& \le Ct\epsilon K+C\epsilon^{4}
\end{split}
\end{equation}
and by \eqref{eq.prop3.eqb24}, we have:
\begin{equation}
\label{b1.p.eq5}
\begin{split}
||\partial_0 c_{\mu}||_{_{H^2}}&\le C\epsilon(t+1)K+C\epsilon(t+1)(K^2+\epsilon^n)\\
& \le C\epsilon t K+C\epsilon^{4}
\end{split}
\end{equation}
Therefore,
\begin{equation}
\label{b1.p.eq6}
\begin{split}
Q_1(t)+Q_2(t)+||c_{\mu}||_{H^3}^2+||\partial_t c_{\mu}||_{_{H^2}}^2&\le Ct\epsilon K^2+ C\epsilon^8
\end{split}
\end{equation}
According to \eqref{b1.p.eq6} and propositions \ref{m1l1}
and \ref{b1.l91}, the inequality \eqref{eq9bspf} holds.
\end{proof}
\subsection{Last Step: Moving forward in time}
To prove theorem \ref{thm2.main}, we do the iterations in the ansatz $3$ times so that by propositions \ref{2.3.p1}, we can ensure that the error $E$ of the approximate solution satisfies
\begin{equation}
\label{ls.eq1}
||E||_{_{H^4}}+||E_t||_{_{H^3}}\le C\epsilon^6
\end{equation}
and choose the initial data as in \eqref{sg.eq4.02} and \eqref{sg.eq4.1002}. We use the local existence statement, theorem \ref{thm2.loc}, with $b=1$ in \eqref{b.cs1} to find a solution $u$ defined over the interval $[0,T_1)$, for some $T_1$. Then, we use the bootstrap theorem \ref{thm3bs} to refine the estimates. Then, we will look at the time $t=T_1$ and if it satisfies condition  in the local existence result, we repeat the same process and then look at the time $t=2T_1$ and continue likewise. We claim that there exists a number $\kappa$ such that one can use the mentioned procedure and find solutions to \eqref{eq7.1} in a time interval of $[0,\frac{\kappa}{\epsilon})$ such that $u$ satisfies
\begin{equation}
\label{lsmf}
||u||_{_{H^3}}+||u_t||_{_{H^2}}\le \epsilon^3
\end{equation}
Suppose that \eqref{lsmf} holds up to the step $l$; interval $[0,lT_1)$, but not for step $(l+1)$, the interval $[lT_1,(l+1)T_1)$. Therefore by the bootstrap statement, theorem \ref{thm3bs}, we have:
\begin{equation}
\label{mft.eq3.1}
||u||_{_{H^3}}+||u_t||_{_{H^2}} \le C(\epsilon t)^{\frac{1}{2}}\epsilon^3+\epsilon^4
\end{equation}
for $0\le t\le lT$.
Therefore, by the local existence theorem, we have:
\begin{equation}
\label{mft.eq3.2}
||u||_{_{H^3}}+||u_t||_{_{H^2}} \le C \Big[C(\epsilon t)^{\frac{1}{2}}\epsilon^3+\epsilon^4\Big]+ \epsilon^5
\end{equation}
over the interval $[lT,(l+1)T)$. Since the condition \eqref{lsmf}
fails to be satisfied on this interval, then by \eqref{mft.eq3.2}
\begin{equation}
\label{mft.eq3.3}
 C(\epsilon (l+1)T)^{\frac{1}{2}} \ge \frac{1}{2}
\end{equation}
if $\epsilon$ is small enough, and therefore,
\begin{equation}
\label{mft.eq3.4}
l+1\ge \frac{1}{4 C^2 T^2 \epsilon}
\end{equation}
and this finishes the proof of the fact that the aforementioned process can be carried on over a time interval of time interval $[0,\frac{\kappa}{\epsilon}]$ with the desired estimate \eqref{lsmf}. Therefore, we have proved that
\begin{proposition}
\label{prop.fin.per}
Provided that the approximate solution $v=(\varphi,a)$ constructed in theorem \ref{2.3.p1} satisfies the error condition \eqref{er.ans} with $n=6$, then one can find $u=(\tilde{\varphi},\tilde{a})$ such that $(v+u)$ solves AHM on a time interval of the form $[0,\frac{\kappa}{\epsilon})$ and
\begin{equation}
\label{prop.fin.per.eq1}
||u(.,t)||_{_{H^3}}+||u_t(.,t)||_{_{H^2}}\le \epsilon ^3
\end{equation}
for every $t$
\end{proposition}

\section{Proof of the main theorem}
We use proposition \ref{2.3.p1} with $m=3$ iterations so that the error term $E$ of the approximate solution is of the form
\begin{equation}
\label{pmt.eq1}
E=(E_{\varphi},E_{0},E_1,E_2,E_3)=O(\epsilon^{8},\epsilon^{7},\epsilon^{8},\epsilon^8,\epsilon^{7})
\end{equation}
in a pointwise sense and since the error term is supported on an interval of length $\frac{C}{\epsilon}$, then we have the estimate
\begin{equation}
\label{pmt.eq2}
||E(.;t)||_{_{H^4}}+||E_t(.;t)||_{_{H^3}}\le C\epsilon^6
\end{equation}
for any $t$. Assume that $v=(\varphi,a)$ is the obtained approximate solution. Now, by proposition \ref{prop.fin.per}, we find $u$ with
\begin{equation}
\label{pmt.eq3}
||u||_{_{H^3}}+||u_t||_{_{H^2}}\le \epsilon^3
\end{equation}
on the interval $[0,\frac{\kappa}{\epsilon})$ such that $v+u$ solves the AHM equation. Property \ref{pmt.eq3} and \eqref{p1.eq0}, \eqref{prop.ans.eq1}, \eqref{prop.ans.eq2} in theorem \ref{pmt.eq1} imply that 
\begin{equation}
\label{pmt.eq4}
v(t,y,z)=\begin{pmatrix}
(\phi,\alpha)(y;q(\epsilon t, \epsilon z))
\\
0
\end{pmatrix}+
\begin{pmatrix}
\bar{\psi}
\\
\bar{a}_0,\bar{a}_3
\end{pmatrix}
\end{equation}
where
\begin{equation}
\label{pmt.eq5}
||\psi||_{_{H^3}}+||\psi_t||_{_{H^2}}\le C\epsilon^{\frac{3}{2}} 
\end{equation}
and 
\begin{equation}
\label{pmt.eq6}
\sum_{j=0,3}||\bar{a}_j||_{_{L^{\infty}}}\le C\epsilon
\end{equation}
and also
\begin{equation}
\label{pmt.eq7}
\sum_{j=0,3}||\nabla \bar{a}_j||_{_{H^2}}\le C \epsilon^{\frac{1}{2}}
\end{equation}
Properties \eqref{pmt.eq3}, \eqref{pmt.eq4}, \eqref{pmt.eq5}, \eqref{pmt.eq6} and \eqref{pmt.eq7} imply the desired estimates in theorem \ref{thm2.main} for a time interval of the form $[0,\frac{C}{\epsilon})$ where $C$ depends only in the wave map $q$.
\chapter{Appendix}
In this appendix, we are going to mention some analytic results and estimates which have been used in the chapters. 
\begin{lemma}
\label{al0}
Corresponding to the vortex centers $z_1,z_2,\cdots,z_N$ contained inside the ball $B(0,{R})\subset{\mathbb{R}}^2$, consider the function $\Theta:\mathbb{C}\to \mathbb{R}$ as
\begin{equation}
\label{al0.eq1}
\Theta(z)=2\sum_{i=1}^N \arg(z-z_{i})
\end{equation}
Suppose that $z=z^1+iz^2$ and 
\[
z_j=z_{j}^1+iz_{j}^2
\]
where $z_1,z_2,z_{j}^1,z_{j}^2\in \mathbb{R}$. Then, for every multi-index $r>0$, there exists $A>0$ such that
\[
|D^r\Theta(z)|\le A|z|^{-|r|+1}
\]
for every $z\in \mathbb{R}^2$ with $|z|>2R$, where $D^r$ denotes a combination of the differentials $\frac{\partial}{\partial z^i}$ and $\frac{\partial}{\partial z^{i}_j}$.
\end{lemma}

\begin{lemma}
\label{al1}
Consider a compact subset $K\subset M_1$. Let $p=(\phi,\alpha)\in K$ . Then for any $\zeta\in L^2(\mathbb{R}^2)$, there exists $u\in H^2(\mathbb{R}^2)$ such that:
\begin{equation}
    \label{eqp0}
    \Delta u-|\phi|^2u=\eta
\end{equation}
Furthermore, it satisfies

\begin{equation}
    \label{eqq0}
    ||u||_{H^2}\le C||\eta||_{L^2}
\end{equation}
for a constant $C=C(K)$. Moreover, if $\eta\in H^2(\mathbb{R}^2)$ and 
\begin{equation}
\label{eqp01}
    |\eta|(x)\le Ae^{-\gamma|x|}
\end{equation}
for some constants $A,\gamma>0$ with $0\le \gamma <1$, then
\begin{equation}
    \label{eqp02}
    |u|(x)\le Be^{-\gamma|x|}
\end{equation}
for some $B=B(K)$.
\end{lemma}
\begin{proof}
First, for any $\eta\in L^2$, consider the following functional:
\begin{equation}
\label{eq1}
\begin{split}
    &I_{{\eta}}:H^1(\mathbb{R}^2)\to \mathbb{R}\\
    &I_{{\eta}}[v]=\int_{\mathbb{R}^2}\Big(\frac{1}{2}|\nabla v|^2+\frac{1}{2}|\phi|^2v^2-v\eta\Big)dx
\end{split}
\end{equation}
If $v\in H^1$, then $|I_{{\eta}}[v]|<\infty$, since $|\phi|^2\le 1$. 
\begin{proposition}
\label{p1.n1}
For $\eta\in L^2(\mathbb{R}^2)$, the functional $I_{\eta}$ has a unique minimizer $u\in H^1$. Furthermore:
\begin{equation}
    \label{eqp04}
    ||u||_{H^1}\le C ||\eta||_{L^2}
\end{equation}
for a constant $C$.
\end{proposition}
\begin{proof}
In the following proof, all of the constants can be chosen in a way to depend uniformly on $p$.
\\
First, we prove that there exist constants $a\ge 0$ and $b\ge 0$ such that:
\begin{equation}
    \label{eq6.new1}
    I_{\eta}[v]\ge a||v||_{_{H^1}}^2-b||\eta||_{L^2}^2
\end{equation}
To prove this, first consider the following functional:
\begin{equation}
    \label{eq7}
    \begin{split}
        &J:H^1(\mathbb{R}^2)\to \mathbb{R}\\
        &J[v]=\int_{\mathbb{R}^2}\Big(\frac{1}{2}|\nabla v|^2+\frac{1}{2}|\phi|^2v^2\Big)dx
    \end{split}
\end{equation}
\begin{proposition}
\label{c1.n1}
There exists a constant $C\ge 0$ such that
\begin{equation}
    \label{eq8}
    J[v]\ge C||v||_{_{H^1}}^2
\end{equation}
for any $v\in H^1\big(\mathbb{R}^2\big)$.
\end{proposition}
\begin{proof}
Suppose not. Then we can find a sequence $\{v_n\}\subset H^1(\mathbb{R}^2)$ such that $J[v_n]\to 0$ and $||v_n||_{_{H^1}}=1$ for any $n$. We know that $|\phi|(x)\to 1$ as $|x|\to\infty$. So, we can find $r>0$ such that $|\phi|(x)>\frac{1}{2}$ if $|x|>r$. Now, for any $n$ consider the functions:
\begin{equation}
    \label{eq9}
    \begin{split}
    &f_n= v_n\Big|_{_{B_{2r}(0)}}\\
    &g_n=v_n\Big|_{_{\mathbb{R}^2\backslash B_{r}(0)}} 
    \end{split}
\end{equation}
Also, consider the functional
\begin{equation}
    \label{eq91}
    \begin{split}
        &J_1: H^1\big(B_{2r}(0)\big)\to \mathbb{R}\\
        &J_1[v]=\int_{B_{2r}(0)}\Big(\frac{1}{2}|\nabla v|^2+\frac{1}{2}|\phi|^2v^2\Big)dx
    \end{split}
\end{equation}
and
\begin{equation}
\label{eq92}
    \begin{split}
        &J_2: H^1\big(\mathbb{R}^2\backslash B_{r}(0)\big)\to \mathbb{R}\\
        &J_2[v]=\int_{\mathbb{R}^2\backslash B_{r}(0)}\Big(\frac{1}{2}|\nabla v|^2+\frac{1}{2}|\phi|^2v^2\Big)dx
\end{split}
\end{equation}
Then, we have:
\begin{align}
    &0\le J_1[f_n]\le J[v_n]\label{eq101}\\
    &0\le J_2[g_n]\le J[v_n]\label{eq102}
    \end{align}
Since $J[v_n]\to 0$ as $n\to \infty$, then by the above inequalities , we have:
\begin{align}
&\lim_{n\to\infty}J_1[f_n]=0\label{eq103}\\
&\lim_{n\to\infty}J_2[g_n]=0\label{eq104}
\end{align}
Now, note that since $|\phi|>\frac{1}{2}$ for $|x|> r$, then using the definition of $J_2$, we can say:
\begin{equation}
    \label{eq11}
    J_2[g_n]\ge \frac{1}{8}||g_n||_{_{H^1}}^2
\end{equation}
Using \eqref{eq104} and \eqref{eq11}, we have:
\begin{equation}
    \label{eq12}
    \lim_{n\to\infty}||g_n||_{_{H^1}}=0 
\end{equation}
Now, we want to prove that:
\begin{equation}
    \label{eq13}
    \lim_{n\to\infty}||f_n||_{_{H^1}}=0 
\end{equation}
Using the definition of $J_1$, and \eqref{eq103}, we can say:
\begin{equation}
    \label{eq131}
    \lim_{n\to\infty}\int_{B_{2r}(0)}|\nabla f_n|^2dx=0
\end{equation}
Therefore, in order to prove \eqref{eq13}, it suffices to show that:
\begin{equation}
    \label{eq132}
    \lim_{n\to\infty}\int_{B_{2r}(0)}f_n^2=0
\end{equation}
Suppose that:
\begin{equation}
    \label{eq14}
    c_n=\frac{1}{|B_{2r}(0)|}\int_{B_{2r}(0)}f_n
\end{equation}
and:
\begin{equation}
\label{eq15}
    h_n=f_n-c_n
\end{equation}
Then, according to one of the Poincare inequalities for a ball, we have:
\begin{equation}
    \label{eq16}
    ||h_n||_{_{L^2(B_{2r}(0))}}\le C ||\nabla f_n||_{_{L^2(B_{2r}(0))}}
\end{equation}
for some constant $C$. Now, according to \eqref{eq131}, and \eqref{eq16}, we have:
\begin{equation}
    \label{eq17}
    \lim_{n\to\infty} ||h_n||_{_{L^2(B_{2r}(0))}}=0
\end{equation}
According to \eqref{eq15}, \eqref{eq17}, in order to prove \eqref{eq132}, it suffices to show that
\begin{equation}
    \label{eq18.new}
    \lim_{n\to\infty}c_n=0
\end{equation}
Suppose not. Then there exists $m>0$, and a subsequence $\{c_{i_n}\}$ of $\{c_n\}$ such that $|c_{i_n}|>m$ for any $n$. Without loss of generality, we can assume that $i_n=n$. So for any $n$, we have:
\begin{equation}
    \label{eq19}
    c_n>m>0
\end{equation}
Now, according to \eqref{eq103}, and the definition of $J_1$, we can say:
\begin{equation}
    \label{eq20}
    \lim_{n\to\infty}\int_{B_{2r}(0)}|\phi|^2\big(h_n+c_n\big)^2dx=0
\end{equation}
Therefore:
\begin{equation}
    \label{eq21}
     \lim_{n\to\infty}\int_{B_{2r}(0)}|\phi|^2\big(h_n^2+c_n^2+2h_nc_n\big)dx=0
\end{equation}
But, according to \eqref{eq17}, and the fact that $\phi$ is continuous, we have:
\begin{equation}
    \label{eq22}
    \lim_{n\to\infty}\int_{B_{2r}(0)}|\phi|^2h_n^2dx=0
\end{equation}
So, by \eqref{eq21}, \eqref{eq22}, we have:
\begin{equation}
    \label{eq23}
    \lim_{n\to\infty}\int_{B_{2r}(0)}|\phi|^2\big(c_n^2+2h_nc_n\big)dx=0
\end{equation}
According to \eqref{eq19}, \eqref{eq23}, we have:
\begin{equation}
    \label{eq24}
    \lim_{n\to\infty}\int_{B_{2r}(0)}|\phi|^2\big(c_n+2h_n\big)dx=0
\end{equation}
But according to \eqref{eq17}, we have:
\begin{equation}
    \label{eq25}
    \lim_{n\to\infty}\int_{B_{2r}(0)}|\phi|^2h_n dx=0
\end{equation}
Now, using \eqref{eq24}, \eqref{eq25}, we can say:
\begin{equation}
    \label{eq26}
    \lim_{n\to\infty}c_n=0
\end{equation}
which is contradiction with \eqref{eq19}. This shows that \eqref{eq18.new} holds, and this implies \eqref{eq13}, as mentioned before. Now, note that:
\begin{equation}
    \label{eq27}
    ||v_n||_{_{H^1(\mathbb{R}^2)}}^2\le ||f_n||_{_{H^1}}^2+||g_n||_{_{H^1}}^2
\end{equation}
Using \eqref{eq12}, \eqref{eq13}, and \eqref{eq27}, we can say that:
\begin{equation}
    \label{eq28}
    \lim_{n\to\infty}||v_n||_{_{H^1(\mathbb{R}^2)}}=0
\end{equation}
But this is a contradiction, since in the beginning we assumed that $||v_n||_{_{H^1}}=1$ for any $n$. This finishes the proof of proposition \ref{c1.n1}.
\end{proof}
Now, we want to find $a,b\ge 0$ such that \eqref{eq6.new1} holds. We have:
\begin{equation}
    \label{eq281}
    I_{\eta}[v]=J[v]-\int_{\mathbb{R}^2}v\eta  dx
\end{equation}
Consider the constant $C>0$ which satisfies \eqref{eq8}. Now, note that:
\begin{equation}
    \label{eq29}
    \int_{\mathbb{R}^2}v\eta  dx \le \frac{C}{4}||v||_{{H^1}}^2+\frac{1}{C}||\eta||_{_{L^2}}^2
\end{equation}
Using \eqref{eq8}, \eqref{eq281}, and \eqref{eq29}, we have:
\begin{equation}
    \label{eq30}
    I_{\eta}[v]\ge \frac{3C}{4} ||v||_{_{H^1}}^2-\frac{1}{C}||\eta||_{_{L^2}}^2
\end{equation}
Therefore \eqref{eq6.new1} holds for some constants $a,b>0$. 
\\
\\
Now, we want to show that the functional $I_{\eta}$ has a minimizer in $H^1$. Note that \eqref{eq6.new1} tells us that $\inf(I_{\eta})\neq (-\infty)$. Now, consider a minimizing sequence $\{v_n\}$. According to \eqref{eq6.new1}, we can say that $\{v_n\}$ is bounded in $H^1$. Therefore, there exists a subsequence $\{v_{n_k}\}_{_{k=1}}^{\infty}$ of $\{v_n\}$ and $v_0\in H^1$ such that 
\begin{equation}
    \label{eq31}
    v_{n_{k}}\to v_0 \hspace{.2cm} \textbf{weakly in $H^1$}
\end{equation}
Now, note that the functional $I_{\eta}$ is lower semicontinuous with respect to the weak topology of $H^1$. Therefore, $v_0$ is a minimizer for $I_{\eta}$.
To show the uniqueness of the minimizer, suppose that $u_1$, $u_2$ are two minimizers of $I_{\eta}$. Then, we have:
\begin{equation}
    \label{eq3121}
    I_{\eta}\big[\frac{u_1+u_2}{2}\big]=\frac{1}{2}\big(I_{\eta}[u_1]+I_{\eta}[u_2]\big)-\int_{\mathbb{R}^2}\Big[\frac{1}{4}|\nabla(u_1-u_2)|^2+|\phi|^2\frac{1}{4}(u_1-u_2)^2\Big]
\end{equation}
Therefore, we have $\nabla u_1=\nabla u_2$ in $L^2$, and since $u_1,u_2\in {L}^2$, then we deduce that $u_1=u_2$. So the minimizer is unique.
\\
\\
Now, to prove \eqref{eqp04}, note that according to \eqref{eq30}, we have:
\begin{equation}
    \label{eq311}
   \frac{3C}{4}||u||_{_{H^1}}^2-\frac{1}{C}||\eta||_{_{L^2}}^2\le I[u]\le I[0]=0
\end{equation}
Therefore,
\begin{equation}
    \label{eq312}
    ||u||_{_{H^1}}^2\le \frac{4}{3C^2}||\eta||_{_{L^2}}^2
\end{equation}
Note that the constant $C$ above depends only on $\phi$, since it was the constant we obtained in proposition \eqref{c1.n1}. This proves \eqref{eqp04}.
\end{proof}
\begin{proposition}
\label{prop2}
Suppose that $\eta\in L^2{R}^2)$. If $u$ is the minimizer of $I_{\eta}$ in $H^1(\mathbb{R}^2)$, then it satisfies the equation
\begin{equation}
\label{eqp05}
-\Delta u+|\phi|^2u=\eta
\end{equation}
in the weak sense.
\end{proposition}
\begin{proof}
Suppose that $v\in C^{\infty}_c(\mathbb{R}^2)$. Then, for any $c\in \mathbb{R}$, we have:
\[
I_{\eta}(u)\le I_{\eta}(u+cv)
\]
Therefore, 
\[
\int_{\mathbb{R}^2}\nabla u.\nabla v+|\varphi|^2 uv-v\eta=0
\]
\end{proof}
\begin{proposition}
\label{prop3}
Suppose that $\eta\in C^{\infty}_{c}(\mathbb{R}^2)$, and $u\in{H}^1(\mathbb{R}^2)$ satisfies the equation
\begin{equation}
\label{eqp051}
-\Delta u+|\phi|^2u=\eta
\end{equation}
in the weak sense. Then, $u\in{H}^2(\mathbb{R}^2)$, and:
\begin{equation}
    \label{eq511}
    ||u||_{H^2}\le C||\eta||_{L^2}
\end{equation}
where $C$ is a constant.
\end{proposition}
\begin{proof}
First, we use the following theorem from \cite{Evans}:
 \begin{theorem}[Theorem 1 in Page 329 of \cite{Evans}]
    \label{thm1.evans}
    Suppose that $U$ is a bounded and open subset of $\mathbb{R}^n$.
        Consider the elliptic operator
        \begin{equation}
            \label{eqa1}
            Lv=-\sum_{i,j=1}^n\big(a^{ij}v_{x_i}\big)_{x_j}+\sum_{i=1}^n b^{i}(x)v_{x_i}+c(x)v
        \end{equation}
        Assume that
        \begin{equation}
            \label{eqa2}
            a^{ij}\in C^{1}(U), \hspace{.2cm} b^i,c \in L^{\infty}(U)\hspace{.2cm} \big(i,j=1,2,\cdots,n\big) 
        \end{equation}
    Suppose that $v\in H^1(U)$ is a weak solution of the equation
    \begin{equation}
        \label{eqa3}
        Lv=f \hspace{.2cm}\textbf{in}\hspace{.2cm} U
    \end{equation}
    for some $f\in L^2(U)$. Then
    \begin{equation}
        \label{eqa3}
        v\in H^2_{loc}(U)
    \end{equation}
    \end{theorem}
According to theorem \eqref{thm1.evans}, we can say that $u\in H^2_{loc}(\mathbb{R}^2)$. Therefore, $\Delta u \in {L}^2_{loc}(\mathbb{R}^2)$. Now, note that:
\begin{equation}
\label{eq35}
    \Delta u= |\phi|^2u-\eta
\end{equation}
Therefore $\Delta u\in L^2(\mathbb{R}^2)$. Now, consider a smooth cutoff function $\eta:\mathbb{R}^2\to\mathbb{R}$ such that $\eta=1$ for $|x|\le 1$, and $\eta=0$ for $|x|\ge 2$. Consider the functions:
\begin{equation}
    \label{eq36}
    \eta_r(x)=\eta\Big(\frac{x}{r}\Big)
\end{equation}
Now, define the functions:
\begin{equation}
    \label{eq37}
    u_r=u.\eta_r
\end{equation}
Then, note that $u_r\in H^2(\mathbb{R}^2)$, because $u\in H^2_{loc}(\mathbb{R}^2)$. We have:
\begin{equation}
    \label{eq38}
    \Delta u_r=\big(\Delta u\big)\eta_r+2\nabla u. \nabla \eta_r+ u \Delta \eta_r
\end{equation}
Therefore, 
\begin{equation}
    \label{eq39}
    ||\Delta u_r||_{L^2}\le ||\Delta u||_{L^2} + 2||\nabla u. \nabla \eta_r||_{L^2}+||u\Delta \eta_r||_{L^2}
\end{equation}
But note we can find $C$ such that:
\begin{equation}
    \label{eq40}
    \begin{split}
      &|\nabla \eta_r|\le C\hspace{.2cm} \forall \hspace{.1cm} r>1\\
      &|\Delta \eta_r|\le C\hspace{.2cm} \forall \hspace{.1cm} r>1
    \end{split}
\end{equation}
Therefore, by \eqref{eq39}, \eqref{eq40}, we have
\begin{equation}
    \label{eq41}
    ||\Delta u_r||_{L^2}\le ||\Delta u||_{L^2}+ 3C||u||_{H^1}\hspace{.2cm} \forall \hspace{.1cm} r>1
\end{equation}
But, using \eqref{eq35} and the fact that $|\phi|\le 1$, we have:
\begin{equation}
    \label{eq411}
    ||\Delta u||_{L^2}\le ||u||_{L^2}+||\eta||_{L^2} 
\end{equation}
So, by \eqref{eq41} and \eqref{eq411}, we have:
\begin{equation}
    \label{eqg1}
    ||\Delta u_r||_{L^2}\le C ||u||_{H^1}+||\eta||_{L^2}\hspace{.2cm} \forall \hspace{.1cm} r>1
\end{equation}
for a constant $C$.
\\
\\
Now, note that according to propositions \ref{p1.n1} and \ref{prop2}, the functional $I_{\eta}$ has a unique minimizer $u_{\eta}$ in $H^1$ which satisfies the equation:
\begin{equation}
    \label{eq4121}
    -\Delta u_{\eta}+|\phi|^2u_{\eta}=\eta
\end{equation}
in the weak sense. Using this and \eqref{eqp051}, we have:
\begin{equation}
    \label{eq4122}
    -\Delta \big(u_{\eta}-u\big)+|\phi|^2(u_{\eta}-u)=0
\end{equation}
in the weak sense. Since $(u_{\eta}-u)\in H^1(\mathbb{R}^2)$, then \eqref{eq4122} implies that:
\begin{equation}
    \label{eq4123}
    \int_{\mathbb{R}^2}\big|\nabla(u_{\eta}-u)\big|^2+|\phi|^2|u_{\eta}-u|^2=0
\end{equation}
which implies that $u=u_{\eta}$. Therefore, $u$ is the minimizer of $I_{\eta}$ in $H^1$. Therefore, according to proposition \ref{p1.n1}, we have:
\begin{equation}
    \label{eq4124}
    ||u||_{H^1}\le C||\eta||_{L^2}
\end{equation}
where $C$ is a constant. Using \eqref{eqg1} and \eqref{eq4124}, we have:
\begin{equation}
    \label{eq42}
    ||\Delta u_r||_{L^2}\le K ||\eta||_{L^2}\hspace{.2cm} \forall \hspace{.1cm} r>1
\end{equation}
for some constant $K$. Now, we use the following theorem:
\begin{theorem}
\label{thm2}
Suppose that $v\in H^2(\mathbb{R}^n)$. Then, we have:
\begin{equation}
\label{eqa4}
\int_{\mathbb{R}^n}|\Delta v|^2 dx= \sum_{i,j=1}^n\int_{\mathbb{R}^n}|v_{x_ix_j}|^2dx
\end{equation}
\end{theorem}
Using \eqref{eq42}, theorem \eqref{thm2}, and the fact that $u_r\in H^2(\mathbb{R}^2)$, we have:
\begin{equation}
    \label{eq43}
    \int_{\mathbb{R}^2}|(u_r)_{x_ix_j}|^2\le K||\eta||_{L^2}\hspace{.2cm} \forall \hspace{.1cm} r>1
\end{equation}
But, note that:
\begin{equation}
    \label{eq44}
    \int_{B_{_{\frac{r}{2}}}(0)}|u_{x_ix_j}|^2\le \int_{\mathbb{R}^2}|(u_r)_{x_ix_j}|^2
\end{equation}
Now, using \eqref{eq43}, and \eqref{eq44}, we have:
\begin{equation}
    \label{eq45}
    \int_{B_{_{\frac{r}{2}}}(0)}|u_{x_ix_j}|^2\le K||\eta||_{L^2}\hspace{.2cm} \forall \hspace{.1cm} r>1
\end{equation}
This implies that $u_{x_ix_j}\in L^2(\mathbb{R}^2)$, and therefore $u\in H^2(\mathbb{R}^2)$. Furthermore, \eqref{eq4124}, \eqref{eq45} imply \eqref{eq511}. This finishes the proof of proposition \ref{prop3}.
\end{proof}
Now, suppose that $\{\eta_n\}_{_{n=1}}^{\infty}\subset C^{\infty}_{c}(\mathbb{R}^2)$ is such that $\eta_n\to \eta$ in $L^2(\mathbb{R}^2)$. According to propositions \eqref{p1.n1},\eqref{prop2}, and \eqref{prop3}, there exist $\{u_n\}_{_{n=1}}^{\infty}\subset H^2(\mathbb{R}^2)$ solving the equations:
\begin{equation}
    \label{h1}
    -\Delta u_n+|\phi|^2 u_n=\eta_n
\end{equation}
Furthermore, they satisfy:
\begin{equation}
    \label{h2}
    ||u_n||_{H^2}\le C ||\eta_n||_{L^2}
\end{equation}
where $C$ is a constant. According to \eqref{h2}, the sequence $\{u_n\}_{_{n=1}}^{\infty}$ is bounded in $H^2$. Therefore, without loss of generality, we can assume that it is Cauchy in the weak topology of $H^2(\mathbb{R}^2)$. So, suppose that $u_n\to u$ in the weak topology of $H^2(\mathbb{R}^2)$, for some $u\in H^2(\mathbb{R}^2)$. Then, since \eqref{h1} holds in the weak sense for every $n$ and $\eta_n\to\eta$ in $L^2$, then we deduce that $u$ satisfies equation \eqref{eqp0} in the weak sense. Furthermore, according to \eqref{h2}, we have:
\begin{equation}
    \label{h3}
    ||u||_{H^2}\le C||\eta||_{L^2}
\end{equation}
for a constant $C$. This proves \eqref{eqq0}.
\\
\\
Furthermore, note that if $u_1,u_2\in H^2$ satisfy equation \eqref{eqp0}, then we have
\begin{equation}
    \label{h4}
    \int_{\mathbb{R}^2}\big|\nabla(u_1-u_2)\big|^2+|\phi|^2|u_1-u_2|^2=0
\end{equation}
which implies that $u_1=u_2$. Therefore, the solution to equation \eqref{eqp0} is unique.
\\
\\
Now, suppose that $\zeta\in H^2(\mathbb{R}^2)$ and
\begin{equation}
    \label{eq5111}
    |\eta|(x)\le Me^{-\gamma|x|}
\end{equation}
for some $M, \gamma>0$. First, we prove that:
\begin{proposition}
\label{prop4}
\begin{equation}
    \label{eq46}
    \lim_{r\to\infty}\sup_{|x|=r}|u(x)|=0
\end{equation}
\end{proposition}
\begin{proof}
According to the Sobolev embedding theorem, we know that $C^{0,\gamma}\big(B_r(x)\big)\subset H^2\big(B_r(x)\big)$ for any ball $B_r(x)\subset \mathbb{R}^2$, and any $0<\gamma<1$. Furthermore, there exists a constant $C=C(\gamma, r)$ such that
\begin{equation}
     \label{eqq1}
     ||u||_{_{C^{0,\gamma}(B_r(x))}}\le C ||u||_{_{H^2(B_r(x))}}
 \end{equation}
Therefore, since $u\in H^2(\mathbb{R}^2)$, we can say that $u\in C^{0,\gamma}(B_r(x))$ for any ball $B_r(x)\subset \mathbb{R}^2$, and any $0<\gamma<1$. Furthermore, there exists a constant $C=C(\gamma,r)$ such that
\begin{equation}
     \label{eqq2}
     ||u||_{_{C^{0,\gamma}(B_r(x))}}\le {C} ||u||_{_{H^2(\mathbb{R}^2)}}
 \end{equation}
for any $x\in \mathbb{R}^2$. Now, take arbitrary $m>0$. According to \eqref{eqq2}, there exists $\delta>0$ independent of $x$ such that if $|u(x)|>m$, then $|u(y)|>\frac{m}{2}$ for any $y$ with $|x-y|<{\delta}$. Therefore, if $|u(x)|>m$, there exists $\delta>0$ independent of $x$ such that:
\begin{equation}
    \label{eq47}
    \int_{B_{\delta}(x)}|u|^2\ge \pi\delta\frac{m^2}{4}
\end{equation}
On the other hand, since $u\in L^2(\mathbb{R}^2)$, then there exists $R>0$ such that:
\begin{equation}
    \label{eq48}
    \int_{\mathbb{R}^2\backslash B_R(0)}|u|^2 dx < \pi\delta\frac{m^2}{4}
\end{equation}
According to \eqref{eq48}, we can say that \eqref{eq47} does not hold for $|x|>R+\delta$. This implies that for $|x|> R+\delta$, we have $|u(x)|< m$. (Note that \eqref{eq47} holds under the assumption $|u(x)|>m$.) This proves \eqref{eq46}.
\end{proof}
Now, we prove that:
\begin{proposition}
   \label{prop5}
   There exists a constant $C$ such that:
   \begin{equation}
       \label{eq49}
       ||u||_{_{L^{\infty}}}< CM
   \end{equation}
\end{proposition}
\begin{proof}
 Similar to the proof of proposition \eqref{prop4}, note that for any ball $B_r(x)\subset \mathbb{R}^2$, and any $0<\gamma<1$, according to the Sobolev embedding theorem, we have:
 \begin{equation}
     \label{eq50}
     ||u||_{_{C^{0,\gamma}(B_r(x))}}\le {C} ||u||_{_{H^2(\mathbb{R}^2)}}
 \end{equation}
 where $C=C\big(r,\gamma\big)$ is a constant. This implies that:
 \begin{equation}
     \label{eq51}
     ||u||_{_{L^{\infty}(B_r(x))}}\le {C} ||u||_{_{H^2(\mathbb{R}^2)}}
 \end{equation}
 for some universal constant ${C}$. Therefore:
   \begin{equation}
       \label{eq52}
       ||u||_{_{L^{\infty}(\mathbb{R}^2)}}\le {C} ||u||_{_{H^2(\mathbb{R}^2)}}
   \end{equation}
   for the same universal constant ${C}>0$. Now, according to \eqref{h3}, and \eqref{eq52}, we get:
   \begin{equation}
       \label{eq53}
       ||u||_{_{L^{\infty}(\mathbb{R}^2)}}\le C ||\zeta||_{L^2}
   \end{equation}
   where $C$ is a constant. But note that
   \begin{equation}
       \label{eq54}
       ||\eta||_{L^2}\le M||e^{-\gamma |x|}||_{_{L^2(\mathbb{R}^2)}}
   \end{equation}
   Using \eqref{eq53} and \eqref{eq54}, we get the statement of the proposition.
\end{proof}
   Now, to prove the exponential decay of $u$, consider the function:
   \begin{equation}
       \label{eq55}
       s(x)=Ne^{-\gamma|x|}
   \end{equation}
   where $N=N_{\gamma}(M,\phi)$ is a constant which we will find it later. Then, we have:
   \begin{equation}
       \label{eq56}
       \Delta s(x)= \big(-\gamma |x|^{-1}+ \gamma^2\big)s(x)
   \end{equation}
Therefore:
\begin{equation}
    \label{eq57}
    \Delta s(x)- |\phi|^2s(x)=\big(-\gamma |x|^{-1}+\gamma^2-|\phi|^2\big)s(x)
\end{equation}
Using \eqref{eq57}, and the fact that $u$ satisfies equation \eqref{eqp0}, we have:
\begin{equation}
    \label{eq58}
    \begin{split}
        \Delta(s\pm u)(x)-|\phi|^2 (s\pm u)(x)&=\big(-\gamma |x|^{-1}+\gamma^2-|\phi|^2\big)s(x)\mp \zeta (x)
        \\&\le N\big(-\gamma |x|^{-1}+\gamma^2-|\phi|^2\big)e^{-\gamma|x|}+Me^{-\gamma|x|}
    \end{split}
\end{equation}
Therefore:
\begin{equation}
    \label{eq59}
    \Delta(s\pm u)(x)-|\phi|^2 (s\pm u)(x)\le N\big(-\gamma |x|^{-1}+\gamma^2-|\phi|^2\big)e^{-\gamma|x|}+Me^{-\gamma|x|}
\end{equation}
\begin{proposition}
\label{prop6}
There exist constants $R=R(\gamma)$, and $N=N(M,\gamma)$ such that
\begin{equation}
    \label{eq60}
    N\big(-\gamma |x|^{-1}+\gamma^2-|\phi|^2\big)e^{-\gamma|x|}+Me^{-\gamma|x|} <0 \hspace{.3cm}\textbf{if}\hspace{.2cm}|x|>R
\end{equation}
\end{proposition}
and
\begin{equation}
    \label{eq61}
    \big(s\pm u\big)\Big|_{|x|=R} >0
\end{equation}
\begin{proof}
According to theorem 8.1 of \cite{JT80}, we have:
\begin{theorem}
\label{thm3}
For every coupling constant $\lambda>0$, given $\epsilon>0$, we can find $K=K(\lambda,\phi,\epsilon)>0$  such that:
\begin{equation}
    \label{eq62}
    1-|\phi|^2\le Ke^{-(1-\epsilon)m_L|X|}
\end{equation}
where $m_L=\min\{\lambda^{\frac{1}{2}},2\}$.
\end{theorem}
Now, using \eqref{eq62}, and the fact that $\gamma<1$, we can find $R=R(p,\gamma)$ such that:
\begin{equation}
    \label{eq63}
    -\gamma |x|^{-1}+\gamma^2-|\phi|^2 < \frac{\gamma^2-1}{2} \hspace{.3cm}\textbf{if}\hspace{.2cm}|x|>R
\end{equation}
Therefore, if we choose $N=N(M,\gamma)$ such that:
\begin{equation}
    \label{eq64}
    N> \frac{2M}{1-\gamma^2}
\end{equation}
then, condition \eqref{eq60} is satisfied.
\\
\\
Now, note that according to proposition \eqref{prop5}, we have:
\begin{equation}
    \label{eq65}
    \sup_{|x|=R}u< L
\end{equation}
where $L={L}\big(M,\gamma\big)$ is a constant. Therefore, if we choose $N=N(M,\gamma)$ such that:
\begin{equation}
    \label{eq66}
    N>e^{\gamma R} L
\end{equation}
then condition \eqref{eq61} is satisfied. Therefore, if we choose $R=R(\gamma)$ as described, and $N=N(M,\gamma)$ such that both conditions \eqref{eq64}, \eqref{eq66} are satisfied, then both conditions \eqref{eq60}, \eqref{eq61} will be satisfied.
Now, using \eqref{eq59}, and proposition \eqref{prop6}, we see that for the constants $R$, and $N=N(M,\gamma)$, we have
\begin{align}
    &\Delta\big(s\pm u\big)(x)-|\phi|^2\big(s\pm u\big)(x)<0 \hspace{.3cm}\textbf{if}\hspace{.2cm}|x|>R\label{eq67}
    \\& \big(s\pm u\big)\Big|_{|x|=R}>0\label{eq68}
\end{align}
But according to proposition \eqref{prop4}, we deduce that:
\begin{equation}
    \label{eq69}
    \lim_{r\to\infty}\sup_{|x|=r}\big(s\pm u\big)\Big|_{|x|=r}=0
\end{equation}
Using equations \eqref{eq67}, \eqref{eq68} and \eqref{eq69}, and the maximum principle, we have:
\begin{equation}
    \label{eq70}
    (s\pm u)(x)>0\hspace{.3cm}\textbf{if}\hspace{.2cm}|x|>R
\end{equation}
\end{proof}
Equation \eqref{eqq0} and proposition \ref{prop6} implies \eqref{eqp02} and this finishes the proof of lemma \ref{al1}.
\end{proof}

\begin{proposition}
\label{p1}
Suppose that $U\subset \mathbb{R}^N$ and $f:U\to {L}^2\big(\mathbb{R}^2,\mathbb{R}\big)$ and $h:U\to M_N$ are differentiable. Suppose that $u:U\to H^2\big(\mathbb{R}^2,\mathbb{R}\big)$ solves the equation
\begin{equation}
    \label{eq1ln41}
    \Delta u(p)-|\phi|^2(h(p))u(p)=f(p)
\end{equation}
for every $p\in U$. Then, $u$ is differentiable and for every $p\in U$, we have:
\begin{equation}
    \label{eq1}
    \Delta Du(p)-|\phi|^2\big(h(p)\big)Du(p)=Df(p)+\big(D|\phi|^2(h(p))\big)u(p)
\end{equation}
where $D$ represents differentiation with respect to any of the spatial variables in $U$.
\end{proposition}
\begin{proof}
\begin{claim}
\label{c1}
For every $p\in U$, there exists $C>0$ and $\delta>0$ such that for every $q\in U$ with $|p-q|<\delta$, we have
\begin{equation}
    \label{eq2}
    \begin{split}
     \big{|}\big{|}u(p)-u(q)\big{|}\big{|}_{_{H^2}}< C|p-q|
    \end{split}
\end{equation}
\end{claim}
\begin{proof}
Let 
\[
w_{p,q}=u(q)-u(p)
\]
We have:
\begin{equation}
    \label{eq3}
    \begin{split}
        \Delta w_{p,q}-|\phi|^2\big(h(p)\big)w_{p,q}&=\Big(f(q)-f(p)\Big)\\
        &+\Big(|\phi|^2\big(h(q)\big)-|\varphi|^2\big(h(p)\big)\Big)u(q)
    \end{split}
\end{equation}
There exists $\delta_1=\delta_1(p)>0$ and $C_1>0$ such that if $|p-q|<\delta_1$, then
\begin{equation}
    \label{eq4}
    \big{|}\big{|}|\phi|^2\big(h(q)\big)-|\phi|^2\big(h(p)\big{|}\big{|}_{_{L^{\infty}}}< C_1|p-q|
\end{equation}
Therefore, since $f$ is differentiable, for every $p\in U$, there exists $\delta_2, C_2>0$ such that for every $q\in U$ with $|p-q|<\delta_2$,
\begin{equation}
    \label{eq5}
    \Big{|}\Big{|}\big(f(q)-f(p)\big)+\Big(|\phi|^2\big(h(q)\big)-|\phi|^2\big(h(p)\big)\Big)u(q)\Big{|}\Big{|}_{L^2(\mathbb{R}^2)}\le C_2|p-q|
\end{equation}
According to lemma \ref{al1}, \eqref{eq3}, and \eqref{eq5}, we deduce the statement of claim.
\end{proof}
Using the fact that $f$ is differentiable, for every $p\in U$ and $\tau\in T_p U$, there exists a function $v_{_{p,\tau}}\in H^2$ which satisfies:
\begin{equation}
    \label{eq35.25}
    \Delta v_{_{p,\tau}}-|\phi|^2\big(h(p)\big)v_{_{p,\tau}}=Df(p)(\tau)+D{ |\phi|^2}(p)(\tau)u(p)
\end{equation}
Suppose that $p+\tau\in U$. Let: 
\begin{equation}
    \label{eq35.25}
    w_{p,\tau}={u(p+\tau)-u(p)-v_{_{p,\tau}}}
\end{equation}
We have:
\begin{equation}
    \label{eq35.26}
    \begin{split}
    \Delta w_{p,\tau}-|\phi|^2w_{p,\tau}&=\Big(f(p+\tau)-f(p)-Df(p)(\tau)\Big)\\
    &+\Big(|\phi|^2(p+\tau)-|\varphi|^2(p)-D|\phi|^2(p)(\tau)\Big)u(p)\\
    &+\Big(|\phi|^2(p+\tau)-|\phi|^2(p)\Big)\Big(u(p+\tau)-u(p)\Big)
    \end{split}
\end{equation}
According to the facts that $f$ and $h$ are differentiable, and claim \ref{c1}, for any $\epsilon>0$, there exists $\delta>0$ such that if $|\tau|<\delta$, then
\begin{equation}
    \label{eq35.27}
    \begin{split}
       &||f(p+\tau)-f(p)-Df(p)(\tau)||_{L^2(\mathbb{R}^2)}\\
       +&||\Big(|\phi|^2(p+\tau)-|\phi|^2(p)-D|\phi|^2(p)(\tau)\Big)u(p)||_{L^2(\mathbb{R}^2)}\\
       +&||\Big(|\phi|^2(p+\tau)-|\phi|^2(p)\Big)\Big(u(p+\tau)-u(p)\Big)||_{L^2(\mathbb{R}^2)}\\
       \le& \epsilon |\tau|
    \end{split}
\end{equation}
 According to \eqref{eq35.26} and \eqref{eq35.27} and lemma \ref{al1}, we deduce that for every $\epsilon>0$, there exists $\delta_1>0$ such that if $|\tau|<\delta_1$, then
 \begin{equation}
     \label{eq35.38}
     ||w_{p,\tau}||_{H^2(\mathbb{R}^2)}\le \epsilon|\tau|
 \end{equation}
 Therefore, $u$ is differentiable.
\end{proof}
 \begin{lemma}
\label{ln42}
Suppose that $U\subset \mathbb{R}^N$ and $f:U\to {L}^2\big(\mathbb{R}^2,\mathbb{R}\big)$ is $m$-times differentiable for $m\in \mathbb{N}$ and $h:U\to M_N$ is $(m+2)$-times differentiable. Suppose that $u:U\to H^2\big(\mathbb{R}^2,\mathbb{R}\big)$ solves the equation
\begin{equation}
    \label{eq1ln41}
    \Delta u(p)-|\phi|^2(h(p))u(p)=f(p)
\end{equation}
for every $p\in U$. Then, $u$ is $m$-times differentiable 
\end{lemma}
\begin{proof}
The base case holds by the previous statement. Suppose that the statement holds for $m$. According to the base case and proposition \ref{p1}, we have:
\begin{equation}
    \label{eq3ln41}
    \Delta Du(p)-|\phi|^2\big(h(p)\big)Du(p)=Df(p)+\big(D|\phi|^2\big(h(p)\big)u(p)
\end{equation}
where $D$ represents differentiation with respect to any spatial direction in $U$. According to the base case, we have:
\begin{equation}
    \label{eq3ln41}
    \Delta Du(p)-|\phi|^2\big(h(p)\big)Du(p)=Df(p)+\big(D|\phi|^2\big(h(p)\big)u(p)
\end{equation}
where $D$ represents differentiation with respect to any spatial direction in $U$. The function $g:U\to H^2\big(\mathbb{R}^2,\mathbb{R}\big)$ is defined by: 
\[
g(p)=\Big(D|\phi|^2(h(p))\Big)u(p)
\]
\begin{claim}
   \label{cl1ln41}
   The function $g$ is $m$-times differentiable. 
\end{claim}
\begin{proof}
According to the induction hypothesis, $u$ is $m$-times differentiable.The function
\[
e:U\to H^2(\mathbb{R}^2,\mathbb{R}) 
\]
defined by
\[
e(p)=\big(D|\phi|^2(h(p))\big)
\]
is $m$-times differentiable. Therefore, by the Sobolev embedding theorem, $g$ is $m$-times differentiable.
\end{proof}
Claim \ref{cl1ln41}, \eqref{eq3ln41} and the induction hypothesis imply that $Du$ is $m$-times differentiable. Therefore, $u$ is $(m+1)$-times differentiable.
\end{proof}
\begin{lemma}
\label{l3}
Suppose that $U$ is an open subset of $\mathbb{R}^2$, $f\in \mathcal{E}_m(\mathbb{R}^2,U,\mathbb{R}^2)$ and $h:U\to M_N$ is $m$-times differentiable and $h(U)$ is a precompact subset of $M_N$ and $D^s(h)$ is bounded for every $s$ with $|s|\le m$. Suppose that $u:\mathbb{R}^2\times U\to \mathbb{R}$ satisfies the equation
\begin{equation}
    \label{eq35.2}
    \Delta u(,p)-|\phi|^2\big(h(p)\big)u(,p)=f(,p)
\end{equation}
for every $p\in U$ and $u(,p)\in H^2$ for every $p\in U$.
Then, $u\in \mathcal{E}_{m-3}(\mathbb{R}^2,U,\mathbb{R}^2)$.
\end{lemma}
\begin{proof}
Suppose that $V=\mathbb{R}^2\times U$. Consider the functions
\begin{equation}
    \label{35.22}
\begin{split}
    &\hat{f}:V\to L^2\big(\mathbb{R}^2,\mathbb{R}\big)\\
    &\hat{f}(x,p)(y)=f\big(y-x,p\big)\hspace{.4cm}\forall x,y\in \mathbb{R}^2, p\in U
\end{split}
\end{equation}

\begin{equation}
    \label{35.221}
\begin{split}
    &\hat{u}:V\to H^2\big(\mathbb{R}^2,\mathbb{R}\big)\\
    &\hat{u}(x,p)(y)=u\big(y-x,p\big)\hspace{.4cm}\forall x,y\in \mathbb{R}^2, p\in U
\end{split}
\end{equation}
\begin{equation}
    \label{35.222}
    \begin{split}
        &\hat{h}:V\to M_2\\
        &\hat{h}(x,p)(y)=h\big(y-x,p\big)\hspace{.4cm}\forall x,y\in \mathbb{R}^2, p\in U
    \end{split}
\end{equation}
The function $\hat{h}$ is $m$-times differentiable. For every $q\in V$, we have
\begin{equation}
    \begin{split}
        \Delta \hat{u}(q)-|\varphi|^2\big(\hat{h}(q)\big)\hat{u}(q)=\hat{f}(q)
    \end{split}
\end{equation}

Suppose that $q\in V$. For any multi-index $r$ with $|r|\le m$, the function $x\to \big(D^r\hat{f}(q)\big)(x)$ has exponential decay as $|x|\to\infty$. Therefore, $\hat{f}$ is $m$-times differentiable. Therefore, according to lemma \ref{ln42}, the function $\hat{u}$ is $m$-times differentiable. Therefore, the function $u$ is $m$-times differentiable and $D^ru(.,p)\in H^2$ for any multi-index $r$ with $|r|\le m$, and for any multi-index $s$ with $|s|\le (m-3)$, we have:
\begin{equation}
    \label{35.223}
    \Delta \Big(D^su(q)\Big)(x)-\Big(D^s\big(|\varphi|^2\big(h(q)\big){u}(q)\big)\Big)(x)=\big(D^sf(q)\big)(x)
\end{equation}
Therefore, by induction on $|s|$, the facts that $h(U)$ is a precompact subset and $f\in \mathcal{E}_m(\mathbb{R}^2,U,\mathbb{R}^2)$, we deduce that $f\in \mathcal{E}_{m-3}(\mathbb{R}^2,U,\mathbb{R}^2)$.
\end{proof}
\begin{lemma}
\label{3d.l1}
Consider a smooth curve $\gamma:\mathbb{R}\to K$ where $K$ is a compact subset of $M_N$. Suppose that
\begin{equation}
\label{3d.l1.eq0}
\phi(y,z)=\phi(y;\gamma(z))
\end{equation}
Also, suppose that $|D\gamma|\le m$ for some $m>0$. Then, there exists a number $\delta>0$ such that if $f\in H^2(\mathbb{R}^3)$ with $||f||_{_{L^\infty}}\le \delta$, then the equation
\begin{equation}
\label{3d.l1.eq1}
(-\Delta+|\phi+f|^2)u=g
\end{equation}
for $g\in L^2(\mathbb{R}^3)$, has a solution $u\in H^2(\mathbb{R}^3)$ with
\begin{equation}
\label{3d.l1.eq2}
||u||_{_{H^2}}\le C||g||_
{_{L^2}}
\end{equation}
where $C=C(K,m,\delta)$.
\end{lemma}
\begin{proof}
The proof goes by the same arguments as in lemma \ref{al1}. But, here we consider the energy quantity:
\begin{equation}
\label{3d.l1.eq3}
Tu=\int_{\mathbb{R}^3}|\nabla u|^2+|\phi+f|^2u^2
\end{equation}
and we prove a coercivity for that:
\begin{claim}
\label{3d.l1.cl1}
If $\delta$ is small enough, then there exists $C=C(K,m,\delta)>0$ such that
\begin{equation}
\label{3d.l1.eq4}
Tu\ge C ||u||_{_{H^1}}^2
\end{equation}
\end{claim}
\begin{proof}
We have:
\begin{equation}
\label{3d.l1.eq5}
\begin{split}
Tu&\ge \int_{\mathbb{R}}\int_{\mathbb{R}^2}(|\nabla_y u|^2+|\phi+f|^2u)
\\
&\ge  \int_{\mathbb{R}}\int_{\mathbb{R}^2}\big(|\nabla_y u|^2+|\phi|^2u^2-\delta^2u^2\big)dydz
\\
&\ge \int_{\mathbb{R}}\int_{\mathbb{R}^2} (C-\delta^2)u^2dydz
\\
&\ge (C-\delta^2)||u||_{_{L^2}}^2
\end{split}
\end{equation}
where $C$ is the constant provided by claim \ref{c1.n1} in the proof of lemma \ref{al1}. 
\\
On the other hand, we have
\begin{equation}
\label{3d.l1.eq6}
Tu\ge ||\nabla u||_{_{L^2}}^2
\end{equation}
Therefore, by \eqref{3d.l1.eq5} and \eqref{3d.l1.eq6}, we have $Tu\ge C||u||_{_{H^1}}^2$.
\end{proof}
Now, the rest of the proof goes in the same way as in lemma \ref{al1}. We look for the minimizers of the functional 
\begin{equation}
\label{3d.l1.eq7}
\begin{split}
&T:H^1(\mathbb{R}^3)\to \mathbb{R}
\\
&T_gu=Tu-\int_{\mathbb{R}^3}gu
\end{split}
\end{equation}
Following claim \ref{3d.l1.cl1}, one can prove that if one considers a minimizing sequence for $T_g$, then it would be bounded in $H^1$ and one can pass to a weakly convergent subsequence whose limit is a minimizer of $T_{g}$. But minimizers of $T_g$ satisfy equations \eqref{3d.l1.eq1}, and then by elliptic regularity one obtains that the minimizer $u\in H^2(\mathbb{R}^3)$.
\end{proof}
\begin{Remark}:
The weakly lower semicontinuity of the functional $T$ follows from its convexity and differentiability of $T$.
\end{Remark}
\begin{Remark}
\label{reg.3d}
By using standard elliptic regularity results, if in the statement of lemma \ref{3d.l1}, $f\in H^m(\mathbb{R}^3)$, then $u\in H^(m+2)(\mathbb{R}^3)$ and 
\begin{equation}
\label{reg.3d.eq1}
||u||_{_{H^{m+2}}}\le C ||f||_{_{H^m}}
\end{equation}
\end{Remark}

\begin{lemma}
\label{al2}
Consider a compact set $K\subset M_N$. Let $p=(\phi,\alpha)\in K$. Then, for any 
\[
\zeta=(\zeta_0,\zeta_1,\zeta_2)\in L^2(\mathbb{R}^2,\mathbb{C})\oplus L^2(\mathbb{R}^2,\mathbb{R})\oplus L^2(\mathbb{R}^2,\mathbb{R})
\]
which is orthogonal to the zero modes at $p$, there exists a unique 
\[\psi=(\tilde{\varphi},\tilde{a}_1,\tilde{a}_2)\in  H^2(\mathbb{R}^2,\mathbb{C})\oplus H^2(\mathbb{R}^2,\mathbb{R})\oplus H^2(\mathbb{R}^2,\mathbb{R})\] 
which is orthogonal to zero modes at $p$ and solves the equation
\begin{equation}
\label{eq1.new}
L[\phi,\alpha]\psi=\zeta
\end{equation}
and
\begin{equation}
\label{eq2.new}
||\psi||_{_{H^2}}\le C ||\zeta||_{_{L^2}}
\end{equation}
Furthermore, if the vector $\zeta$ satisfies the gauge orthogonality condition
\begin{equation}
\label{geq1}
\partial_{x_1}\zeta_1+\partial_{x_2}\zeta_2=(i\phi,\zeta_0)
\end{equation} 
 then $\psi$ satisfies the gauge orthogonality condition:
\begin{equation}
\label{geq2}
\partial_{x_1}\tilde{a}_1+\partial_{x_2}\tilde{a}_2=(i\phi,\tilde{\varphi})
\end{equation}
Furthermore, if the vector $\zeta$ satisfies the exponential decay
\begin{equation}
\label{eqexd.1}
|\zeta(x)|\le Ae^{-\beta{|x|}}
\end{equation}
for some $\gamma$ with $0<\gamma<1$, then there exists numbers $B$ and $R=R(K)$ such that
\begin{equation}
\label{al2.eq1}
\begin{split}
&|\tilde{\varphi}(x)|\le Be^{-\frac{\beta}{2}|x|}\\
&|\tilde{a_j}(x)|\le Be^{-\beta|x|}
\end{split}
\end{equation}
and if $|x|>R$, then
\begin{equation}
\label{al2.eq1.1}
\begin{split}
&|\partial_j\tilde{\varphi}(x)|\le Be^{-\frac{\beta}{2}|x|}\\
&|\partial_j\tilde{a_j}(x)|\le Be^{-\beta|x|}
\end{split}
\end{equation}
for $j=1,2$.
\end{lemma}
\begin{notation}
For any $r>0$, let:
\begin{equation}
    (H^{r})^{\perp}=H^{r}\cap \big(T_{(\phi,\alpha)}M_N\big)^{\perp}
\end{equation}
\end{notation}
\begin{proof}
Consider the functional $I_{_{\zeta}}:(H^{1})^{\perp}\to\mathbb{R}$ defined by:
\begin{equation}
    \label{e1}
    \begin{split}
        I_{_\zeta}[\eta,b]=&\int_{\mathbb{R}^2}\Big[\frac{1}{2}|\nabla b|^2+\frac{1}{2}|D_{\alpha}\eta|^2+\frac{1}{4}(3|\phi|^2-1)|\eta|^2 +\frac{1}{2}|b|^2|\phi|^2\\
        &-2\sum_{j=1}^2\big(i\eta,D_{\alpha_j}\phi\big)b_j-\sum_{j=1}^2\zeta_{_{j}}b_j-(\zeta_{_{0}},\eta)\Big]
    \end{split}
\end{equation}
Note that if $(\eta,b)\in (H^{1})^{\perp}$, then using the fact that $|\phi|$, $D_{\alpha_j}\phi$ are bounded, and $\zeta_{_{j}}\in L^2$, and $\zeta_{_0}\in L^2$, we see that $\big|I_{_{\zeta}}[\eta,b]\big|<\infty$.
\begin{claim}
\label{c1}
The functional $I_{_\zeta}$ has a unique minimizer $(\tilde{\phi},\tilde{a})$ in $(H^{1})^{\perp}$. Furthermore,
\begin{equation}
    \label{m0}
    ||(\tilde{\phi},\tilde{a})||_{_{H^1}}\le C ||\zeta||_{L^2}
\end{equation}
where $C$ is a constant.
\end{claim}
\begin{proof}
Consider the functional $J:H^{1}\to\mathbb{R}$ defined by:
\begin{equation}
    \label{m1}
    \begin{split}
        J[\eta,b]=&\int_{\mathbb{R}^2}\Big[\frac{1}{2}|\nabla b|^2+\frac{1}{2}|D_{\alpha}\eta|^2+\frac{1}{4}(3|\phi|^2-1)|\eta|^2 +\frac{1}{2}|b|^2|\phi|^2\\
        &-2\sum_{j=1}^2\big(i\eta,D_{\alpha_j}\phi\big)b_j\Big]
    \end{split}
\end{equation}

Then, according to (theorem 3.1 of the Stuart's paper)(put the statement somewhere), we have:
\begin{equation}
    \label{m2}
    J[\eta,b]\ge \gamma \big|(\eta,b)\big|_{_{H^1}}^2 \hspace{.5cm}\forall\hspace{.2cm}(\eta,b)\in (H^{1})^{\perp}
\end{equation}
for some constant $\gamma$.
Using \eqref{m2}, we have:
\begin{equation}
    \label{m3}
    \begin{split}
    I_{_\zeta}[\eta,b]&\ge \gamma |(\eta,b)|_{_{H^1}}^2-\int_{\mathbb{R}^2}\big[\sum_{j=1}^2\zeta_{_{j}}b_j+(\zeta_{_{0}},\eta)\big]\\
    &\ge \frac{\gamma}{2}|(\eta,b)|_{_{H^1}}^2-\frac{1}{2\gamma}||\zeta_{_{j}}||_{L^2}^2-\frac{1}{2\gamma}||\zeta_{_{0}}||_{L^2}^2
    \end{split}
\end{equation}
for the constant $\gamma>0$ described above.\\
\\
Now, take a minimizing sequence $\{(\eta_i,b_i)\}_{i=1}^{\infty}\subset H^{1}$ for the functional $I_{_\eta}$. According to \eqref{m3}, this sequence is bounded in $H^{1}$. Therefore, there exists $(\eta_0, b_0)\in H^{1}$ such that $(\eta_i,b_i)\rightharpoonup (\eta_0,b_0)$ in $H^{1}$. Furthermore, since $\{(\eta_i, b_i)\}\subset \big(T_{p}M_N\big)^{\perp}$, then for any $n\in T_{p}M_N$, we have:
\begin{equation}
    \label{m4}
    \big((\eta_i,b_i),n\big)_{L^2}=0
\end{equation}
Now, since $(\eta_i,b_i)\rightharpoonup (\eta_0,b_0)$ in $H^{1}$, by \eqref{m4} we have:
\begin{equation}
    \label{m5}
    \big((\eta_0,b_0),n\big)_{L^2}=0
\end{equation}
Therefore, $(\eta_0,b_0)\in (H^{1})^{\perp}$. Now, using the fact that $I_{\zeta}$ is lower semicontinuous with respect to the weak topology of $H^{1}$, we deduce that $(\eta_0,b_0)$ is a minimizer for $I_{_\zeta}$ on $(H^{1})^{\perp}$.
\\
\\
Now, suppose that $(\eta_1,b^1)$, $(\eta_2,b^2)$ are both minimizers of the functional $I_{\zeta}$ in $(H^{1})^{\perp}$. Then, we have:
\begin{equation}
    \label{m511}
    I_{\zeta}\big[\frac{\eta^1+\eta^2}{2},\frac{b^1+b^2}{2}\big]-\frac{1}{2}\big(I_{_{\zeta}}[\eta^1,b^1]+I_{_{\zeta}}[\eta^2,b^2]\big)=-J\Big[\frac{\eta^1-\eta^2}{2},\frac{b^1-b^2}{2}\Big]
\end{equation}
Now, according to \eqref{m2}, \eqref{m511}, and the fact that $(\eta^1,b^1)$, $(\eta^2,b^2)$ are both minimizers of the functional $I_{\zeta}$ in $(H^{1})^{\perp}$, we deduce that $b^1=b^2$ and $\eta^1=\eta^2$.
\\
\\
Now, suppose that $(\tilde{\phi},\tilde{a},)$ is the minimizer of $I_{_{\zeta}}$ in $(H^{1})^{\perp}$. Then, by \eqref{m3}, we have:
\begin{equation}
\label{m5111}
\begin{split}
    \big|(\tilde{\varphi},\tilde{a})\big|_{H^1}^2&\le \frac{2}{\gamma}\Big[I_{_{\zeta}}[\tilde{\phi},\tilde{a}]+\frac{1}{2\gamma}\sum_{j=1}^2||\zeta_{_{_j}}||_{L^2}^2+\frac{1}{2\gamma}||\eta_{_{0}}||_{L^2}^2\Big]\\
    &\le \frac{2}{\gamma}\Big[I_{_{\zeta}}[0,0]+\frac{1}{2\gamma}\sum_{j=1}^2||\zeta_{_{j}}||_{L^2}^2+\frac{1}{2\gamma}||\zeta_{_{0}}||_{L^2}^2\Big]\\
    &=\frac{1}{\gamma^2}\Big(\sum_{j=1}^2||\zeta_{_{_j}}||_{L^2}^2+||\zeta_{_{0}}||_{L^2}^2\Big)
\end{split}
\end{equation}
This implies \eqref{m0}.
\end{proof}
\begin{definition}
\label{d1}
Suppose that for for $i,j,k\in \{1,2\}$
\[
m_{ij}^k, n_i^k, p^k, \hat{q}_i\in L^{\infty}(\mathbb{R}^2, \mathbb{R})
\]
and
\[
\hat{m}_{ij}, \hat{n}_i^k, \hat{p}^k, q^k\in L^{\infty}(\mathbb{R}^2, \mathbb{C})
\]
and
\[
\zeta_1^k\in L^2(\mathbb{R}^2)
\]
and $\zeta_2\in L^2(\mathbb{R}^2,\mathbb{C})$. Then, we say that  $(u,v)\in H^{1}$ satisfies the equations:
\begin{align}
    &-\sum_{i,j}m^k_{ij}(u_k)_{_{x_ix_j}}+\sum_{i}n^k_i(u_k)_{_{x_i}}+p^ku_k+(q^k,v)=\zeta_1^k\hspace{.3cm}k=1,2\label{m51112}\\
    &-\sum_{i,j}\hat{m}_{ij}D_{\alpha_i}D_{\alpha_j}v+\sum_{i}\hat{n}_iD_{\alpha_i}v+\hat{p}v+\sum_{i}\hat{q}_iu^i=\zeta_2\label{m51113}
\end{align}
in the weak sense if for any $(f,g)\in H^{1}$, we have:
\begin{align}
    &\int_{\mathbb{R}^2}\sum_{i,j}(u_k)_{_{x_i}}(m^k_{ij}f_k)_{_{x_j}}+\Big(\sum_{i}n^k_i(u_k)_{_{x_i}}+p^ku_k+(q^k,v)\Big)f_k=\int_{\mathbb{R}^2}\zeta_1^kf_k \hspace{.3cm}{k=1,2}\label{m51114}\\
    &\int_{\mathbb{R}^2}\sum_{i,j}\big(D_{\alpha_j}v,D_{\alpha_i}(\hat{m}_{ij}g)\big)+\Big(\sum_{i}\hat{n}_iD_{\alpha_i}v+\hat{p}v+\sum_{i}\hat{q}_iu^i,g\Big)=\int_{\mathbb{R}^2}(\zeta_2,g)\label{m51115}
\end{align}
\end{definition}
\begin{claim}
\label{c11}
The equation
\begin{equation}
\label{m601}
L[\phi,\alpha](\eta,b)=\zeta
\end{equation}
has at most one weak solution in the space $(H^{1})^{\perp}$.
\end{claim}
\begin{proof}
Suppose that $(\eta_1,b_1), (\eta_2,b_2)\in (H^{1})^{\perp}$ are two solutions for the equation \eqref{m601} in the weak sense. Let
\[
(\eta,b)=(\eta_1,b_1)-(\eta_2,b_2)
\]
Then, $(\eta,b)$ satisfies the equations
\begin{equation}
\label{m602}
L[\phi,\alpha](\eta,b)=0
\end{equation}
in the weak sense. Therefore, we have:
\begin{align}
    &-\Delta {b}_j+|\phi|^2{b}_j-2\big(i\eta,D_{\alpha_j}\phi\big)=0\label{m603}\\
    &-D_{\alpha_j}D_{\alpha_j}{\eta}+\frac{1}{2}\big(3|\phi|^2-1\big)|\eta|^2+2i\sum_{j=1}^2(D_{\alpha_j}\phi){b}_j=0\label{m703}
\end{align}
in the weak sense. Now, if we integrate the above equations against the test function $(\eta,b)\in H^{1}$, we deduce that:
\begin{align}
    &\int_{\mathbb{R}^2}\big|\nabla {b}_j\big|^2+|\phi|^2{b}_j^2-2\big(i{\eta},D_{\alpha_j}\phi\big){b}_j=0\label{m604}\\
    &\int_{\mathbb{R}^2}\big|D_{\alpha}\eta\big|^2+\frac{1}{2}\big(3|\phi|^2-1\big)|\eta|^2-2\sum_{j=1}^2\big(i\eta,D_{\alpha_j}\phi\big){b}_j=0\label{m704}
\end{align}
Now, adding equation \eqref{m604} to the equation \eqref{m704}, we deduce that $J[\eta,b]=0$. ( The functional $J$ was defined in \eqref{m1}.) Now, according to \eqref{m2}, we deduce that $(\eta,b)=0$. Therefore, $(\eta_1,b_1)=\eta_2,b_2)$.
\end{proof}

\begin{claim}
\label{c22}
Suppose that $(\eta,b)\in (H^1)^{\perp}$ is the minimizer of the functional $I_{_{\zeta}}$ in $(H^1)^{\perp}$. Then $(\eta,b)$ satisfies the equations
\begin{equation}
\label{t11}
L[\phi,\alpha](\eta,b)=\zeta
\end{equation}
in the weak sense. 
\end{claim}
\begin{proof}
Consider the functionals $T_{j}: H^{1}\to \mathbb{R}$ for $j=1,2$ and $T_{0}: H^{1}\to \mathbb{R}$ defined by:
\begin{align}
    &T_{_{j}}[n_0,n_1,n_2]=\int_{\mathbb{R}^2}\big( \nabla n_{_{j}}.\nabla {b}_j+\Big(|\phi|^2{b}_j+\big(2iD_{\alpha_j}\phi ,\eta\big)-\zeta_{_{j}}\Big)n_{_{j}}\label{m71}\\
    &T_{_{0}}[n_0,n_1,n_2]=\int_{\mathbb{R}^2} \sum_{j=1}^2\big(2iD_{\alpha_j}\phi,n_{0}\big){b}_j+\sum_{j=1}^2\big(D_{\alpha_j}\eta,D_{\alpha_j}n_{0}\big)+\Big(\frac{1}{2}\big(3|\phi|^2-1\big)\eta-\zeta_{0},n_{0}\Big)\label{m72}
\end{align}
Suppose that $u=\big(u_0,u_1,u_2\big)\in H^{1}$. We can write:
\begin{equation}
\label{eqm73}
u=v+w
\end{equation}
where 
\[v=\big(v_0,v_1,v_2\big)\in (H^1)^{\perp}\]
and
\[w=\big(w_0,w_1,w_2\big)\in H^{1}\cap T_{p}M_N\]
\\

Since $(\eta,b)$ is the minimizer of $I_{_{\zeta}}$ in $(H^{1})^{\perp}$, we have:
\begin{equation}
    \label{eq727}
    \frac{d}{dt}\Big |_{_{t=0}}I_{_\zeta}\big((\eta,b)+tv\big)=0
\end{equation}
On the other hand, we have:
\begin{equation}
    \label{eq7271}
    \frac{d}{dt}\Big|_{_{t=0}}I_{_\zeta}\big((\eta,b)+tv\big)=T_{_{A_1}}(v)+T_{_{A_2}}(v)+T_{_{\phi}}(v)
\end{equation}
According to \eqref{eq727} and \eqref{eq7271}, we have:
\begin{equation}
\label{eqp1}
    T_{_{A_1}}(v)+T_{_{A_2}}(v)+T_{_{\phi}}(v)=0
\end{equation}
We know $w\in H^{2}$. Using this and the fact that $(\eta,b)\in H^{1}$, we can do some integration by parts to obtain:
\begin{equation}
\label{eqp2}
\begin{split}
T_{_{1}}(w)+T_{_{2}}(w)+T_{_{0}}(w)&=\int_{\mathbb{R}^2}{L}_{_{1}}(w){b}_1+{L}_{_{2}}(w){b}_2+\big({L}_{_{0}}(w),\eta\big)
\end{split}
\end{equation}
But since $w\in T_{_{p}}M_N$, we have
\begin{equation}
    \label{eqp3}
    {L}_{_{1}}(w)={L}_{_{2}}(w)={L}_{_{0}}(w)=0
\end{equation}
Using \eqref{eqp2} and \eqref{eqp3}, we have:
\begin{equation}
    \label{eqp4}
    T_{_{0}}(w)+T_{_{1}}(w)+T_{_{2}}(w)=0
\end{equation}
Now, according to \eqref{eqm73}, \eqref{eqp1} and \eqref{eqp4}, we have:
\begin{equation}
    \label{eqp5}
    T_{_{0}}(u)+T_{_{1}}(u)+T_{_{2}}(u)=0
\end{equation}
Since $u\in H^{1}$ is arbitrary, \eqref{eqp4} and \eqref{eqp5} imply that:
\begin{equation}
    \label{new.eqp5}
    T_{_{0}}(u)=T_{_{1}}(u)=T_{_{2}}(u)=0
\end{equation}
Therefore, equation \eqref{t11} holds in the $H^{1}$-weak sense. 
\end{proof}

\begin{claim}
\label{c4}
Suppose that $(\tilde{\varphi},\tilde{a})\in (H^{1})^{\perp}$ satisfies the equations
\begin{equation}
\label{new.eq511}
L[\phi,\alpha](\tilde{\varphi},\tilde{a})=\zeta
\end{equation}
in the weak sense. Then, $(\tilde{\varphi},\tilde{a})\in{H}^{2}$, and:
\begin{equation}
    \label{eq511.new}
    ||(\tilde{\varphi},\tilde{a})||_{_{H^2}}\le C||\zeta||_{L^2}
\end{equation}
where $C$ is a constant.
\end{claim}
\begin{proof}
First, note that according to the claims \eqref{c1}, \eqref{c11} and \eqref{c22}, we can say that $(\tilde{\varphi},\tilde{a})$ is the unique minimizer of $I_{\zeta}$ in $(H^{1})^{\perp}$. Therefore, by equation \eqref{m0} in claim \eqref{c1}, we have:
\begin{equation}
    \label{eq513}
    ||\tilde{\varphi}||_{L^2}\le C||\zeta||_{L^2}
\end{equation}
for some constant $C$. Set
\begin{equation}
    \label{eqnnn1}
    h_j=2\big(i\tilde{\varphi}, D_{\alpha_j}\phi\big)+\zeta_{_{j}}
\end{equation}
Now, note that $D_{\alpha_j}\phi\in L^2$, since it has exponential decay at infinity. Therefore, according to \eqref{eq513}, we can say \begin{equation}
    \label{eq515}
||h||_{L^2}\le C ||\zeta||_{L^2}   
\end{equation} 
for some constant $C$. Now, note that according to claim \eqref{c22}, we have:
\begin{equation}
    \label{eqnnn2}
    -\Delta \tilde{a}_j+|\phi|^2\tilde{a}_j=h\hspace{.3cm}\textbf{in the weak sense}
\end{equation}
On the other hand, according to lemma \ref{al1}, we can say that there exists unique $u_j\in H^2$ such that: 
\begin{equation}
    \label{eq512}
    -\Delta u_j+|\phi|^2u_j=h_j \hspace{.3cm}\textbf{in the weak sense}
\end{equation}
Now, by \eqref{eqnnn2} and \eqref{eq512}, we can say:
\begin{equation}
    \label{eq514}
    -\Delta\big(u_j-\tilde{a}_j\big)+|\phi|^2\big(u_j-a_j\big)=0 \hspace{.3cm}\textbf{in the weak sense}
\end{equation}
in the weak sense. This implies that:
\begin{equation}
    \label{eq516}
    \int_{\mathbb{R}^2}\big|\nabla(u_j-\tilde{a}_j)\big|^2+|\phi|^2\big(u_j-\tilde{a}_j\big)^2=0
\end{equation}
Therefore, $\tilde{a}_j=u_j$. Therefore, $\tilde{a_j}\in H^2$, and according to lemma \ref{al1} and \eqref{eq515}, we have:
\begin{equation}
    \label{eq517}
    ||\tilde{a}_j||_{H^2}\le C||\zeta||_{L^2}
\end{equation}
for a constant $C$.
\\
\\
Now, note that according to equation \eqref{new.eq511}, and using the standard elliptic regularity, we can say that:
\begin{equation}
    \label{eq5171}
    \tilde{\varphi}\in H^{2}_{loc}(\mathbb{R}^2)
\end{equation}
According to equation \eqref{new.eq511} and claim \eqref{c22}, we have:
\begin{equation}
    \label{eq518}
   D_{\alpha_j}D_{\alpha_j}\tilde{\varphi}=\frac{1}{2}\big(3|\phi|^2-1\big)\tilde{\varphi}+2i(D_{\alpha_j}{\phi})\tilde{a}_j-\zeta_{_0}
\end{equation}
in the weak sense. Therefore,
\begin{equation}
    \label{eq519}
   \Big( \sum_{j=1}^2D_{\alpha_j}D_{\alpha_j}\tilde{\varphi}\Big)\in L^2(\mathbb{R}^2)
\end{equation}
We already know that $\tilde{\varphi}\in H^1$ and 
\begin{equation}
\label{new.eq519.1}
||\tilde{\varphi}||\le C ||\zeta||_{L^2}
\end{equation}
Therefore, \eqref{eq519} implies that $\varphi\in H^2$ and provides a constant $C$ such that
\begin{equation}
\label{new.eq519.2}
||\varphi||_{H^2}\le C||\zeta||_{L^2}
\end{equation}
\end{proof}
Now, according to claims \eqref{c1}, \eqref{c11}, \eqref{c22} and \eqref{c4}, we deduce that there exists unique $\psi=(\tilde{\varphi},\tilde{a})\in (H^{2})^{\perp}$ which satisfies \eqref{eq1.new}. Furthermore, \eqref{eq2.new} holds for a constant $C$.
\\
\\
Now, suppose that $\zeta\in \big(T_{_{(p)}}M_N\big)^{\perp}\cap H^2_{_{loc}}$, and the inequalities in \eqref{eqexd.1} hold for the two constants $A,\gamma$. Note that using standard elliptic regularity and the fact that $(\tilde{\varphi},\tilde{a})\in H^2_{loc}$, we deduce that $(\tilde{\varphi},\tilde{a})\in H^4_{_{loc}}$. 
According to equations \eqref{eq1.new}, and the fact that $\nabla.\alpha=0$, we have:
\begin{align}
     -\Delta\tilde{\varphi}+2i\alpha_j\partial_j\tilde{\varphi}+\Big(\frac{1}{2}\big(3|\phi|^2-1\big)+|\alpha|^2\Big)\tilde{\varphi}+2i\sum_{j=1}^2\big(D_{\alpha_j}\phi\big)\tilde{a}_j=\zeta_{0}\label{eq4}\\
     -\Delta\tilde{a_j}-2(i\tilde{\varphi},D_{\alpha_j}\phi)+\tilde{a}_j|\phi|^2=\zeta_{_{j}}\label{eq5}
\end{align}
\begin{claim}
\label{prop1}
\begin{align}
    ||\tilde{\varphi}||_{L^{\infty}}\le CA\label{eq6.new}\\
    ||\tilde{a}_j||_{L^{\infty}}\le CA\label{eq7}
\end{align}
for some constant $C$.
\end{claim}
\begin{proof}
By the Sobolev embedding theorem, we know that
\begin{equation}
     \label{eq50}
     ||\big(\tilde{\varphi},\tilde{a}\big)||_{_{L^{\infty}}}\le {C} ||\big(\tilde{\varphi},\tilde{a}\big)||_{_{H^2}}
 \end{equation}
 where $C$ is a constant. Using \eqref{eq2.new}, \eqref{eqexd.1}, and \eqref{eq50} we deduce \eqref{eq6.new} and \eqref{eq7}. 
\end{proof}
Using the fact that $\psi=(\tilde{\varphi},\tilde{a})\in H^2$ and the Sobolev embedding theorem, we deduce:
\begin{claim}
\label{c7}
\begin{align}
\lim_{r\to\infty}\max_{|x|=r}\big|\tilde{a}_j(x)\big|=0\label{eq501}\\
\lim_{r\to\infty}\max_{|x|=r}\big|\tilde{\varphi}(x)\big|=0\label{eq502}
\end{align}
\end{claim}
Now, if we rewrite \ref{eq4},\ref{eq5} as below:
\begin{align}
 -\Delta\tilde{\phi}+2i\alpha_j\partial_j\tilde{\phi}+\Big(\frac{1}{2}\big(3|\phi|^2-1\big)+|\alpha|^2\Big)\tilde{\phi}=\eta_{\phi}-2i\big(D_{\alpha_j}\phi\big)\tilde{a}_j\label{eq13.new}\\
     -\Delta\tilde{a_j}+\tilde{a}_j|\phi|^2=\eta_{_{A_j}}+2(i\tilde{\phi},D_{\alpha_j}\phi)\label{eq14}   
\end{align}
then, using the estimates in the above propositions, and the exponential decay for $\zeta$, we can say that there exists $N$ such that
\begin{align}
   \Bigg{|} -\Delta\tilde{\varphi}+2i\alpha_j\partial_j\tilde{\varphi}+\Big(\frac{1}{2}\big(3|\phi|^2-1\big)+|\alpha|^2\Big)\tilde{\varphi}\Bigg{|}\le N e^{-\beta |X|}\label{eq15.new}\\
    \Bigg{|} -\Delta\tilde{a_j}+\tilde{a}_j|\phi|^2\Bigg{|}\le Ne^{-\beta |X|} \label{eq16.new}   
\end{align}
Now, note that \ref{eq16.new} is in the form of lemma \ref{al1}. Based on lemma ???, we get the expected exponential decay estimate for $\tilde{a}_{j}$. 
\\
\\
Now, we want to prove the exponential decay of $\tilde{\varphi}$. Note that
\begin{equation}
    \label{eq21.new}
\Delta|\tilde{\varphi}|^2=2(\Delta\tilde{\varphi},\tilde{\varphi})+2|\nabla{\tilde{\varphi}}|^2
\end{equation}
According to equation \ref{eq15.new}, by taking the inner product with $\tilde{\varphi}$, we have:
\begin{equation}
\label{eq22.new}
    \Big(-\Delta\tilde{\varphi}+2i\alpha_j\partial_j\tilde{\varphi}+\big(\frac{1}{2}(3|\phi|^2-1)+|\alpha|^2\big)\tilde{\varphi}, \tilde{\varphi}\Big{)}\le N e^{-\beta|X|}|\tilde{\varphi}|
\end{equation}
This implies that:
\begin{equation}
    \label{eq23.new}
    (\Delta\tilde{\phi},\tilde{\phi}) \ge \big(\frac{1}{2}(3|\phi|^2-1)+|\alpha|^2\big)|\tilde{\varphi}|^2-\Bigg|\Big(2i\alpha_j\partial_j\tilde{\varphi}, \tilde{\varphi}\Big)\Bigg|-N e^{-\beta |X|}|\tilde{\varphi}|
\end{equation}
Now, note that 
\begin{equation}
    \label{eq24.new}
    \begin{split}
    \Bigg|\Big(2i\alpha_j\partial_j\tilde{\varphi}, \tilde{\varphi}\Big)\Bigg|&=\Bigg|\Big(2i\partial_j\tilde{\varphi}, \alpha_j\tilde{\varphi}\Big)\Bigg|\\
    &\le |\partial_j\tilde{\varphi}|^2+|\alpha_j|^2|\tilde{\varphi}|^2\\
    &=|\nabla\tilde{\varphi}|^2+|\alpha|^2|\tilde{\varphi}|^2
    \end{split}
\end{equation}
Now, suppose that $R_2$ is chosen such that and if $|x|>R_2$, then $|{\phi}(x)|>\frac{1}{3}$. Then combining \ref{eq21.new}, \ref{eq23.new}, \ref{eq24.new}, we can say that if $|x|>R_2$, we have:
\begin{equation}
   \label{eq25.new}
   \begin {split}
   \Delta|\tilde{\varphi}|^2&\ge \big(\frac{1}{2}(3|\phi|^2-1)\big)|\tilde{\varphi}|^2-N e^{-\beta |x|}|\tilde{\varphi}|\\
   &=|\tilde{\varphi}|^2+ \frac{3}{2}\big(|\phi|^2-1\big)|\tilde{\varphi}|^2-N e^{-\beta |x|}|\tilde{\varphi}|
   \end{split}
   \end{equation}
But we know that $\big(|\phi|^2-1\big)$ decays to $0$ faster then the exponential $e^{-r|x|}$ for any $0<r<1$. Using this and claim \ref{prop1} in \eqref{eq25.new}, we can say that if $|x|>R_2$, then:
\begin{equation}
    \label{eq26}
    \Delta|\tilde{\varphi}|^2\ge |\tilde{\varphi}|^2- N^{\prime} e^{-\beta |x|}
\end{equation}
for some $N^{\prime}>0$. Consider the function
\begin{equation}
\label{eqdefv}
v=|\tilde{\varphi}|^2-Ke^{-\beta|X|}
\end{equation}
where the constant $K$ is such that
\begin{enumerate}
    \item 
    \[
    v\big|_{|x|=R_2}\le 0
    \]
    \item
    \[
    K\ge \frac{N^{\prime}}{1-\beta^2}
    \]
\end{enumerate}
 Using \ref{eq26}, if $|x|>R_2$, we have:
\begin{equation}
    \label{eq27.new}
    \begin{split}
       \Delta v &\ge |\tilde{\varphi}|^2- N^{\prime} e^{-\beta |x|}-K\Delta(e^{{-\beta}|x|})\\ 
       &=|\tilde{\varphi}|^2- N^{\prime} e^{-\beta |x|}-K\Big(-\frac{\beta}{|x|}+\beta^2\Big)e^{{-\beta}|x|}\\
       &\ge |\tilde{\varphi}|^2 -N^{\prime} e^{-\beta |x|}-K\beta^2e^{{-\beta}|x|}\\
       & \ge |\tilde{\varphi}|^2 - Ke^{{-\beta}|x|}\\
       &=v
    \end{split}
\end{equation}
 
Using \eqref{eq27.new} and the fact that $|v|\to 0$ as $|x|\to \infty$, and the maximum principle, we deduce that $v\le 0$ on $|x|\ge R_2$. Therefore, if $|x|>R_2$, then
\[
|\tilde{\varphi}|^2\le K e^{-\beta|x|}
\]
This and equation \eqref{eq2.new} proves the exponential decay estimates \eqref{al2.eq1}. We will use the following standard lemma in elliptic PDEs:
\begin{lemma}
\label{n1l1}
For any $n\in \mathbb{N}$, there exists $C_n>0$ such that for any $f\in C^2(\mathbb{R}^n)$,
\begin{equation}
    \label{eq1n1l1}
    ||\nabla f||_{_{L^{\infty}(B_1)}}<C_n\big(||f||_{_{L^{\infty}(B_2)}}+||\Delta f||_{_{L^{\infty}(B_2)}}\big)
\end{equation}
\end{lemma}
According to equation \ref{eq14}, there exists $N$ and $R$ such that if $|x|>R$, then 
\begin{equation}
    \label{n1eq1}
    |\Delta\tilde{a}_j(x)|<Ne^{-\gamma|x|}\hspace{.3cm}j=1,2
\end{equation}
Therefore, according to lemma \ref{n1l1}, there exists $R$ and $N$ such that if $|x|>R$, then 
\begin{equation}
    \label{n1eq2}
    |\partial_k\tilde{a}_j(x)|<Ne^{-\gamma |x|}\hspace{.3cm}j=1,2\hspace{.3cm},\hspace{.3cm}k=1,2
\end{equation}
According to equation \ref{eq13.new}, the fact that $|\alpha_j|(x)\to 0$ as $|x|\to\infty$, and lemma \ref{n1l1}, we deduce that there exists $R$ and $N$ such that if $|x|>R$, then 
\begin{equation}
    \label{n1eq2}
    |\partial_k\tilde{{\varphi}}(x)|<N e^{-\gamma|x|}\hspace{.4cm}k=1,2
\end{equation}
This proves \eqref{al2.eq1.1}.
\\
Now, suppose that $\zeta\in\big(T_{_{p}}M_N\big)^{\perp}\cap H^2_{_{loc}}$ satisfies \eqref{geq1}. We take the $j$th partial derivative of equation \eqref{eq5}:
\\
    \begin{equation}
    \label{3eq61}
\begin{split}   
-\Delta\partial_j\tilde{a}_j-2(i\partial_j\tilde{\varphi}, D_{\alpha_j}\phi)-2(i\tilde{\varphi}, \partial_j D_{\alpha_j}\phi)+(\partial_j\tilde{a}_j)|\phi|^2+\tilde{a}_j\partial_j|\phi|^2=\partial_j(\zeta)_j
\end{split}
    \end{equation}
    But we have:
    \begin{equation}
    \label{3eq62}
    \begin{split}
        &\partial_jD_{\alpha_j}\phi=\partial_{j}^2\phi-i(\partial_ja_j)\phi-ia_j\partial_j\phi\\
        &\partial_j|\phi|^2=2(\phi,\partial_{j}\phi)
    \end{split}
    \end{equation}
    If we use equations in \eqref{3eq62} in \eqref{3eq61}, and add them up for all $j$, we get:
    \begin{equation}
        \label{3eq63}
        \begin{split}
            \nabla.(\zeta_1,\zeta_2)=&-\Delta\nabla.\tilde{a}-2(i\partial_j\tilde{\varphi}, \partial_j\phi)+2\sum_{j=1}^2a_j(\partial_j\tilde{\varphi},\phi)-2\big(i\tilde{\varphi},\Delta\phi\big)\\
            &+2\big(\tilde{\varphi},(\nabla.a)\phi+\sum_{j=1}^2a_j\partial_j\phi\big)+(\nabla.\tilde{a})|{\phi}|^2+2\sum_{j=1}^2\tilde{a}_j(\phi,\partial_j\phi)
        \end{split}
    \end{equation}
    Now, using the fact that
    \begin{equation}
        {L}_{\phi}[\Phi,A](\tilde{\phi},\tilde{a})=\zeta_{0}
    \end{equation}
   there holds:
    \begin{equation}
        \label{3eq64}
       \Big(i\phi, -\sum_{j=1}^2D_{\alpha_j}D_{\alpha_j}\tilde{\varphi}+\frac{1}{2}\big(3|\phi|^2-1\big)\tilde{\phi}+2i\sum_{j=1}^2\tilde{a}_jD_{\alpha_j}\phi\Big)=(i\phi,\zeta_{0})
    \end{equation}
    But we have:
    \begin{equation}
        \label{3eq65}
        D_{\alpha_j}D_{\alpha_j}\tilde{\varphi}=\partial_j^2\tilde{\varphi}-a_j^2\tilde{\varphi}-i(\partial_ja_j)\tilde{\varphi}-2ia_j\partial_j\tilde{\varphi}
    \end{equation}
    If we use equation \eqref{3eq65} in \eqref{3eq64}, we get:
    \begin{equation}
        \label{3eq66}
        \begin{split}
        (i\phi,\zeta_{0})=&(-i\phi, \Delta\tilde{\varphi})+{|a|}^2(i\phi,\tilde{\varphi})+\nabla.a (\phi, \tilde{\varphi})+2\sum_{j=1}^2a_j(\phi,\partial_j\tilde{\varphi})\\
        &+\frac{1}{2}\big(3|\phi|^2-1\big)(i\phi,\tilde{\varphi})+2\sum_{j=1}^2\tilde{a}_j(\phi,\partial_j\phi)
        \end{split}
    \end{equation}
    Now if we subtract \eqref{3eq66} from \eqref{3eq63}, we have:
    \begin{equation}
        \label{3eq67}
        \begin{split}
        &-\Delta\nabla.\tilde{a}+2\sum_{j=1}^2(i\partial_j{\phi}, \partial_j\tilde{\varphi})+2(i\Delta\phi,\tilde{\phi})+2\sum_{j=1}^2\big(\tilde{\varphi},(\nabla.a)\phi+a_j\partial_j\phi\big)+(\nabla.\tilde{a})|{\phi}|^2\\
        &+(i\phi, \Delta\tilde{\phi})-{|a|}^2(i\phi,\tilde{\varphi})-\nabla.a (\phi, \tilde{\varphi})-\frac{1}{2}\big(3|\phi|^2-1\big)(i\phi,\tilde{\varphi})=0
        \end{split}
    \end{equation}
    But we have:
\begin{equation}
    \label{3eq69}
    \Delta(i\phi,\tilde{\varphi})=(i\Delta\phi,\tilde{\varphi})+2\sum_{j=1}^2(i\partial_j\phi,\partial_j\tilde{\varphi})+(i\phi,\Delta\tilde{\varphi})
\end{equation}
Using \eqref{3eq69} in \eqref{3eq67}, we have:
\begin{equation}
    \label{3eq74}
    -\Delta\nabla.\tilde{a}+\Delta(i\phi,\tilde{\varphi})+(\nabla.\tilde{a})|{\phi}|^2+\Big(i\Delta\phi+(\nabla.a)\phi+2\sum_{j=1}^2a_j\partial_j\phi-i|a|^2\phi-\frac{i}{2}\big(3|\phi|^2-1\big)\phi,\tilde{\varphi}\Big)=0
\end{equation}
But we know $(\phi,a)$ is a steady state solution for the Abelian Higgs model. Therefore, it satisfies the equation:
\begin{equation}
    -\sum_{j=1}^2D_jD_j\phi+\frac{1}{2}\big(|\phi|^2-1\big)\phi=0
\end{equation}
Which can be simplified as:
\begin{equation}
\label{3eq72}
    -\Delta\phi+|a|^2\phi+i(\nabla.a)\phi+2i\sum_{j=1}^2a_j\partial_j\phi+\frac{1}{2}\big(|\phi|^2-1\big)\phi=0
\end{equation}
Now, using equation \eqref{3eq72} in  \eqref{3eq74}, we get:
\begin{equation}
    \label{3eq75}
    -\Delta\nabla.\tilde{a}+\Delta(i\phi,\tilde{\varphi})+|\phi|^2\big(\nabla.\tilde{a}-(i\phi,\tilde{\varphi})\big)=0
\end{equation}
Or:
\begin{equation}
    \label{3eq76}
    \big(-\Delta+|\phi|^2\big)\big(\nabla.\tilde{a}-(i\phi,\tilde{\varphi})\big)=0
    \end{equation}
    Since $\tilde{a}\in H^2$, $(\nabla.\tilde{a})\in H^1$. Also, since $|\phi|$ is bounded and $\tilde{\varphi}\in L^2$, $(i\phi,\tilde{\varphi})\in L^2$. We have:
    \begin{equation}
    \label{eqnn1}
    \partial_{_j}(i\phi,\tilde{\varphi})=\big(iD_{_{\alpha_j}}\phi,\tilde{\varphi}\big)+\big(i\phi,D_{_{\alpha_j}}\tilde{\varphi}\big)
    \end{equation}
    Since $\tilde{\varphi}\in H^{1}$ and $D_{\alpha_j}\phi$ has exponential decay and $|\phi|$ is bounded, then by \eqref{eqnn1}, $\big(i\phi,\tilde{\varphi}\big)\in H^1$. Therefore, $\big(\nabla.\tilde{a}-(i\phi,\tilde{\varphi})\big)\in H^1$. Now, according to \eqref{3eq76} and lemma \ref{al1},
    \[\nabla.\tilde{a}-(i\phi,\tilde{\varphi})=0\]
\end{proof}

\begin{proposition}
\label{p2}
Suppose that $U\subset \mathbb{R}^k$ for some $k>0$ is an open set and $s:U\to M_N$ is differentiable. Suppose that 
\begin{equation}
    \label{eqln211}
    f=\big(f_{_{0}},f_{_{1}},f_{_{2}}\big):U\to \Big(L^2\big(\mathbb{R}^2,\mathbb{C}\big),L^2\big(\mathbb{R}^2,\mathbb{R}\big),L^2\big(\mathbb{R}^2,\mathbb{R}\big)\Big)
\end{equation}
is differentiable. 
and
\begin{equation}
    \label{eqln212}
    f(p)\perp T_{_{s(p)}}M_N
\end{equation}
for each $p\in U$. Suppose that
\[
v:U\to \Big(H^2\big(\mathbb{R}^2,\mathbb{C}\big),H^2\big(\mathbb{R}^2,\mathbb{R}\big),H^2\big(\mathbb{R}^2,\mathbb{R}\big)\Big)
\] 
has the property
\begin{equation}
        \label{eqln213.1}
        v(p)\in \big(T_{s(p)}M_N\big)^{\perp}
    \end{equation}
    for any $p\in U$ and
\begin{equation}
    \label{eqln213}
    \begin{split}
        &{L}_{\varphi}[\phi(s(p)),\alpha(s(p))](v(p))=f_{_{0}}(p)\\
        &{L}_{A_j}[\phi(s(p)),\alpha(s(p))](v(p))=f_{_{j}}(p)
    \end{split}
\end{equation}
for $j=1,2$ and any $p\in U$. Then $v$ is differentiable and for each $p\in U$ and $\tau\in T_p U$, we have:
\begin{equation}
    \label{eqln213.1}
    Dv(p)(\tau)=m_p(\tau)+E_p(\tau)
\end{equation}
where $m_p(\tau)$ is an element of $T_{_{s(p)}}(M_N)$ which satisfies
\begin{equation}
    \label{eqln213.2}
    \Big(m_p(\tau),n_{\mu}\big(s(p)\big)\Big)=-\Big(v(p),Dn_{\mu}\big(s(p)\big)\big(Ds(p)(\tau)\big)\Big)
\end{equation}
for each $\mu\in\{1,2,3,4\}$ and $E_p(\tau)$ is an element of 
\[
H^2\big(\mathbb{R}^2,\mathbb{C}\times \mathbb{R}^2\big)\cap\big(T_{_{s(p)}}M_N\big)^{\perp}
\] 
which satisfies
\begin{equation}
    \label{eq0l40}
    \begin{split}
    {L}_{\varphi}[\phi(s(p)),\alpha(s(p))](Dv_p(\tau))=&Df_{0}(p)(\tau)\\
    -&2i\sum_{j=1}^2\Big(D\alpha_j(s(p))\big(Ds(p)(\tau)\big)\Big)\Big(\big(D_{\alpha_j}v_{0}\big)(p)(\tau)\Big)\\
    -&2\sum_{j=1}^2\alpha_j\big(s(p)\big)\Big(D\alpha_j(s(p))\big(Ds(p)(\tau)\big)\Big)v_{0}(p)\\
    -&3\Big(\phi(p),D\phi\big(s(p)\big)\big(Ds(p)(\tau)\big)\Big)v_{0}(p)\\
    -&2i\sum_{j=1}^2v_{j}(p)D(D_{\alpha_j}\phi)\big(s(p)\big)\big(Ds(p)(\tau)\big)
    \end{split}
\end{equation}
and:
\begin{equation}
    \label{eq0.1l40}
    \begin{split}
    {L}_{A_j}[\phi(s(p)),\alpha(s(p))](Dv_p(\tau))=&Df_{j}(p)(\tau)\\
    +&2\Big(iv_{0}(p),D(D_{\alpha_j}\phi)\big(s(p)\big)\big(Ds(p)(\tau))\Big)\\
    -&v_{j}(p)D|\phi|^2\big(s(p)\big)\big(Ds(p)(\tau)\big)
    \end{split}
\end{equation}
for $j=1,2$.
\end{proposition}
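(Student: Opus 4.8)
The plan is to realise $v(p)$ as $\mathcal{N}_p^{-1}(f(p),0)$ for an explicit family of Banach‑space isomorphisms $\mathcal{N}_p$ depending differentiably on $p$, and then to read off $Dv(p)(\tau)$ by differentiating the defining relations \eqref{eqln213} and the orthogonality constraint. Write $\mathcal{L}_p:=\big(L_\varphi,L_{A_1},L_{A_2}\big)[\phi(s(p)),\alpha(s(p))]$, viewed as a bounded operator $H^2(\mathbb{R}^2,\mathbb{C}\times\mathbb{R}^2)\to L^2(\mathbb{R}^2,\mathbb{C}\times\mathbb{R}^2)$. By the properties of $M_N$ and of the linearized operators established earlier in the paper, $\mathcal{L}_p$ is (formally) self-adjoint and Fredholm of index $0$ with $\ker\mathcal{L}_p=T_{s(p)}M_N$, and $\{n_\mu(s(p))\}_{\mu=1}^{4}$ is a basis of $T_{s(p)}M_N$; self-adjointness then gives $\mathrm{im}\,\mathcal{L}_p=(T_{s(p)}M_N)^\perp$. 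Since $f(p)\perp T_{s(p)}M_N$ and $v(p)\perp T_{s(p)}M_N$, the relations \eqref{eqln213} say exactly that $v(p)$ is the unique element of $H^2(\mathbb{R}^2,\mathbb{C}\times\mathbb{R}^2)\cap(T_{s(p)}M_N)^\perp$ with $\mathcal{L}_p v(p)=f(p)$.

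Next I would introduce
\[
\mathcal{N}_p\colon H^2(\mathbb{R}^2,\mathbb{C}\times\mathbb{R}^2)\oplus\mathbb{R}^4\longrightarrow L^2(\mathbb{R}^2,\mathbb{C}\times\mathbb{R}^2)\oplus\mathbb{R}^4,\qquad \mathcal{N}_p(w,c)=\Big(\mathcal{L}_p w+\sum_{\mu=1}^{4}c_\mu\,n_\mu(s(p)),\ \big((w,n_\mu(s(p)))\big)_{\mu=1}^{4}\Big).
\]
It is Fredholm of index $0$, since it differs by a finite‑rank operator from $(w,c)\mapsto(\mathcal{L}_p w,0)$, and it is injective: if $\mathcal{N}_p(w,c)=0$ then $w\perp T_{s(p)}M_N$ while $\mathcal{L}_p w=-\sum_\mu c_\mu n_\mu(s(p))\in T_{s(p)}M_N=(\mathrm{im}\,\mathcal{L}_p)^\perp$, so $\mathcal{L}_p w=0$, hence $w\in T_{s(p)}M_N\cap(T_{s(p)}M_N)^\perp=\{0\}$ and then $c=0$. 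Thus $\mathcal{N}_p$ is an isomorphism, and by the previous paragraph $(v(p),0)=\mathcal{N}_p^{-1}(f(p),0)$. The crucial point is that $p\mapsto\mathcal{N}_p$ is differentiable into bounded operators: $s$ is differentiable, and $\phi$, $\alpha$ and the derived quantities $D_{\alpha_j}\phi$, $|\phi|^2$ and the frame vectors $n_\mu$ depend smoothly on the point of $M_N$ in the topologies used earlier, so each coefficient of $\mathcal{L}_p$ and each $n_\mu(s(p))$ depends differentiably on $p$; since operator inversion is smooth on the open set of invertible operators, $p\mapsto\mathcal{N}_p^{-1}$ is differentiable, and composing with the differentiable map $p\mapsto(f(p),0)$ shows that $p\mapsto(v(p),0)$, hence $v$, is differentiable into $H^2$.

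For the formula for $Dv(p)(\tau)$ one differentiates, in the direction $\tau\in T_pU$, the relations defining $v$. Differentiating $(v(p),n_\mu(s(p)))=0$ gives $(Dv(p)(\tau),n_\mu(s(p)))=-(v(p),Dn_\mu(s(p))(Ds(p)(\tau)))$ for $\mu=1,\dots,4$; letting $m_p(\tau)$ be the $L^2$‑orthogonal projection of $Dv(p)(\tau)$ onto $T_{s(p)}M_N$ and $E_p(\tau):=Dv(p)(\tau)-m_p(\tau)$, this is exactly \eqref{eqln213.2} (and these four equations determine $m_p(\tau)$, since $\{n_\mu(s(p))\}$ is a basis). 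Moreover $E_p(\tau)\in H^2\cap(T_{s(p)}M_N)^\perp$ because $Dv(p)(\tau)\in H^2$ by the previous step and $m_p(\tau)\in T_{s(p)}M_N$ consists of smooth functions, being in $\ker\mathcal{L}_p$. Finally, differentiating $L_\varphi[\phi(s(p)),\alpha(s(p))](v(p))=f_0(p)$ and $L_{A_j}[\phi(s(p)),\alpha(s(p))](v(p))=f_j(p)$ by the product and chain rules and moving $L_\varphi[\phi(s(p)),\alpha(s(p))](Dv(p)(\tau))$, resp. $L_{A_j}[\cdots](Dv(p)(\tau))$, to the left, the remaining terms are the derivatives of the coefficients of $L_\varphi$, $L_{A_j}$ produced by the $p$‑dependence through $\phi(s(p))$ and $\alpha(s(p))$; these are chain‑rule expressions in $D\phi(s(p))(Ds(p)(\tau))$, $D\alpha_j(s(p))(Ds(p)(\tau))$ and $D(D_{\alpha_j}\phi)(s(p))(Ds(p)(\tau))$, and substituting the explicit forms of $L_\varphi$ and $L_{A_j}$ recalled earlier yields precisely \eqref{eq0l40} and \eqref{eq0.1l40}.

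I expect the main obstacle to be exactly the differentiability of $p\mapsto\mathcal{N}_p$: one must check that all the multiplication coefficients entering the second‑order operators $L_\varphi$, $L_{A_j}$, as well as the vectors $n_\mu(s(p))$, vary differentiably with $p$ in topologies strong enough for the ensuing products to be continuous from $H^2$ to $L^2$ — this rests on the smooth dependence of $\phi,\alpha,n_\mu$ on $M_N$ set up in the earlier sections. Everything downstream (the isomorphism property of $\mathcal{N}_p$, the smoothness of inversion, and the two closing differentiations) is either soft functional analysis or a routine application of the product and chain rules.
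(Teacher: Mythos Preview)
Your argument is correct, but it takes a genuinely different route from the paper's. The paper proceeds by a direct $\epsilon$--$\delta$ verification: it first shows $v$ is locally Lipschitz by splitting $v(p_1)-v(p_2)$ into its $T_{s(p_1)}M_N$ and $(T_{s(p_1)}M_N)^\perp$ components and estimating each separately (the normal part via the explicit difference of the equations and the invertibility lemma for $\mathcal{L}$ on the orthogonal complement, the tangential part via the inner products with $n_\mu$); it then \emph{constructs} the candidate derivative $S_p(\tau)=m_p(\tau)+E_p(\tau)$ by hand, where the existence of $E_p(\tau)$ requires a separate verification that the right-hand side of \eqref{eq0l40}--\eqref{eq0.1l40} is orthogonal to $T_{s(p)}M_N$; finally it shows $v(p+\tau)-v(p)-S_p(\tau)=o(|\tau|)$ in $H^2$ by again splitting into tangential and normal parts and estimating each. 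Your approach instead packages everything into the bordered isomorphism $\mathcal{N}_p$ and invokes the smoothness of operator inversion, so that differentiability of $v$ follows in one stroke from operator-norm differentiability of $p\mapsto\mathcal{N}_p$; the formulas then drop out by differentiating the constraints.

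Both approaches ultimately rest on the same analytic input --- the $L^\infty$-differentiability of the coefficients $\alpha_j\circ s$, $|\phi|^2\circ s$, $D_{\alpha_j}\phi\circ s$ (what the paper calls the estimates of proposition~\ref*{p30.1}) and the uniform $H^2$-solvability of $\mathcal{L}_p$ on $(T_{s(p)}M_N)^\perp$ (the paper's lemma~\ref*{al2}). Your route is cleaner and more conceptual, and it immediately suggests how to get higher differentiability without the inductive bookkeeping the paper carries out later; the paper's route is more elementary in that it never needs to speak of operator-valued differentiability, and as a byproduct it makes the orthogonality of the right-hand side of \eqref{eq0l40}--\eqref{eq0.1l40} to $T_{s(p)}M_N$ explicit (a fact your argument gets for free but does not isolate). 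The one place you should be a little more careful is in asserting that $p\mapsto\mathcal{L}_p$ is differentiable into $\mathcal{B}(H^2,L^2)$: this is exactly the content of the $L^\infty$ estimates you allude to, and it is worth naming them rather than leaving it as ``smooth dependence set up earlier''.
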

\begin{proof}
\begin{claim}
\label{c1l1000}
For every $p\in U$, there exists $C,\delta>0$ such that for every $q\in U$ with $|p-q|\le \delta$ , we have
\begin{equation}
    \label{eq1c1l1000}
    ||v(.;q)-v(.;p)||_{_{H^2(\mathbb{R}^2)}} < C|p-q|
\end{equation}
\end{claim}
\begin{proof}
Suppose that $p_1,p_2\in U$.
Let 
\[
w=v(p_1)-v(p_2)
\]
Suppose that 
\[
w=w_1+w_2
\]
where $w_1\in T_{_{s(p_1)}}M_N$ and $w_2\in \big(T_{_{s(p_1)}}M_N\big)^{\perp}$.
\begin{claim}
\label{cl1001}
There exists $C_1,\delta_1>0$ such that if $|p_1-p_2|<\delta_1$, then
$||w_1||_{_{H^2}}<C_1|p_1-p_2|$.
\end{claim}
\begin{proof}
Consider $C_1,\delta_1>0$ such that if $|p_1-p_2|<\delta_1$, then for every $\mu\in\{1,2,3,4\}$, we have:
\begin{equation}
    \label{eq1cl1001}
    ||n_{_{\mu}}(p_1)-n_{_{\mu}}(p_2)||_{_{L^2}}< C_1|p_1-p_2|
\end{equation}
We have:
\begin{equation}
    \label{eq2cl1001}
    \langle v(p_1)-v(p_2),n_{_{\mu}}(p_1)\rangle=-\langle v(p_2),n_{_{\mu}}(p_1)-n_{_{\mu}}(p_2)\rangle
\end{equation}
Therefore, there exists $C_2>0$ such that if $|p_1-p_2|<\delta$, then
\begin{equation}
    \label{eq3cl1001}
    \langle v(p_1)-v(p_2),n_{_{\mu}}(p_1)\rangle < C_2|p_1-p_2|
\end{equation}
Therefore, there exists $C_3>0$ such that if $|p_1-p_2|<\delta$, then
\begin{equation}
    \label{eq4cl1001}
    ||w_1||_{_{H^2}}<C_3|p_1-p_2|
\end{equation}
\end{proof}
There holds
\begin{equation}
    \label{eq2c1l1000}
    \begin{split}
    {L}_{\phi}[\phi({s}(p_1)),\alpha({s}(p_1))](w_2)=&f_{0}(p_1)-f_{0}(p_2)\\
    -&2i\sum_{j=1}^2\big(\alpha_j(s(p_1))-\alpha_j(s(p_2))\big)\partial_j v_{0}(p_2)\\
    -&\big(|\alpha|^2(s(p_1))-|\alpha|^2(s(p_2))\big)v_{0}(p_2)\\
    -&\frac{3}{2}\big(|\varphi|^2(s(p_1))-|\varphi|^2(s(p_2))\big)v_{\varphi}(p_2)\\
    -&2i\sum_{j=1}^2\Big(\big(D_{\alpha_j}\phi\big)(s(p_1))-\big(D_{\alpha_j}\phi\big)(s(p_2))\Big)v_{j}(p_2)
    \end{split}
\end{equation}
and:
\begin{equation}
    \label{eq3c1l1000}
    \begin{split}
    {L}_{A_j}[\phi(s(p_1)),\alpha(s(p_1))](w_2)=&f_{j}(p_1)-f_{j}(p_2)\\
    +&2\big(iv_{0}(s(p_2)),D_{\alpha_j}\phi(s(p_1))-D_{\alpha_j}\phi(s(p_2))\big)\\
    -&v_{_{j}}(s(p_2))\big(|\phi|^2(s(p_1))-|\phi|^2(s(p_2))\big)
    \end{split}
\end{equation}
\begin{claim}
\label{cl101}
For every $p_1\in U$, there exists $C,\delta>0$ such that for every $p_2\in U$ with $|p_1-p_2|\le \delta$
\begin{align}
        &||\alpha_j(s(p_1))-\alpha_j(s(p_2))||_{L^{\infty}}\le C|p_1-p_2|\hspace{.5cm}j=1,2\label{eq4.1c1l1000}\\
        &|||\alpha|^2(s(p_1))-|\alpha|^2(s(p_2))||_{L^{\infty}}\le C|p_1-p_2|\label{eq4.2c1l1000}\\
        &|||\phi|^{2}(s(p_1))-|\phi|^2(s(p_2))||_{L^{\infty}}\le C|p_1-p_2|\label{eq4.3c1l1000}\\
        &||\big(D_{\alpha_j}\phi\big)(s(p_1))-\big(D_{\alpha_j}\phi\big)(s(p_2))||_{L^{\infty}}\le C|p_1-p_2|\label{eq4.4c1l1000}
\end{align}
\end{claim}
\begin{proof}
This follows from the estimates in proposition \ref{p30.1}.
\end{proof}
\begin{claim}
\label{cl103.1}
For every $p_1\in U$, there there exists $C,\delta>0$ such that for every $p_2\in U$ with $|p_1-p_2|< \delta$,
\begin{equation}
    \label{eq1cl103}
    ||v(.;p_2)||_{_{H^2(\mathbb{R}^2)}}< C
\end{equation}
\end{claim}
\begin{proof}
This simply follows from lemma \ref{al2}.
\end{proof}
According to claims \ref{cl101}, \ref{cl103.1}, equations \ref{eq2c1l1000}, \ref{eq3c1l1000}, and lemma \ref{al2}, there exists $\delta_1, C_1>0$ such that if $|p_1-p_2|<\delta_1$, then 
\[
||w_2||_{_{H^2}}<C_1|p_1-p_2|
\]
Therefore, according to claim \ref{cl1001}, claim \ref{c1l1000} follows.
\end{proof}
\begin{claim}
\label{cl201}
Suppose that $p\in U$, $\tau\in T_{p}U$, and $\zeta=\big(Ds(p)\big)(\tau)$. There exists a function
\begin{equation}
    \label{eq1cl201}
    E_p(\tau)\in H^{2}\cap \big(T_{s(p)}M_N\big)^{\perp}
\end{equation}
such that 
\begin{equation}
    \label{eq2cl201}
    \begin{split}
        {L}_{\varphi}[\phi(s(p)),\alpha(s(p))](E_p(\tau))=&\big(Df_{0}(p)\big)(\tau)-2i\sum_{j=1}^2\big((D\alpha_j)(s(p))(\zeta)\big)\big(D_{\alpha_j}v_{\varphi}\big)\\
        -&3\big(\phi, D\phi(s(p))(\zeta)\big)v_{0}-2\sum_{j=1}^2v_{j}\big(D(D_{\alpha_j}\phi)\big)(s(p))(\zeta)
    \end{split}
\end{equation}
and
\begin{equation}
    \label{eq3cl201}
    \begin{split}
        {L}_{A_j}[\phi(s(p)),\alpha(s(p))](E_p(\tau))=&\big(D f_{j}(p)\big)(\tau)+2\Big(iv_{0},(D(D_{\alpha_j}\phi)\big)(s(p))(\zeta)\Big)\\
        -&v_{j}D\big(|\phi|^2\big)(s(p))(\zeta)
    \end{split}
\end{equation}
\end{claim}
\begin{proof}
Let 
\begin{equation}
    \label{eq4cl201}
    \begin{split}
    R_{0}=&\big(Df_{0}(p)\big)(\tau)-2i\sum_{j=1}^2\big((D\alpha_j)(s(p))(\zeta)\big)\big(D_{\alpha_j}v_{0}\big)\\
        -&3\big(\phi, (D\phi(s(p))(\tau)\big)v_{0}-2i\sum_{j=1}^2v_{j}\big(D(D_{\alpha_j}\phi)\big)(s(p))(\zeta)
    \end{split}
\end{equation}
and
\begin{equation}
    \label{eq5cl201}
    \begin{split}
        R_{j}=&\big(D f_{j}(p)\big)(\tau)+2\Big(iv_{0},(D(D_{\alpha_j}\phi)\big)(s(p))(\zeta)\Big)\\
        -&v_{j}D\big(|\phi|^2\big)(s(p))(\zeta)
    \end{split}
\end{equation}
for $j=1,2$.
\begin{claim}
\label{cl204}
$R=(R_0,R_1,R_2)\in L^2(\mathbb{R}^2)$ and 
\begin{equation}
    \label{eq1cl204}
    R\perp T_{_{s(p)}}M_N
\end{equation}
\end{claim}
\begin{proof}
According to estimates \ref{p30.1}, the facts that $v\in H^2$ and $\alpha(s(p))\in L^{\infty}$, and the fact that $f$ is differentiable, we deduce that $R\in L^2$.
\\
\\
We have:
\begin{equation}
    \label{eq2cl204}
    f\perp T_{_{s(p)}}M_N
\end{equation}
Therefore, 
\begin{equation}
    \label{eq4cl204}
    \big{\langle} Df(p)(\tau),n_{\mu}(s(p))\big{\rangle}_{L^2}+\big{\langle}(f(p),\big(Dn_{\mu}\big)(s(p))(\zeta)\big{\rangle}_{L^2}=0
\end{equation}
We have:
\begin{equation}
    \label{eq5cl204}
    \begin{split}
    f_{0}(p)&=-\Delta v_{0}(p)+2i\sum_{j=1}^2\alpha_j\partial_jv_{0}+|\alpha|^2v_{0}\\
    &+\frac{1}{2}\big(3|\phi|^2-1\big)v_{0}+2i\sum_{j=1}^2v_{j}D_{\alpha_j}\phi
    \end{split}
\end{equation}
and
\begin{equation}
    \label{eq6cl204}
    \begin{split}
        f_{j}(p)=-\Delta v_{j}(p)-2\big(iv_{0},D_{\alpha_j}\phi(s(p))\big)+v_{j}(p)|\phi|^2(s(p))
    \end{split}
\end{equation}
According to \eqref{eq4cl204}, \eqref{eq5cl204}, and \eqref{eq6cl204}, we have
\begin{equation}
    \label{eq7cl204}
    \begin{split}
        \Big(Df(p)(\tau),n_{\mu}(s(p))\Big)_{L^2}&=+\Bigg(v_{0}(p),\Big(D(\Delta n_{\mu,\varphi})(s(p))\Big)(\zeta)\Bigg)_{L^2}\\
        &-2\sum_{j=1}^2\Big(v_{0},i\alpha_j D\big(\partial_j n_{_{\mu,\varphi}}\big)(s(p))(\zeta)\Big)_{L^2}\\
        &-\Big(v_{0},|\alpha|^2(s(p))\big(Dn_{\mu,\varphi}\big)(s(p))(\zeta)\Big)_{L^2}\\
        &-\Big(v_{0},\frac{1}{2}\big(3|\phi|^2-1\big)\big(Dn_{\mu,0}\big)(s(p))(\zeta)\Big)_{L^2}\\
        &-2\sum_{j=1}^2\Big(v_{j},\Big(i\big(D_{\alpha_j}\phi(s(p)),\big(Dn_{\mu,\varphi}\big)(s(p))(\zeta)\big)\Big)\Big)_{_{L^2}}\\
        &+2\sum_{j=1}^2\Big(v_{j}(p), \big(D(\Delta n_{\mu,j})\big)(s(p))(\zeta)\Big)_{L^2}\\
    &-2\sum_{j=1}^2\Big(v_{0},iD_{\alpha_j}\phi(s(p))\big(Dn_{\mu,A_j}\big)(s(p))(\zeta) \Big)_{L^2}\\
        &-\Big(v_{j}(p),|\phi|^2(s(p))\big(Dn_{\mu,j}\big)(s(p))(\zeta)\Big)_{L^2}
        \end{split}
\end{equation}
\begin{equation}
    \label{eq8cl204}
    \begin{split}
        \Big(2i\big(D\alpha_j\big)(s(p))(\zeta))(D_{\alpha_j}v_{\varphi}), n_{_{\mu,\varphi}}\Big)_{L^2}&=\Big(2i\big(D\alpha_j\big)(s(p))(\zeta))\partial_j v_{0},n_{\mu,0}\Big)_{L^2}\\
        &+\Big(\big(D|\alpha|^2\big)(s(p))(\zeta))v_{0},n_{\mu,\varphi}\Big)_{L^2}
    \end{split}
\end{equation}
According to the fact that $v_{0}\in H^2(\mathbb{R}^2)$
\begin{equation}
    \label{eq9cl204}
    \big(D\alpha_j\big)(s(p))(\zeta)) v_{0}\in  H^2(\mathbb{R}^2)
\end{equation}
Therefore,
\begin{equation}
    \label{eq10cl204}
    \Big(2i\big(D\alpha_j\big)(s(p))(\zeta))\partial_j v_{0},n_{\mu,0}\Big)_{L^2}=-\Big(2i\big(D\alpha_j\big)(s(p))(\zeta))v_{0},\partial_jn_{\mu,0}\Big)_{L^2}
\end{equation}
According to \eqref{eq4cl204},\eqref{eq5cl204}, \eqref{eq7cl204}, \eqref{eq8cl204}, and \eqref{eq10cl204}, we have:
\begin{equation}
    \label{eq11cl204}
    \begin{split}
        (R,n_{\mu})_{L^2}&=-\Big(v_{0}, \big(D{L}_{\varphi}[\phi(s(p)),\alpha(s(p))](n_{\mu})\big)(\zeta)\Big)_{_{L^2}}\\
        &-\sum_{j=1}^2\Big(v_{j},\big(D{L}_{A_j}[\phi(s(p)),\alpha(s(p))](n_{\mu})\big)(\zeta)\Big)_{_{L^2}}\\
        &=0
    \end{split}
\end{equation}
\end{proof}
Therefore, according to Lemma \ref{al2}, the statement holds.
\end{proof}
Suppose that $p\in U$ and $\tau\in T_p U$ and $\zeta=Ds(p)(\tau)$. Suppose that the function $E_p(\tau):\mathbb{R}^2\to\mathbb{C}\times \mathbb{R}^2$ is the function which satisfies \eqref{eq1cl201}, \eqref{eq2cl201}, and \eqref{eq3cl201}. Let $m_p(\tau)\in T_{s(p)}M_N$ be such that 
\begin{equation}
    \label{eq6l40}
    \big(m_p(\tau),n_{\mu}(s(p))\big)_{_{L^2}}=-\Big(v(p),\big(Dn_{\mu}(s(p))(\zeta)\big)\Big)_{_{L^2}}
\end{equation}
for any $\mu\in\{1,2,3,4\}$. Let:
\begin{equation}
    \label{eq7l40}
    S_p(\tau)=E_p(\tau)+m_p(\tau)
\end{equation}
Let
\begin{equation}
    \label{eq8l40}
    w_p(\tau)=v(p+\tau)-v(p)-S_p(\tau)
\end{equation}
\begin{claim}
\label{cl60l40}
For any $p\in U, \epsilon >0$, there exists $\delta>0$ such that if $|\tau|<\delta$, then $|w_p(\tau)|_{H^2}<\epsilon |\tau|$.
\end{claim}
\begin{proof}
Suppose that $A_{\mu}(\tau)=\langle w_p(\tau),n_{\mu}(s(p))\rangle_{_{L^2}}$.
\begin{claim}
\label{cl60.5l40}
For any $\epsilon >0$, there exists $\delta>0$ such that if $|\tau|<\delta$, then $|A_{\mu}(\tau)|<\epsilon|\tau|$.
\end{claim}
\begin{proof}
\begin{equation}
    \label{eq1l60.5l40}
    \begin{split}
      A_{\mu}(\tau)=&-\Big(v(p), n_{\mu}\big(s(p+\tau)\big)-n_{\mu}\big(s(p)\big)-Dn_{\mu}\big(s(p)\big)(\zeta)\Big)_{L^2}\\
      &+\Big(v(p)-v(p+\tau),n_{\mu}(s(p+\tau))-n_{\mu}(s(p))\Big)_{L^2}
    \end{split}
\end{equation}
Therefore,
\begin{equation}
    \label{eq2l60.5l40}
    \begin{split}
      |A_{\mu}(\tau)|\le &||v(p)||_{_{L^2}} \Big{|}\Big{|}n_{\mu}\big(s(p+\tau)\big)-n_{\mu}\big(s(p)\big)-Dn_{\mu}\big(s(p)\big)(\zeta)\Big{|}\Big{|}_{_{L^2}}\\
      &+\big{|}\big{|}v(p)-v(p+\tau)\big{|}\big{|}_{_{L^2}}\big{|}\big{|}n_{\mu}(s(p+\tau))-n_{\mu}(s(p))\big{|}\big{|}_{_{L^2}}
    \end{split}
\end{equation}
According to claim \ref{c1l1000}, the fact that $s$ is differentiable, and \eqref{eq2l60.5l40}, the statement follows.
\end{proof}
Suppose that $\hat{w}_p(\tau)$ is the projection of $w_{p}(\tau)$ on $\big(T_{s(p)}M_N\big)^{\perp}$, with respect the $L^2$ inner product. There holds:
\begin{equation}
    \label{eq1cl60l40}
    \begin{split}
       &{L}_{\phi}[\phi(s(p)),\alpha(s(p))](\hat{w}_p(\tau))=F_{_{0,p}}(\tau)\\
       &{L}_{A_j}[\phi(s(p)),\alpha(s(p))](\hat{w}_p(\tau))=F_{_{j,p}}(\tau)
    \end{split}
\end{equation}
for $j=1,2$ where:
\begin{equation}
    \label{eq2cl60l40}
    \begin{split}
    F_{_{0,p}}(\tau)=&f_{0}(p+\tau)-f_{0}(p)-Df_{0}(p)(\tau)\\
    -&2i\sum_{j=1}^2\big(\alpha_j(s(p+\tau))-\alpha_j(s(p))\big)\big(\partial_j v_{0}(p+\tau)-\partial_jv_{0}(p)\big)\\
    -&2i\sum_{j=1}^2\Big(\alpha_j\big(s(p+\tau)\big)-\alpha_j\big(s(p)\big)-\big(D\alpha_j\big)\big(s(p)\big)(\zeta)\Big)\big(\partial_jv_{0}(p)\big)\\
    -&\big(|\alpha|^2(s(p+\tau))-|\alpha|^2(s(p))\big)\big(v_{0}(p+\tau)-v_{0}(p)\big)\\
    -&\Big(|\alpha|^2\big(s(p+\tau)\big)-|\alpha|^2\big(s(p)\big)-\big(D|\alpha|^2\big)\big(s(p)\big)(\zeta)\Big)v_{_{0}}(p)\\
    -&\frac{3}{2}\big(|\phi|^2(s(p+\tau))-|\phi|^2(s(p))\big)\big(v_{0}(p+\tau)-v_{0}(p)\big)\\
    -&\frac{3}{2}\Big(|\phi|^2(s(p+\tau))-|\phi|^2(s(p))-\big(D|\phi|^2\big)\big(s(p)\big)(\zeta)\Big)v_{_{\phi}}(p)\\
    -&2i\sum_{j=1}^2\Big(\big(D_{\alpha_j}\phi\big)(s(p+\tau))-\big(D_{\alpha_j}\phi\big)(s(p))\Big)\big(v_{j}(p+\tau)-v_{j}(p)\big)\\
    -&2i\sum_{j=1}^2\Big(\big(D_{\alpha_j}\phi\big)(s(p+\tau))-\big(D_{\alpha_j}\phi\big)(s(p))-D\big(D_{\alpha_j}\phi\big)\big(s(p)\big)(\zeta)\Big)v_{a_j}(p)
    \end{split}
\end{equation}
and:
\begin{equation}
    \label{eq3cl60l40}
    \begin{split}
    F_{_{j,p}}(\tau)=&f_{j}(p+\tau)-f_{j}(p)-\big(D f_{j}(p)\big)(\tau)\\
    +&2\big(iv_{0}(p+\tau)-iv_{0}(p),D_{\alpha_j}\phi(s(p+\tau))-D_{\alpha_j}\phi(s(p))\big)\\
    +&2\Big(iv_{_{0}}(p),\big(D_{_{\alpha_j}}\phi\big)\big(s(p+\tau)\big)-\big(D_{\alpha_j}\phi\big)\big(s(p)\big)-D\big(D_{\alpha_j}\phi\big)\big(s(p)\big)(\zeta)\Big)\\
    -&\Big(v_{_{j}}(p+\tau)-v_{_{j}}(p)\Big)\big(|\phi|^2(s(p+\tau))-|\phi|^2(s(p))\big)\\
    -&v_{_{j}}(p)\Big(|\phi|^2\big(s(p+\tau)\big)-|\phi|^2\big(s(p)\big)-\big(D|\varphi|^2\big)\big(s(p)\big)(\zeta)\Big)
    \end{split}
\end{equation}
According to estimates \ref{p30.1}, the fact that $\alpha\in L^{\infty}$ and  differentiability of $s$, for every $\epsilon>0$, there exists $\delta>0$ such that if $|\tau|<\delta$, then
\begin{equation}
    \label{eq4cl60l40}
    \begin{split}
    &\Big{|}\Big{|}\alpha_j\big(s(p+\tau)\big)-\alpha_j\big(s(p)\big)-\big(D\alpha_j\big)\big(s(p)\big)(\zeta)\Big)\Big{|}\Big{|}_{L^{\infty}}<\epsilon|\tau|\\
    &\Big{|}\Big{|}|\alpha|^2\big(s(p+\tau)\big)-|\alpha|^2\big(s(p)\big)-\big(D|\alpha|^2\big)\big(s(p)\big)(\zeta)\Big{|}\Big{|}_{L^{\infty}}<\epsilon|\tau|\\
    &\Big{|}\Big{|}|\phi|^2(s(p+\tau))-|\phi|^2(s(p))-\big(D|\phi|^2\big)\big(s(p)\big)(\zeta)\Big{|}\Big{|}_{L^{\infty}}<\epsilon|\tau|\\
    &\Big{|}\Big{|}\big(D_{\alpha_j}\phi\big)(s(p+\tau))-\big(D_{\alpha_j}\phi\big)(s(p))-D\big(D_{\alpha_j}\phi\big)\big(s(p)\big)(\zeta)\Big{|}\Big{|}_{L^{\infty}}<\epsilon|\tau|
    \end{split}
\end{equation}
Therefore, according to claims \ref{c1l1000} and \ref{cl101}, equations \eqref{eq2cl60l40}, \eqref{eq3cl60l40}, and differentiability of $f$ for every $\epsilon>0$, there exists $\delta>0$ such that if $|\tau|<\delta$, then
\begin{equation}
    \label{eq5l40l60}
    \big{|}\big{|}F_{0,p}(\tau)\big{|}\big{|}_{L^2},\big{|}\big{|}F_{_{1,p}}(\tau)\big{|}\big{|}_{L^2},\big{|}\big{|}F_{_{2,p}}(\tau)\big{|}\big{|}_{L^2}<\epsilon |\tau|
\end{equation}
Since $\hat{w}_p(\tau)\in H^{2}$, according to \eqref{eq1cl60l40},
\begin{equation}
    \label{eq6l40l60}
    \begin{pmatrix}
    &F_{0,p}(\tau)\\
    &F_{1,p}(\tau)\\
    &F_{2,p}(\tau)
    \end{pmatrix}
    \perp T_{s(p)}M_N
\end{equation}
Therefore, according to lemma \ref{al2}, for every $\epsilon>0$, there exists $\delta>0$ such that if $|\tau|<\delta$, then $||\hat{w}_p(\tau)||_{H^2}<\epsilon|\tau|$. Therefore, according to claim \ref{cl60.5l40}, for every $\epsilon>0$, there exists $\delta>0$ such that if $|\tau|<\delta$, then $|w_p(\tau)|_{H^2}<\epsilon|\tau|$. 
\end{proof}
Claim \ref{cl60l40} implies proposition \ref{p2}.
\end{proof}
\begin{lemma}
\label{l40.4}
Suppose that 
\[
\eta=\Big(\eta_{0},\eta_{1},\eta_{2}\Big)\in \Big(H^4(\mathbb{R}^2,\mathbb{C}), H^4(\mathbb{R}^2,\mathbb{R}), H^4(\mathbb{R}^2,\mathbb{R})\Big)
\]
Suppose that 
\[
p=(\phi,\alpha)\in K\subset M_N
\]
where $K$ is compact in $M_N$. Suppose that
\[
\eta\perp T_{p}M_N
\]
and
\begin{equation}
    \label{eqln211.9}
    \begin{split}
        &{L}_{\varphi}[\tilde{\varphi},\tilde{\alpha}]=\eta_{_{0}}\\
        &{L}_{A_j}[\tilde{\varphi},\tilde{\alpha}]=\eta_{_{j}}
    \end{split}
\end{equation}
for $j=1,2$. Suppose that $|\eta|_{H^1}< M$ and
\[
\eta(x)<Ae^{-\gamma|x|}
\]
Then, there exist a constant
\[
B=B(K,A,M)
\]
such that
\[
|u(x)|<Be^{-\frac{\gamma}{2}|x|}
\]
and
\[
|Du(x)|<Be^{-\frac{\gamma}{2}|x|}
\]
\end{lemma}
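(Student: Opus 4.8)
The plan is to regard \eqref{eqln211.9} as a diagonal elliptic system whose zeroth-order potential is uniformly positive outside a fixed ball, and then to run a comparison argument for $g:=|u_0|^2+|u_1|^2+|u_2|^2$, where $u=(u_0,u_1,u_2)$ denotes the solution. First I would record uniform a priori regularity. By Lemma~\ref{al2} (together with compactness of $K$) the solution lies in $(T_pM_N)^\perp$ and $\|u\|_{H^2}\le C(K)\|\eta\|_{L^2}$; since the principal part of the system is $-\Delta$ on each component, with coefficients built from $\phi,\alpha$ and their derivatives — smooth, and bounded together with all derivatives uniformly for $(\phi,\alpha)\in K$ — elliptic regularity and $\eta\in H^4$ give $u\in H^6$ with, in particular, $\|u\|_{H^3}\le C(K)(\|\eta\|_{H^1}+\|\eta\|_{L^2})\le C(K)M=:C_1$. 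The embedding $H^3(\mathbb{R}^2)\hookrightarrow C^{1,\beta}(\mathbb{R}^2)$ into functions vanishing at infinity then gives $\|u\|_{C^0},\|\nabla u\|_{C^0}\le C_1$ and $u,\nabla u\to 0$ as $|x|\to\infty$.

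Next I would fix the behaviour at infinity uniformly over $K$: by the exponential localization of vortex configurations (Proposition~\ref{p30.1}) and compactness of $K$ there are $R_0,c_0,\sigma>0$ with $\tfrac12(3|\phi(x)|^2-1)\ge c_0$, $|\phi(x)|^2\ge c_0$, and $|\alpha(x)|+\sum_j|D_{\alpha_j}\phi(x)|\le C e^{-\sigma|x|}$ for every $(\phi,\alpha)\in K$ and $|x|\ge R_0$; enlarging $R_0$ we may also assume $Ce^{-\sigma|x|}\le\tfrac{c_0}{2}$ there. Solving \eqref{eq5cl204}--\eqref{eq6cl204} for $-\Delta u_0,-\Delta u_j$ and using $\Delta g\ge 2\,\mathrm{Re}\,\langle\bar u,\Delta u\rangle$, the uniformly positive potential contributes $\ge c_0 g$, the first-order term $2i\sum_j\alpha_j\partial_j u_0$ is dominated by $2|\alpha|\,|\nabla u|\,|u|\le 2C_1 C e^{-\sigma|x|}\sqrt g$, the $D_{\alpha_j}\phi$-coupling terms by $Ce^{-\sigma|x|}g\le\tfrac{c_0}{2}g$, and $|\langle\bar u,\eta\rangle|\le Ae^{-\gamma|x|}\sqrt g\le AC_1 e^{-\gamma|x|}$. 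This yields, on $\{|x|\ge R_0\}$, a differential inequality $\Delta g\ge c_1 g - h(x)$ with $c_1>0$, $h(x)=C_2 e^{-\gamma'|x|}$, $\gamma'=\min(\gamma,\sigma)$, $C_2=C_2(K,A,M)$.

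Then I would close with a comparison principle. Put $\gamma_*:=\min(\tfrac{\gamma}{2},\sigma,\tfrac12\sqrt{c_1})$, which equals $\tfrac{\gamma}{2}$ in the regime of interest (where $\gamma$ lies below the decay/spectral threshold), and set $G(x):=B_0 e^{-\gamma_*|x|}$ with $B_0:=\max\!\big(\tfrac{C_2}{c_1-\gamma_*^2},\,C_1^2 e^{\gamma_* R_0}\big)=B_0(K,A,M)$. Since $\Delta G=(\gamma_*^2-\gamma_*/|x|)G\le\gamma_*^2 G$ for $x\neq 0$, one checks $\Delta G\le c_1 G - h$ on $\{|x|\ge R_0\}$ and $G\ge C_1^2\ge g$ on $\{|x|=R_0\}$; hence $w:=g-G$ satisfies $\Delta w\ge c_1 w$ on $\{|x|>R_0\}$, $w\le 0$ on $\{|x|=R_0\}$, and $w\to 0$ at infinity, so the maximum principle (e.g.\ on annuli $R_0<|x|<R'$, $R'\to\infty$) forces $w\le 0$, i.e.\ $g\le B_0 e^{-\gamma_*|x|}$ for $|x|\ge R_0$. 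Combined with $g\le C_1^2$ on $\{|x|\le R_0\}$ this gives $|u(x)|\le Be^{-\gamma|x|/2}$ with $B=B(K,A,M)$. For the gradient I would apply interior $W^{2,p}$ (Calderón--Zygmund) estimates to \eqref{eqln211.9} on unit balls $B_1(x_0)$ — the coefficients being uniformly bounded over $K$ — to get $\|u\|_{C^1(B_{1/4}(x_0))}\le C(\|u\|_{L^p(B_1(x_0))}+\|\eta\|_{L^p(B_1(x_0))})\le B'e^{-\gamma|x_0|/2}$ for $|x_0|\ge R_0+1$, while $\|\nabla u\|_{C^0}\le C_1$ handles $|x_0|\le R_0+1$; after enlarging $B$, $|Du(x)|\le Be^{-\gamma|x|/2}$.

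The main obstacle is keeping $B$ dependent only on $(K,A,M)$ and not on the particular solution: naively the radius beyond which the decay "switches on" depends on $u$. The resolution is exactly the use above of the solution-independent constant $C_1$ — the uniform $C^1$ bound coming from the uniform $H^3$ estimate — both as the boundary datum on $\{|x|=R_0\}$ and to absorb or estimate all first-order and coupling terms by the fixed, exponentially small forcing $h$; once that is in place a single exponential supersolution suffices. The remaining ingredients — uniform exponential decay of $\alpha$ and $D_{\alpha_j}\phi$ over $K$, and the interior elliptic estimates for $Du$ — are routine given Proposition~\ref{p30.1}.
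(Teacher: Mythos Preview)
Your approach is sound and genuinely different from the paper's. The paper does not run a maximum-principle comparison on $|u|^2$; instead it invokes Lemma~\ref{al2} directly for the pointwise decay of $u$, and for $Du$ it uses a translation trick: setting $v(y)(x)=u(x-y)$, $s(y)=p-y$, and similarly translating $\eta$, one lands in the setting of Proposition~\ref{p2}, which shows $v:\mathbb{R}^2\to H^2$ is differentiable with $Dv(0)$ (essentially $-\nabla u$) decomposing as $m+E$, where $E$ solves an equation of the same type with right-hand side built from $D\eta$ and products of $u$ with exponentially decaying vortex quantities. A second appeal to Lemma~\ref{al2} then closes the argument. Your route is more self-contained and elementary --- it avoids both the parametrized-differentiability machinery of Proposition~\ref{p2} and whatever decay content is packed into the unstated Lemma~\ref{al2} --- while the paper's route has the structural advantage of reusing exactly the framework that drives the inductive Lemmas~\ref{l40.5} and~\ref{ln4}.

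One slip to fix: with your choice $\gamma_*=\min(\tfrac{\gamma}{2},\sigma,\tfrac12\sqrt{c_1})$ you obtain $g=|u|^2\le B_0e^{-\gamma_*|x|}$, hence only $|u|\le\sqrt{B_0}\,e^{-\gamma_*|x|/2}$, which is decay at rate $\gamma/4$, not $\gamma/2$. You should take $\gamma_*=\min(\gamma,\sigma,\tfrac12\sqrt{c_1})$: the forcing $h$ already decays at rate $\gamma'=\min(\gamma,\sigma)\ge\gamma_*$, so the same supersolution comparison then gives $g\lesssim e^{-\gamma|x|}$ (in the regime $\gamma\le\min(\sigma,\tfrac12\sqrt{c_1})$ that you correctly flag), and taking square roots yields $|u|\lesssim e^{-\gamma|x|/2}$ as required.
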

\begin{proof}
According to lemma \ref{al2}, there exists $R=R(K)$ and $A^{\prime}=A^{\prime}(K,A)$ such that if $|x|>R$, then
\begin{equation}
\label{new.new.eq1}
|u(x)|< A^{\prime}e^{-\frac{\gamma}{2}|x|}
\end{equation}
Suppose that
\begin{equation}
    \label{eq1.9l40.1}
    u=
    \begin{pmatrix}
    u_{0}\\
    u_{_{1}}\\
    u_{_{2}}
    \end{pmatrix}
\end{equation}
We define the functions
\begin{equation}
    \label{eq2140.1}
    \begin{split}
    &s:\mathbb{R}^2\to M_N\\
        &\eta_{0}:{\mathbb{R}^2}\to L^2\big(\mathbb{R}^2,\mathbb{C}\big)\\
        &\eta_{_{j}}:\mathbb{R}^2\to L^2\big(\mathbb{R}^2,\mathbb{R}\big)
        \hspace{.5cm}{j=1,2}\\
        &v_{0}: \mathbb{R}^2\to H^2\big(\mathbb{R}^2,\mathbb{C}\big)\\
        &v_{_{j}}: \mathbb{R}^2\to H^2\big(\mathbb{R}^2,\mathbb{R}\big)\hspace{.5cm}{j=1,2}
    \end{split}
\end{equation}
by
\begin{equation}
    \label{eq3l40.1}
    \begin{split}
    &s(y)=(p-y)\\
        &\eta_{_{k}}(y)(x)=\eta_{A_j}(x-y)\hspace{.2cm}k=0,1,2\\
        &v_{_{y}}(y)(x)=u_{k}(x-y)\hspace{.2cm}k=0,1,2
    \end{split}
\end{equation}
where $(p-y)$ refers to translation of the vortex centers by the vector $y$ in the standard coordinate system in $\mathbb{R}^2$. 
Then, for any $z\in \mathbb{R}^2$, we have:
\begin{equation}
    \label{eq3.1l40.1}
    \begin{split}
        &{L}_{\varphi}[\phi(s(z),\alpha(s(z)](v(z))=\eta_{_{0}}(z)\\
        &{L}_{A_j}[\phi(s(z)),\alpha(s(z)](v(z))=\eta_{_{j}}(z)\hspace{.2cm}j=1,2
    \end{split}
\end{equation}
Since $\eta\in H^4$, the function $\eta:\mathbb{R}^2\to L^2\big(\mathbb{R}^2,\mathbb{R}\big)$ is differetiable. Therefore, by proposition \ref{p2}, the function $v: \mathbb{R}^2\to H^2\big(\mathbb{R}^2,\mathbb{C}\times\mathbb{R}^2\big)$ is differentiable. Therefore, by the Sobolev embedding, the function $u:\mathbb{R}^2\to \mathbb{C}\times \mathbb{R}^2$ is differentiable. According to proposition \ref{p2} and the Sobolev embedding, there is a constant $C_1={C}_1(K,A)$ such that
\[
|\partial_j u(x)|\le C||Dv||_
{H^2}\le C_1 
\]
for any $x$. Lemma \ref{al2} and this imply the statement. 
\end{proof}

\begin{lemma}
\label{l40.5}
Suppose that $U\subset \mathbb{R}^k$ for some $k>0$ is an open set and $s:U\to M_N$ is $n$-times differentiable. Suppose that 

\begin{equation}
    \label{eqln211}
   f= \big(f_{_{0}},f_{_{1}},f_{_{2}}\big):U\to \Big(L^2\big(\mathbb{R}^2,\mathbb{C}\big),L^2\big(\mathbb{R}^2,\mathbb{R}\big),L^2\big(\mathbb{R}^2,\mathbb{R}\big)\Big)
\end{equation}
is $m$-times differentiable. Assume 
\begin{equation}
    \label{eqln212}
    f(p)\perp T_{_{s(p)}}M_N
\end{equation}
for each $p\in U$. Suppose that
\[
v:U\to \Big(H^2\big(\mathbb{R}^2,\mathbb{C}\big),H^2\big(\mathbb{R}^2,\mathbb{R}\big),H^2\big(\mathbb{R}^2,\mathbb{R}\big)\Big)
\] 
has the property
\begin{equation}
        \label{eqln213.1}
        v(p)\in \big(T_{s(p)}M_N\big)^{\perp}
    \end{equation}
    for any $p\in U$ and
\begin{equation}
    \label{eqln213}
    L[\phi(s(p)),\alpha(s(p))]v(p)=f(p)
\end{equation}
for $j=1,2$ and any $p\in U$. Then, $v$ is $m$-times differentiable, provided $n\ge m$.
\end{lemma}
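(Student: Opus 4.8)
The plan is to argue by induction on $m$. The base case $m=1$ is exactly Proposition \ref{p2}, which moreover supplies the decomposition $Dv(p)(\tau)=m_p(\tau)+E_p(\tau)$ that drives the inductive step. So suppose $m\ge 2$, that the lemma is already known with $m$ replaced by $m-1$ (for every admissible choice of $s$, $f$, $v$), and let $s,f,v$ be as in the statement with $s$ being $n$-times differentiable, $f$ being $m$-times differentiable, and $n\ge m$. Since $f$ is in particular $(m-1)$-times differentiable and $n\ge m-1$, the inductive hypothesis already yields that $v:U\to H^2(\mathbb{R}^2,\mathbb{C}\times\mathbb{R}^2)$ is $(m-1)$-times differentiable; it remains to gain one further derivative, and by the standard characterization of higher differentiability for maps into a Banach space it suffices to show that $Dv:U\to\mathcal{L}(\mathbb{R}^k,H^2)$ is $(m-1)$-times differentiable. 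Since all the objects below are linear in $\tau$, I would fix a basis $e_1,\dots,e_k$ of $\mathbb{R}^k$, put $\zeta_i(p)=Ds(p)(e_i)$, and treat $p\mapsto Dv(p)(e_i)=m_p(e_i)+E_p(e_i)$ one summand at a time.

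For the tangential term, $m_p(e_i)$ is the unique element of the finite-dimensional space $T_{s(p)}M_N$ whose $L^2$-inner products against the $n_\mu(s(p))$ equal $-\big(v(p),Dn_\mu(s(p))(\zeta_i(p))\big)_{L^2}$. As $s$ is $n$-times differentiable, the maps $p\mapsto n_\mu(s(p))$ and the invertible Gram matrix $p\mapsto\big(n_\mu(s(p)),n_\nu(s(p))\big)_{L^2}$ are $n$-times differentiable, while $p\mapsto Dn_\mu(s(p))(\zeta_i(p))$ is $(n-1)$-times differentiable; since $v$ is $(m-1)$-times differentiable and $n-1\ge m-1$, solving the finite linear system shows $p\mapsto m_p(e_i)$ is $(m-1)$-times differentiable. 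For the normal term, recall from the proof of Proposition \ref{p2} (Claim \ref{cl201}, Claim \ref{cl204}) that $E_p(e_i)$ is the unique element of $H^2\cap\big(T_{s(p)}M_N\big)^{\perp}$ solving $L[\phi(s(p)),\alpha(s(p))]E_p(e_i)=R_p(e_i)$, where $R_p(e_i)=(R_0,R_1,R_2)$ is given by \eqref{eq4cl201}, \eqref{eq5cl201} and satisfies $R_p(e_i)\perp T_{s(p)}M_N$; in other words $p\mapsto E_p(e_i)$ is again a solution of a system of exactly the type considered in the lemma, with right-hand side $p\mapsto R_p(e_i)$. The key point is that each summand of $R_p(e_i)$ is a product of three kinds of factors: a component of $Df(p)(e_i)$, which is $(m-1)$-times differentiable because $f$ is $m$-times differentiable; a background coefficient such as $D\alpha_j(s(p))(\zeta_i(p))$, $\big(\phi,D\phi(s(p))(\zeta_i(p))\big)$, $D(D_{\alpha_j}\phi)(s(p))(\zeta_i(p))$ or $D(|\phi|^2)(s(p))(\zeta_i(p))$, which is $(n-1)$-times differentiable into the relevant $L^\infty$-space and hence $(m-1)$-times differentiable since $n-1\ge m-1$; and a component of $v(p)$ or of $D_{\alpha_j}v_0(p)=\partial_j v_0(p)+i\alpha_j(s(p))v_0(p)$, which is $(m-1)$-times differentiable into $H^2$, respectively $L^2$, using the $(m-1)$-fold differentiability of $v$ just established together with the $n$-fold differentiability of $p\mapsto\alpha_j(s(p))$. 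Bounding these products in $L^2$ by the multiplication estimates of Proposition \ref{p30.1} and Lemma \ref{al2}, one obtains that $p\mapsto R_p(e_i)$ is an $(m-1)$-times differentiable map $U\to L^2(\mathbb{R}^2,\mathbb{C}\times\mathbb{R}^2)$.

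It then remains to apply the inductive hypothesis a second time, now to the datum $s$ (still $n$-times differentiable, and $n\ge m-1$), the $(m-1)$-times differentiable right-hand side $p\mapsto R_p(e_i)$, and the function $p\mapsto E_p(e_i)\in\big(T_{s(p)}M_N\big)^{\perp}$ solving the corresponding system; this gives that $p\mapsto E_p(e_i)$ is $(m-1)$-times differentiable into $H^2$. Adding the two contributions shows that $Dv$ is $(m-1)$-times differentiable, hence $v$ is $m$-times differentiable, which closes the induction. I expect the only genuinely delicate point to be bookkeeping rather than analysis: the inductive hypothesis must be used twice, first to upgrade $v$ itself to class $C^{m-1}$ so that the right-hand side $R$ is a legitimate $C^{m-1}$ map, and only afterwards for $E$, so that no circularity arises; and the assumption $n\ge m$ is used precisely where the background coefficients $D(\cdot)(s(p))(\zeta_i(p))$ appearing in $R$ are required to be differentiable to order $m-1$.
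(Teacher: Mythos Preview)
Your proposal is correct and follows essentially the same approach as the paper: induction on $m$ with Proposition~\ref{p2} as the base case, then for the inductive step decomposing $\partial_k v$ (equivalently $Dv(p)(e_i)$) as $m+E$, verifying that the right-hand side $R$ defining $E$ is $(m-1)$-times differentiable and invoking the inductive hypothesis for $E$, while handling the tangential part $m$ via the finite linear system~\eqref{eqln213.2}. Your write-up is in fact slightly more explicit than the paper's about the need to invoke the inductive hypothesis twice (first on $v$ itself, then on $E$) and about where the condition $n\ge m$ is actually used.
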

\begin{proof}
We use induction on $m$. The base case holds by lemma \ref{l40.5}. Suppose that the lemma holds when $m=l$. Let $m=l+1$. 
Suppose that $p\in U$. According to proposition \ref{p2},
\begin{equation}
    \label{eqln213.1}
    \partial_k v=m+E
\end{equation}
where $m$ is an element of $T_{_{s(p)}}M_N$ which satisfies
\begin{equation}
    \label{eqln213.2}
    \Big(m,n_{\mu}\big(s(p)\big)\Big)=-\Big(v(p),\Big(\partial_k \big(n_{\mu}\big(s(p)\big)\big)\Big)
\end{equation}
for each $\mu\in\{1,2,3,4\}$ and $E=\big(E_{0}, E_{_{1}}, E_{_{2}}\big)$ is an element of 
\[
H^2\big(\mathbb{R}^2,\mathbb{C}\times \mathbb{R}^2\big)\cap\big(T_{_{s(p)}}M_N\big)^{\perp}
\] 
which satisfies
\begin{equation}
    \label{eq0l40}
    \begin{split}
    {L}_{\varphi}[\phi(s(p)),\alpha(s(p))](E_{0})=&\partial_kf_{0}(p)\\
    -&\sum_{j=1}^22i\Big(\partial_k \big(\alpha_j(s(p))\big)\Big)\Big(\big(D_{\alpha_j}v_{0}\big)(p)\Big)\\
    -&2\sum_{j=1}^2\alpha_j\big(s(p)\big)\Big(\partial_k\big(\alpha_j(s(p))\big)\Big)v_{0}(p)\\
    -&3\Big(\varphi(p),\partial_k\big(\varphi\big(s(p)\big)\big)\Big)v_{0}(p)\\
    -&2i\sum_{j=1}^2v_{j}(p)\partial_k\big((D_{\alpha_j}\phi)\big(s(p)\big)\big)
    \end{split}
\end{equation}
and:
\begin{equation}
    \label{eq0.1l40}
    \begin{split}
    {L}_{A_j}[\phi(s(p)),\alpha(s(p))](E_{_{j}})=&\partial_kf_{j}(p)\\
    +&2\Big(iv_{\phi}(p),\partial_k\big((D_{\alpha_j}\phi)\big(s(p)\big)\big)\Big)\\
    -&v_{j}(p)\partial_k\big(|\phi|^2\big(s(p)\big)\big)
    \end{split}
\end{equation}
for $j=1,2$. 
According to the estimates \ref{p30.1}, the functions
\[
\partial_k\big(\alpha_1\circ s\big), \partial_k\big(\alpha_2\circ s\big), \partial_k\big(\phi\circ s\big),\partial_k\big( |\phi|^2\circ s\big), \partial_k\big(\big(D_{_{\alpha_j}}\phi\big)\circ s\big):U\to L^{\infty}\big(\mathbb{R}^2,\mathbb{R}\big)
\]
are $(n-1)$-times differentiable. On the other hand, by the induction hypothesis, the functions
\[
v_{0}, v_{_{1}}, v_{_{2}}, \partial_{1}v_{_{0}}, \partial_{2}v_{_{0}},\partial_k f_{_{0}},\partial_{k}f_{_1},\partial_k f_{_{2}}:U\to L^{2}\big(\mathbb{R}^2,\mathbb{R}\big)
\]
are $l$-times differentiable. Therefore, according to equations \eqref{eq0l40} and \eqref{eq0.1l40}, we deduce that the functions
\begin{align}
    &{L}_{\varphi}[\phi(s(p)),\alpha(s(p))](E):U\to L^{2}\big(\mathbb{R}^2,\mathbb{R}\big)\\
    &{L}_{A_j}[\phi(s(p)),\alpha(s(p))](E):U\to L^{2}\big(\mathbb{R}^2,\mathbb{R}\big) 
\end{align}
for $j=1,2$, are $l$-times differentiable. Therefore, according to lemma \ref{l40.5}, the function $E:U\to H^{2}\big(\mathbb{R}^2, \mathbb{R}\big)$ is $l$-times differentiable. Since \eqref{eqln213.2} holds for any $\mu\in\{1,2,3,4\}$, by the induction hypothesis, the function $m:U\to H^2\big(\mathbb{R}^2,\mathbb{R}\big)$ is $l$-times differentiable. Therefore, the function $\partial_k v:U\to H^2\big(\mathbb{R}^2, \mathbb{R}\big)$ is $l$-times differentiable, for any $k$. Therefore, the function $v$ is $(l+1)$-times differentiable.

\end{proof}
\begin{lemma}
\label{ln4}
Suppose that $m\ge 4$, $U\subset \mathbb{R}^k$ is an open set and 
\begin{equation}
    \label{eqln211}
    f=\begin{pmatrix}
    f_{0}\\
    f_{1}\\
    f_{2}
    \end{pmatrix}
    \in 
    \mathcal{E}_m(\mathbb{R}^2,U,\mathbb{C}\times \mathbb{R}^2)
\end{equation}
for each $p\in U$. Suppose that $q:U\to M_N$ is $n$-times differentiable and $q(U)$ is a precompact subset of $M_N$ and $D^s(q)$ is bounded for every multi-index $s$ with $|s|\le n$ and
\begin{equation}
    \label{eqln212}
    f(.;p)\perp T_{_{q(p)}}M_N
\end{equation} 
and:
\begin{equation}
    \label{eqln213}
    \begin{split}
        &{L}_{\varphi}[\phi(q(p))),\alpha(q(p))](u(.;p))=f_{_{0}}(.;p)\\
        &{L}_{A_j}[\phi(q(p)),\alpha(q(p))](u(.;p))=f_{_{j}}(.;p)
    \end{split}
\end{equation}
for $j=1,2$, where
    \begin{equation}
        \label{eqln213.1}
        u(.;p)\in H^{2}\cap \big(T_{q(p)}M_N\big)^{\perp}
    \end{equation}
    for each $p\in U$. Then,
\begin{equation}
    \label{eqln214}
    u\in \mathcal{E}_{m-4}(\mathbb{R}^2,U,\mathbb{C}\times \mathbb{R}^2)
\end{equation}
if $n$ is large enough.
\end{lemma}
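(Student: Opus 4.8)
The plan is to run, in tandem, the parameter-regularity mechanism of Lemma \ref{l40.5} and the exponential-decay mechanism of Lemma \ref{l40.4}, by induction on the order of differentiation in the parameter $p$. Throughout, write $K:=\overline{q(U)}\subset M_N$, which is compact by hypothesis, and recall that $D^{s}q$ is bounded for $|s|\le n$.

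Since $f\in\mathcal{E}_m$ it is in particular $m$-times differentiable as a map $U\to L^2(\mathbb{R}^2,\mathbb{C}\times\mathbb{R}^2)$, so by Lemma \ref{l40.5} (with $n\ge m$) the map $u:U\to H^2$ is $m$-times differentiable. Iterating Proposition \ref{p2} exactly as in the proof of Lemma \ref{l40.5}, every parameter-derivative $\partial_p^{\beta}u$ (with $\beta$ a multi-index in the variables of $U$) decomposes as $\partial_p^{\beta}u=m_\beta+E_\beta$, where $m_\beta(p)\in T_{q(p)}M_N$ is fixed by pairings $\big(m_\beta,n_\mu(q(p))\big)_{L^2}$ that are finite combinations of $L^2$ inner products of lower-order $\partial_p^{\beta'}u$ against parameter-derivatives of $n_\mu\circ q$, and $E_\beta(p)\in H^2\cap(T_{q(p)}M_N)^{\perp}$ solves
\[
{L}_{\varphi}[\phi(q(p)),\alpha(q(p))]E_{\beta,0}=R_{\beta,0},\qquad {L}_{A_j}[\phi(q(p)),\alpha(q(p))]E_{\beta,j}=R_{\beta,j}\ \ (j=1,2).
\]
The right-hand side $R_\beta$ is, again by iterating the formulas of Proposition \ref{p2}, a finite sum of products of: the $\partial_p^{\beta'}f$ with $|\beta'|\le|\beta|$; the $\partial_p^{\beta'}u$ and $\partial_p^{\beta'}\partial_{x_j}u$ with $|\beta'|<|\beta|$; and parameter-derivatives of the coefficient functions $\alpha_j\circ q$, $|\alpha|^2\circ q$, $|\phi|^2\circ q$, $(D_{\alpha_j}\phi)\circ q$, together with $n_\mu\circ q$, $(\partial_j n_\mu)\circ q$, $(\Delta n_\mu)\circ q$. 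Moreover, the orthogonality argument of Claim \ref{cl204} (there proved for $|\beta|=1$) goes through unchanged and shows $R_\beta\perp T_{q(p)}M_N$ for every $\beta$.

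Next I would induct on $|\beta|$ to prove: for $|\beta|\le m-4$, the functions $\partial_p^{\beta}u(\cdot;p)$ and $\partial_p^{\beta}\partial_{x_j}u(\cdot;p)$ are bounded by $B\,e^{-\gamma'|x|}$ with $B,\gamma'$ independent of $p$. For $\beta=0$ this is Lemma \ref{l40.4} applied to $\eta=f(\cdot;p)$: $m\ge4$ and $f\in\mathcal{E}_m$ give $f(\cdot;p)\in H^4$ with a uniform $H^1$ bound and a uniform exponential bound over $p\in U$, and $f(\cdot;p)\perp T_{q(p)}M_N$ by hypothesis; the resulting constant $B=B(K,A,M)$ is uniform in $p$. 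For the step, suppose $|\beta|\le m-4$: by Proposition \ref{p30.1} and boundedness of $D^sq$, the coefficient-derivative factors in $R_\beta$ are uniformly bounded in $L^\infty$ (and exponentially decaying when built from $n_\mu\circ q$); by the inductive hypothesis the $\partial_p^{\beta'}u$ and $\partial_p^{\beta'}\partial_{x_j}u$ factors are uniformly exponentially decaying; and the $\partial_p^{\beta'}f$ ($|\beta'|\le m-4$) are exponentially decaying, lie in $H^4$, and are uniformly bounded in $H^1$ because $f\in\mathcal{E}_m$. Hence $R_\beta(\cdot;p)\in H^4$ is exponentially decaying with uniform $H^1$ bound — this is where the loss of four derivatives is incurred — and $R_\beta\perp T_{q(p)}M_N$, so Lemma \ref{l40.4} applies to $E_\beta$ and yields uniform exponential decay of $E_\beta$ and $DE_\beta$. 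The tangential piece $m_\beta$ is a linear combination of $n_\mu(q(p))$ with $p$-uniformly bounded coefficients, and the $n_\mu(q(p))$ decay exponentially uniformly over $K$ by Proposition \ref{p30.1}; hence $m_\beta$ and $Dm_\beta$ decay exponentially uniformly. Adding gives the claim for $\beta$. Combined with the continuity of $p\mapsto\partial_p^{\beta}u$ supplied by Lemma \ref{l40.5}, this shows $u\in\mathcal{E}_{m-4}$ once $n$ is large enough that every invocation of Lemma \ref{l40.5} above is legitimate; $n\ge m$ suffices.

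The main obstacle is the bookkeeping in the inductive step: verifying that iterating Proposition \ref{p2} really yields $R_\beta$ as a finite sum of products of exactly the listed factors, with no terms of worse decay or lower regularity; checking that $R_\beta\perp T_{q(p)}M_N$ persists under each differentiation (the analogue of Claim \ref{cl204}), since this is precisely what licenses Lemma \ref{l40.4} at every stage; and, most delicately, confirming that all constants in the exponential and $H^1$ estimates are independent of $p$, using only precompactness of $q(U)$, the bounds on $D^sq$, and the uniform-over-$K$ estimates of Proposition \ref{p30.1}. A secondary point is to count how many spatial derivatives of $R_\beta$ Lemma \ref{l40.4} actually consumes at each stage, so as to confirm the loss is exactly $4$ and to make the threshold on $n$ explicit in terms of $m$.
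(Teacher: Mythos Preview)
Your overall strategy --- induct on the order of differentiation, at each step split off the $T_{q(p)}M_N$ component, and feed the orthogonal piece back into Lemma~\ref{l40.4} --- is exactly the paper's. The one substantive device you are missing is the \emph{translation trick} that opens the paper's argument: one enlarges the parameter domain to $V=\mathbb{R}^2\times U$ and sets
\[
\hat q(y,p)=q(p)-y,\qquad g(y,p)(x)=f(x-y;p),\qquad v(y,p)(x)=u(x-y;p),
\]
so that \eqref{eqln213} becomes the same linearized system at $\hat q(y,p)$, and $y$-derivatives of $v$ are precisely spatial $x$-derivatives of $u$. Running Lemma~\ref{l40.5} and the decay induction on $V$ rather than on $U$ therefore delivers exponential bounds on \emph{all} mixed $(x,p)$-derivatives of $u$ in one stroke. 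Your induction, by contrast, runs only over $p$-multi-indices $\beta$, and each call to Lemma~\ref{l40.4} buys you a single extra spatial derivative; so as written you control $\partial_p^{\beta}u$ and $\partial_p^{\beta}\partial_{x_j}u$ but not $\partial_x^{\alpha}\partial_p^{\beta}u$ with $|\alpha|\ge 2$. If membership in $\mathcal{E}_{m-4}$ requires decay of those higher spatial derivatives, this is a genuine gap, and the translation trick is exactly what closes it --- while also dissolving most of the bookkeeping you flag at the end, since spatial and parameter derivatives are then handled by literally the same formula.

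A minor point: you need not redo Claim~\ref{cl204} to check $R_\beta\perp T_{q(p)}M_N$ at each stage. Once you pass to the orthogonal component $E_\beta$, the orthogonality of $L(E_\beta)$ is automatic from self-adjointness of $L$ and $L(n_\mu)=0$; the paper uses this implicitly rather than re-verifying it.
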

\begin{proof}
Suppose that $V=\mathbb{R}^2\times U$ and 
\begin{equation}
    \label{eq1.9l40}
    u=
    \begin{pmatrix}
    u_{0}\\
    u_{_{1}}\\
    u_{_{2}}
    \end{pmatrix}
\end{equation}
We define the functions
\begin{equation}
    \label{eq2140}
    \begin{split}
    &\hat{q}:V\to M_N\\
        &g_{0}:{V}\to L^2\big(\mathbb{R}^2,\mathbb{C}\big)\\
        &g_{_{j}}:V\to L^2\big(\mathbb{R}^2,\mathbb{R}\big)
        \hspace{.5cm}{j=1,2}\\
        &v_{0}: V\to H^2\big(\mathbb{R}^2,\mathbb{C}\big)\\
        &v_{_{j}}: V\to H^2\big(\mathbb{R}^2,\mathbb{R}\big)\hspace{.5cm}{j=1,2}
    \end{split}
\end{equation}
by
\begin{equation}
    \label{eq3l40}
    \begin{split}
    &\hat{q}(y,p)(x)=q(p)(x-y)\\
        &g_{_{k}}(y,p)(x)=f_{j}(x-y,p)\hspace{.2cm}k=0.1,2\\
        &v_{_{k}}(y,p)(x)=u_{k}(x-y,p)\hspace{.2cm}k=0,1,2
    \end{split}
\end{equation}
for any $x,y\in \mathbb{R}^2$  and any $p\in U$.
Then, for any $M\in V$, we have:
\begin{equation}
    \label{eq3.1l40}
    \begin{split}
        &{L}_{\varphi}[\varphi(\hat{q}(M),\alpha(\hat{q}(M)](v(M))=g_{_{\varphi}}(M)\\
        &{L}_{A_j}[\varphi(\hat{q}(M)),\alpha(\hat{q}(M)](v(M))=g_{_{A_j}}(M)\hspace{.2cm}j=1,2
    \end{split}
\end{equation}
Since $f\in \mathcal{E}_m(\mathbb{R}^2,U,\mathbb{C}\times \mathbb{R}^2)$, then the function $g:V\to L^2\big(\mathbb{R}^2,\mathbb{R}\big)$ is $m$-times differentiable. Therefore, by lemma \ref{l40.5}, the function $v: V\to H^2\big(\mathbb{R}^2,\mathbb{R}\big)$ is $m$-times differentiable. Therefore, the function $u:U\to H^2(\mathbb{R}^2,\mathbb{C}\times \mathbb{R}^2)$ is $m$-time differentiable.
\begin{claim}
\label{cl1n1l0.9}
There exists a number $a>0$ such that for any multi-index $r$ with $|r|<(m-2)$
\[
||D^ru(x,p)||_{H^2}<a
\]
\end{claim}
\begin{proof}
We proceed by induction on $|r|$. If $|r|=0$, the claim follows by lemma \ref{al2}, the fact that $s(U)$ is a precompact subset of $M_N$ and $f\in \mathcal{E}_m(\mathbb{R}^2,U,\mathbb{C}\times \mathbb{R}^2)$. Suppose that the statement holds for $|r|\le l$ where $l<(m-3)$. Suppose that $r$ is a multi-index with $|r|=l$. Then, the statement follows by the facts that $q(U)$ is a precompact subset of $M_N$ and $D^{s}(q)$ is bounded for every multi-index $s$ with $|s|\le (m-2)$, the induction hypothesis and lemma \ref{l40.4}.
\end{proof}
\begin{claim}
\label{cl1n1l1}
For any multi-index $r$ with $|r|<(m-3)$, there exists $A,\gamma>0$ such that
\[
D^ru(x,p)< Ae^{-\gamma|x|}
\]
and
\[
\partial_j D^ru(x,p)<Ae^{-\gamma|x|}
\]

for $j=1,2$.
\end{claim}
\begin{proof}
We use induction on $|r|$. Suppose that $|r|=0$. Then, since $f\in \mathcal{E}_m(\mathbb{R}^2,U,\mathbb{C}\times \mathbb{R}^2)$, according to lemma \ref{al2}, lemma \ref{l40.4} and claim \ref{cl1n1l0.9}, there exists $A_1,\gamma_1>0$ such that
\[
u(x,p)< A_1e^{-\gamma_1|x|}
\]
and
\[
\partial_j u(x,p)<A_1e^{-\gamma_1|x|}
\]

for $j=1,2$, for any $p\in \mathbb{R}^2$. Suppose that the statement holds for $|r|\le k$ where $k<(m-4)$. Let $|r|=k+1$. According to equations \eqref{eqln213} and the induction hypothesis, we have:
\begin{equation}
    \begin{split}
        &{L}_{\phi}\big[\phi(q(p)), \alpha(q(p))\big](D^ru(.,p))=g^r_{0}(p)\\
        &{L}_{j}\big[\phi(q(p)), \alpha(q(p))\big](D^ru(.,p))=g^r_{j}(p)\hspace{.4cm}j=1,2
    \end{split}
\end{equation}
where
\[
g^r_{0}(p), g^r_{j}(p)< A_2e^{-\gamma_2}x
\]
for every $p\in U$ and by claim \ref{cl1n1l0.9}, there exists a number $a>0$ such that
\[
||g_{0}^r(p)||_{H^2}, ||g_{j}^r(p)||_{H^2}< a
\]
for any $p\in U$. Suppose that
\begin{equation}
    \label{eq4l40}
    D^ru(.,p)=m_r(p)+E_r(p)
\end{equation}
where $m_r\in \big(T_{q(p)}M_N\big)^{\perp}$ and $E_r\in T_{q(p)}M_N$. By claim \ref{cl1n1l0.9}, there exist $A_3,\gamma_3>0$ such that
\[
E_r(p)(x),\partial_1 E_r(p)(x),\partial_2 E_r(p)(x)< A_3 e^{-\gamma_3|x|}
\]
for every $p\in U$. By lemmas \ref{al2} and \ref{l40.4}, there exist $A_4,\gamma_4>0$ such that for any $p\in U$
\[
m_r(p)(x), \partial_1 m_r(p)(x),\partial_2 m_r(p)(x)< A_4 e^{-\gamma_4|x|}
\]
This finishes the proof.
\end{proof}
Claim \ref{cl1n1l1} imply lemma \ref{ln4}.
\end{proof}

\end{document}